\newtheorem{theorem}{Theorem}[section]
\newtheorem{corollary}[theorem]{Corollary}
\newtheorem{lemma}[theorem]{Lemma}
\newtheorem{proposition}[theorem]{Proposition}
\theoremstyle{definition}
\newtheorem{remark}[theorem]{Remark}
\numberwithin{equation}{section}
\newcommand{\dif}{\mathop{}\!\mathrm{d}}
\newcommand{\N}{\mathbb{N}}
\newcommand{\Z}{\mathbb{Z}}
\newcommand{\R}{\mathbb{R}}
\newcommand{\C}{\mathbb{C}}
\renewcommand{\Re}{\operatorname{Re}}
\renewcommand{\Im}{\operatorname{Im}}
\newcommand{\I}{\mathrm{i}}
\newcommand{\e}{\mathrm{e}}
\newcommand{\mc}{\mathrm{c}}
\newcommand{\eps}{\varepsilon}
\newcommand{\vphi}{\varphi}
\newcommand{\MP}{\mathcal{P}}
\newcommand{\MN}{\mathcal{N}}
\newcommand{\MB}{\mathcal{B}}
\DeclareMathOperator{\Li}{Li}
\DeclareMathOperator{\supp}{supp}
\DeclareMathOperator{\res}{res}
\DeclarePairedDelimiter\abs{\lvert}{\rvert}
\begin{document}

\title[The connection between zero-free regions and the error term in the PNT]{On the connection between zero-free regions and the error term in the prime number theorem}

\author[F. Broucke]{Frederik Broucke}
\thanks{F. Broucke was supported by a postdoctoral fellowship (grant number 12ZZH23N) of the Research Foundation -- Flanders}

\address{Department of Mathematics: Analysis, Logic and Discrete Mathematics\\ Ghent University\\ Krijgslaan 281\\ 9000 Gent\\ Belgium}

\email{fabrouck.broucke@ugent.be}

\subjclass[2020]{Primary 11M26, 11N05; Secondary 11M41, 11N80}


\keywords{Zero-free region, Prime Number Theorem with remainder, Pintz's theorems on Ingham's question, Beurling generalized number systems}

\maketitle
\begin{abstract}
We provide for a wide class of zero-free regions an upper bound for the error term in the Prime Number Theorem, refining works of Pintz (1980), Johnston (2024), and R\'ev\'esz (2024). Our method does not only apply to the Riemann zeta function, but to general Beurling zeta functions. Next we construct Beurling zeta functions having infinitely many zeros on a prescribed contour, and none to the right, for a wide class of such contours. We also deduce an oscillation result for the corresponding error term in the Prime Number Theorem, showing that our aforementioned refinement is close to being sharp.
\end{abstract}

\section{Introduction}
In his 1859 paper, Riemann communicated his revolutionary insight that the distribution of prime numbers is intimately connected with the distribution of zeros of the Riemann zeta function $\zeta(s)$. His ideas led to the first proofs of the Prime Number Theorem (PNT) by Hadamard and de la Vall\'ee Poussin, which crucially make use of the fact that $\zeta(s)$ does not vanish for $\Re s \ge 1$. Later, de la Vall\'ee Poussin proved that $\zeta(s)$ does not vanish in the region $\sigma > 1 - c/\log(\abs{t}+2)$ for some $c>0$ (where we write $s=\sigma+\I t$). From this he deduced the following form of the PNT with remainder:
\[
	\psi(x) \coloneqq \sum_{n\le x}\Lambda(n) = x+ O\Bigl( x\exp\bigl(-c'\sqrt{\log x}\bigr)\Bigr), \quad \text{for some } c'>0.
\]

In his 1932 monograph \cite{In32}, Ingham posed the question of what error term would follow from a general zero-free region $\sigma > 1- \eta(\abs{t})$ for a certain function $\eta$. His initial result was greatly improved by J\'anos Pintz in 1980 \cite{Pi80}. Before stating Pintz's theorem we define, for a given function $\eta: [0,\infty) \to [0,1/2]$, the function
\[
	\omega_{\eta}(x) \coloneqq \inf_{t\ge1} \bigl(\eta(t)\log x + \log t\bigr).
\]
When $\eta$ is clear from the context, we will often omit the subscript and simply write $\omega(x)$.
\begin{theorem}[Pintz]
Suppose that $\eta: [0, \infty) \to (0,1/2]$ is continuously decreasing, and suppose that $\zeta(s)$ has no zeros in the domain $\sigma > 1- \eta(\abs{t})$. Then for every $\eps>0$,
\[
	\psi(x)-x   \ll_{\eps} x\exp\bigl(-(1-\eps)\omega(x)\bigr).
\]
\end{theorem}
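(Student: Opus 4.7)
The plan is to express $\psi(x)-x$ as a contour integral of $-\zeta'(s)/\zeta(s)$ and shift the contour leftward along the boundary $\sigma=1-\eta(|t|)$ of the zero-free region. To bypass the lossy $O(x(\log x)^{2}/T)$ truncation error of the raw Perron formula for $\psi$, I would first work with a smoothed variant: for an integer $k\geq 1$ large enough to absorb the polylog factors in $L$ below, consider
\[
    \psi_k(x) \coloneqq \frac{1}{k!}\int_0^x(x-t)^{k-1}\psi(t)\dif t = \frac{1}{2\pi\I}\int_{(c)}\biggl(-\frac{\zeta'(s)}{\zeta(s)}\biggr)\frac{x^{s+k}}{s(s+1)\cdots(s+k)}\dif s,
\]
whose integrand decays like $|s|^{-(k+1)}$, so the line integral converges absolutely. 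Shifting the contour to $\Gamma=\{1-\eta(|t|)+\I t:t\in\R\}$ (which is legitimate by the zero-free hypothesis and standard estimates for the horizontal connecting pieces) and extracting the residue at $s=1$ yields
\[
    \psi_k(x)-\frac{x^{k+1}}{(k+1)!} = \frac{1}{2\pi\I}\int_{\Gamma}\biggl(-\frac{\zeta'(s)}{\zeta(s)}\biggr)\frac{x^{s+k}}{s(s+1)\cdots(s+k)}\dif s.
\]

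Step two is to estimate this integral. A standard Borel--Carath\'eodory / Hadamard-product argument yields a bound $|\zeta'(s)/\zeta(s)|\ll L(t)\coloneqq(\log(|t|+2))^{A}\eta(|t|)^{-B}$ on a right neighbourhood of $\Gamma$, with $A,B$ absolute (possibly at the cost of shrinking $\eta$ by a fixed constant factor, a loss absorbable into $\eps$). Writing the kernel as
\[
    \frac{x^{k+1-\eta(t)}}{(1+|t|)^{k+1}} = x^{k+1}\exp\bigl(-\eta(t)\log x-(k+1)\log(1+|t|)\bigr)
\]
and invoking the defining inequality $\eta(t)\log x+\log(1+|t|)\geq\omega(x)$ for $|t|\geq 1$ extracts an $\exp(-\omega(x))$ factor pointwise, while the remaining $k$ factors of $(1+|t|)^{-1}$ combined with $L(t)$ give an outer integral that is $O_k(1)$ for $k$ sufficiently large. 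The net outcome is
\[
    \bigl|\psi_k(x)-x^{k+1}/(k+1)!\bigr|\ll_k x^{k+1}\exp(-\omega(x)).
\]

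Step three is to return from $\psi_k$ to $\psi$. Since $\psi$ and each $\psi_j$ ($j\geq 1$) are nondecreasing, the standard chain of difference-quotient inequalities $\psi_{j-1}(x)\leq h^{-1}(\psi_j(x+h)-\psi_j(x))$, together with matching lower bounds, reduces $|\psi(x)-x|$ to a quantity controlled by $|E_k|$ at nearby points. Executing this naively with $k=1$ and balancing $h\sim x\exp(-\omega(x)/2)$ produces $\psi(x)-x\ll x\exp(-\omega(x)/2)$.

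\textbf{Main obstacle.} This bound loses a factor $2$ in the exponent relative to the target $\exp(-(1-\eps)\omega(x))$, and iterating the monotonicity step only degrades matters further (each iteration takes another square root of the error). The crux of the theorem is therefore a sharper passage from $\psi_k$ back to $\psi$. A plausible route is to use a $k$-th order divided difference of $\psi_k$ whose main term reproduces $\psi(x)$ exactly (mirroring $\psi=\psi_k^{(k)}$), and to verify---by a second contour-shift argument showing additional cancellation in $E_k(x+h)-E_k(x)$---that the $k$-th divided difference of $E_k$ is of size $\sup|E_k|/h^k$ rather than $(\sup|E_k|)^{1/2^k}$; optimising $h$ and $k$ as functions of $\omega(x)$ then absorbs the loss into $\exp(\eps\omega(x))$. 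An alternative is a saddle-point analysis of the contour integral near the minimiser $t^\ast$ of $\eta(t)\log x+\log t$, showing that the integrand is concentrated in an $O(1)$ window around $t^\ast$ and that $\exp(-\omega(x))$ is therefore sharp up to a constant, after which a single smoothing $k=1$ suffices. Implementing either refinement uniformly in $\eps$, and in particular in the slow-growth regime $\omega(x)=O(\log\log x)$ where any polylog slack is fatal, is in my view the chief technical hurdle.
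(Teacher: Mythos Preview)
Your approach is essentially Ingham's, not Pintz's, and the ``main obstacle'' you identify is precisely the barrier that Ingham's contour-shifting method cannot overcome. The factor-$1/2$ loss in the exponent when passing from $\psi_k$ back to $\psi$ is intrinsic to any argument that bounds $|E_k(x)|\ll x^{k+1}\exp(-\omega(x))$ and then unsmooths via monotonicity: the balance between the unsmoothing error $O(h x^{k-1})$ and $\sup|E_k|/h$ forces $h\sim x\exp(-\omega(x)/2)$, and iterating only compounds the loss. Your proposed divided-difference fix does not help either. Taking the $k$-th finite difference of the contour-integral representation of $E_k$ cancels the factors $(s+1)\cdots(s+k)$ in the denominator (since $\Delta_h^k[x^{s+k}]\approx h^k(s+1)\cdots(s+k)x^s$) and returns you to the divergent raw Perron integral; if you instead split the $t$-range and combine the derivative bound for small $|t|$ with the trivial bound for large $|t|$, the optimal splitting point is $T_0\sim x/h$ and you recover exactly the same $\exp(-\omega(x)/2)$ balance. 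Saddle-point localisation near $t^\ast$ likewise only refines the implied constant in $|E_k|\ll x^{k+1}\exp(-\omega(x))$, not the exponent, so it does not touch the unsmoothing loss.

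Pintz's actual innovation---which the paper follows in Section~\ref{sec: refinement}---bypasses unsmoothing entirely. One starts from the truncated explicit formula
\[
\psi(x)-x=-\sum_{|\gamma|\le T}\frac{x^{\rho}}{\rho}+O\Bigl(\frac{x(\log x)^{2}}{T}\Bigr)
\]
with $T=\exp(2\omega(x))$, so the truncation error is already $\ll x\exp(-\omega(x))$. Each term obeys $|x^{\rho}/\rho|\le x\exp\bigl(-(\eta(|\gamma|)\log x+\log|\gamma|)\bigr)\le x\exp(-\omega(x))$ since $\beta\le 1-\eta(|\gamma|)$. The entire content of the theorem then reduces to controlling the \emph{number} of contributing zeros, and for this one invokes a Carlson-type zero-density estimate $N(\sigma,T)\ll T^{A(1-\sigma)}(\log T)^{C}$. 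Splitting the zero-sum into ranges of $|\gamma|$ near and far from the minimiser $\exp(u_0(x))$ and applying the density bound piecewise yields $\sum_{\rho}|x^{\rho}/\rho|\ll_{\eps}x\exp(-(1-\eps)\omega(x))$, the $\eps$ coming from the exponent $A(1-\sigma)$ in the density estimate. No contour is shifted and $\zeta'/\zeta$ is never bounded on the boundary; the zeros are handled one by one through the explicit formula rather than collectively through an integral.
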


In the same paper, Pintz also showed the following converse.
\begin{theorem}[Pintz]
Suppose that $\zeta(s)$ has infinitely many zeros in the domain $\sigma \ge 1-\eta(\abs{t})$, where $f(u) \coloneqq \eta(\e^{u})$ is such that $f'(u)$ is non-decreasing with limit $0$ and strictly increasing to $0$ if $f\to 0$. Then for every $\eps>0$,
\[
	\psi(x) - x = \Omega_{\pm}\Bigl( x\exp\bigl(-(1+\eps)\omega(x)\bigr)\Bigr).
\]
\end{theorem}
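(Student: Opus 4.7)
The plan is to argue by contradiction using Landau's oscillation theorem, treating the $\Omega_{+}$ case (the $\Omega_{-}$ case being symmetric under $(\psi - \mathrm{id}) \mapsto -(\psi - \mathrm{id})$). Suppose for contradiction that $\psi(x) - x \leq \delta\, E_{\eps}(x)$ for some $\delta > 0$ and all $x \geq x_{0}$, writing $E_{\eps}(x) := x\exp(-(1+\eps)\omega(x))$. Setting $A(x) := \delta E_{\eps}(x) - (\psi(x) - x) \geq 0$ on $[x_{0},\infty)$, for $\Re s > 1$ one has the identity
\[
\int_{x_{0}}^{\infty} A(x)\, x^{-s-1}\, dx \;=\; \delta\, G(s) \;+\; \frac{\zeta'(s)}{s\,\zeta(s)} \;+\; \frac{1}{s-1} \;+\; C(s),
\]
where $G(s) := \int_{x_{0}}^{\infty} E_{\eps}(x)\, x^{-s-1}\, dx$ and $C$ is entire. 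The combination $\zeta'/(s\zeta) + 1/(s-1)$ is analytic at $s = 1$ and, in $\Re s > 0$, has simple poles precisely at the non-trivial zeros of $\zeta$.

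The technical heart is to analytically continue $G$ to a region enclosing every zero of $\zeta$ on the contour $\sigma = 1 - \eta(\abs{t})$, and to verify that $G(\rho)$ is finite at each such zero. Writing $u = \log x$ and using $\omega(e^{u}) = \inf_{v \geq 0}(f(v)u + v)$ with $f = \eta \circ \exp$, the hypothesis that $f'$ is non-decreasing with strict limit $0$ provides, for each $u$, a unique smoothly varying critical point $v^{*}(u)$ and concavity of $\omega \circ \exp$. A saddle-point/Laplace analysis on
\[
G(s) \;=\; \int_{\log x_{0}}^{\infty} \exp\!\bigl(-(s-1)u - (1+\eps)\,\omega(e^{u})\bigr)\, du,
\]
carried out by contour deformation in the complex $u$-plane, then extends $G$ past its natural abscissa $\Re s = 1$, exhibiting $G$ as analytic on the real segment $(0,1)$ and bounded at every zero $\rho$ on the contour. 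Consequently the poles of $\zeta'/(s\zeta)$ at such zeros are not cancelled in the formula above, and its right-hand side inherits poles at every zero of $\zeta$ in the region of continuation.

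Landau's oscillation theorem then yields that the abscissa of convergence $\sigma^{*}$ of $\int_{x_{0}}^{\infty} A(x)\, x^{-s-1}\,dx$ is itself a real singularity of the analytic continuation. Since the continuation has no real singularities in $(0, 1]$ (none from $\zeta'/(s\zeta) + 1/(s-1)$, as $\zeta$ has no real zeros in $(0,1)$, and none from $G$ by the preceding step), one deduces $\sigma^{*} \leq 0$, so the continuation is holomorphic throughout $\Re s > 0$. But the hypothesis provides infinitely many zeros $\rho = \beta + \I\gamma$ with $\beta \geq 1 - \eta(\abs{\gamma}) > 0$, each a non-removable pole of the continuation in $\Re s > 0$: contradiction.

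The hardest step will be the saddle-point extension of $G$. The $(1+\eps)$ safety factor in $E_{\eps}$ (versus the $(1-\eps)$ of the direct theorem) is exactly what keeps the saddle-point integral at a contour zero convergent: with $(1-\eps)$ the saddle would sit at the wrong location and the corresponding integral would diverge, matching the threshold of the direct theorem precisely. The monotonicity hypothesis on $f'$ is indispensable for the uniqueness and smooth variation of the saddle as $s$ varies, which in turn controls the Laplace-method error uniformly across the range of $s$ needed to reach the zeros on the contour.
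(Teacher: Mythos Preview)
The paper does not itself prove this statement; it is quoted from Pintz \cite{Pi80} as background, and the Beurling generalization (Theorem~\ref{Revesz2}) is likewise only cited from \cite{Re24}. So there is no in-paper proof to compare against.

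That said, your proposal has a genuine gap. Everything rests on analytically continuing
\[
G(s)=\int_{\log x_{0}}^{\infty}\exp\bigl((1-s)u-(1+\eps)\,\omega(\e^{u})\bigr)\dif u
\]
past its abscissa of convergence, which is exactly $\sigma=1$: whenever $f\to0$ one has $\omega(\e^{u})=o(u)$, so the integral diverges for every real $\sigma<1$. You propose ``contour deformation in the complex $u$-plane'', but that requires the integrand---hence $u\mapsto\omega(\e^{u})$---to extend complex-analytically off the real axis. The hypotheses on $f$ are purely real-variable ($f'$ non-decreasing to $0$); they give no analytic extension of $\omega(\e^{u})=\inf_{v\ge0}\bigl(f(v)u+v\bigr)$, and for a generic admissible $f$ none exists. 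Without it you cannot push $\sigma^{*}$ below $1$ via Landau, and no contradiction is reached. Even in the special cases where $\omega(\e^{u})$ happens to be, say, a clean power of $u$, you still owe a proof that the continued $G$ is finite at the relevant $\rho$ and that its domain of analyticity connects $\{\Re s>1\}$ to both the real segment $(0,1]$ and a zero---none of which is sketched.

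Pintz's actual argument (and R\'ev\'esz's Beurling version) avoids any such auxiliary transform: one fixes a zero $\rho_{0}=\beta_{0}+\I\gamma_{0}$ with $\beta_{0}\ge1-\eta(\gamma_{0})$, selects $x$ so that $h(\log\gamma_{0},x)$ is essentially $\omega(x)$, and works directly with a suitably weighted explicit formula, using the convexity of $f$ to show that the single term $x^{\rho_{0}}/\rho_{0}$ dominates the remaining zero-sum. The slack $(1+\eps)$ is exactly what absorbs that remainder, not what rescues a saddle-point integral.
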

These two theorems provide a close to definitive answer to Ingham's question. Note that the Riemann hypothesis (RH) implies that $\psi(x) - x \ll_{\eps} x^{1/2+\eps}$, so that both theorems follow trivially from RH: the first because the conclusion is always true, the second because the premise is always false (at least if $\eta < 1/2$). 

A natural broader context to study Ingham's question is that of \emph{Beurling zeta functions} attached to \emph{Beurling generalized number systems}. Such zeta functions need not satisfy RH, and it is conceivable that a wide range of functions $\eta$ can occur as describing the asymptotic best zero-free region\footnote{Avoiding a precise definition, one can imagine the curve $1-\eta(t) + \I t$ as connecting the extreme points of the convex hull of the set of all zeros. Instead of using a function $\eta$ bounding a zero-free region, it is also possible to state versions of Pintz's theorems which refer to the zeros directly. For this we refer to \cite{Re24}, in particular to Section 8 regarding some subtleties for the converse Pintz theorem.}of some Beurling zeta function. In fact, it is one of the aims of this paper to show this. In contrast, the Riemann zeta function has a fixed zero distribution and the error term in the PNT has a fixed asymptotic behavior. Hence, if $\eta_{\mathrm{RZ}}$ is a function asymptotically describing this zero-distribution, the Pintz theorems for functions $\eta$ which are asymptotically larger than or smaller than $\eta_{\mathrm{RZ}}$ are trivial consequences of the one for $\eta=\eta_{\mathrm{RZ}}$.

Recently, R\'ev\'esz \cite{Re24} generalized Pintz's theorems to the Beurling context (see Theorems \ref{Revesz1} and \ref{Revesz2} below), further confirming that $1-\eta(\abs{t}) \longleftrightarrow x\e^{-\omega_{\eta}(x)}$ is the ``true connection'' between zero-free regions and error terms in the PNT.

\medskip

The aim of this paper is twofold. First, we prove a refinement of the first Pintz--R\'ev\'esz theorem replacing $\eps$ by a function tending to $0$. We note that in the case of the Riemann zeta function, such a refinement was recently obtained by Johnston \cite{Jo24}. Our refinement is slightly better and applies to general Beurling zeta functions, while Johnston's result applies to a wider class of zero-free regions. Nonetheless, our theorem is sufficient to capture any ``reasonable'' choice of $\eta$.

Second, we show that our obtained refinement is sharp, up to the value of the constant of the secondary term $\eps(x)\omega(x)$ in the exponential. We do this by constructing for every ``reasonable'' choice of $\eta$, a Beurling number system $(\MP, \MN)$ whose zeta function $\zeta_{\MP}(s)$ has infinitely many zeros on the contour $\sigma = 1-\eta(\abs{t})$ and none to the right, and for which we also obtain an oscillation result for $\psi_{\MP}(x)-x$, matching the error term predicted by the refined Pintz--R\'ev\'esz theorem (up to the value of the aforementioned constant).

\subsection{Beurling generalized number systems}
Beurling generalized number systems were introduced in \cite{Beurling} in order to investigate the minimal assumptions needed to prove the PNT. They form an abstraction of the multiplicative structure of the primes and integers, and typically lack any additive structure. 

A Beurling generalized number system is a pair of sequences $(\MP, \MN)$ where $\MP$, the sequence of \emph{generalized primes}, consists of real numbers $p_{k}$ satisfying $1<p_{1} \le p_{2} \le \dotso$ and $p_{k}\to\infty$. The sequence of \emph{generalized primes} $\MN$ consists of all possible products of generalized primes $n_{j} = p_{1}^{\nu_{1}}\dotsm p_{k}^{\nu_{k}}$ (including the empty product $n_{0} = 1$) ordered in a non-decreasing fashion. We allow repeated elements in both sequences; if a generalized integer has multiple factorizations into generalized primes, its multiplicity in the sequences $\MN$ equals the number of such distinct factorizations. It can be useful to think of these as different generalized integers with the same numerical value. (Alternatively, one may define a generalized number system as an abstract multiplicative semigroup with a multiplicative norm function.) 

Given a Beurling number system $(\MP, \MN)$, we set for $x\ge1$
\[
	\pi_{\MP}(x) \coloneqq \#\{p_{k} \le x\}, \quad N_{\MP}(x) \coloneqq \#\{n_{j}\le x\}.
\]
Note that $\pi_{\MP}(1)=0$ and $N_{\MP}(1)=1$. In this article, we will assume that
\begin{equation}
\label{theta-well-behaved}
	N_{\MP}(x) = Ax + O(x^{\theta}),
\end{equation}
for some $A>0$ and $\theta \in [0, 1)$. This assumption is referred to as the integers being ``$\theta$-well-behaved''; sometimes it is also called Axiom A. It guarantees that the associated \emph{Beurling zeta function}, defined as
\[
	\zeta_{\MP}(s) \coloneqq \sum_{j\ge0}\frac{1}{n_{j}^{s}} = \prod_{k\ge1}\frac{1}{1-p_{k}^{-s}}
\]
and initially holomorphic on $\Re s > 1$ due to the absolute convergence, is such that $\zeta_{\MP}(s) - \frac{A}{s-1}$ admits analytic continuation to the half-plane $\Re s > \theta$. Finally we define the weighted prime-counting functions
\[
	\Pi_{\MP}(x) \coloneqq \sum_{p_{k}^{\nu}\le x}\frac{1}{\nu}, \quad \psi_{\MP}(x) \coloneqq \sum_{p_{k}^{\nu}\le x}\log p_{k} = \int_{1}^{x}\log u\dif\Pi_{\MP}(u),
\]
which have Mellin--Stieltjes transform
\[
	\int_{1}^{\infty}x^{-s}\dif\Pi_{\MP}(x) = \log\zeta_{\MP}(s), \quad \int_{1}^{\infty}x^{-s}\dif\psi_{\MP}(x) = -\frac{\zeta_{\MP}'(s)}{\zeta_{\MP}(s)}.
\]
More background on Beurling generalized number systems can be found in the book \cite{DZ16}.

We mention that \eqref{theta-well-behaved} implies the existence of an absolute constant $c>0$ such that $\zeta_{\MP}(s)$ has no zeros in the region 
\begin{equation}
\label{dlVP region}
	\sigma > 1 - \frac{c(1-\theta)}{\log\abs{t}}, \quad \abs{t} \ge t_{0}(\MP),
\end{equation}
and consequently
\begin{equation}
\label{dlVP error term}
	\psi_{\MP}(x) - x \ll_{\eps} x\exp\bigl(-(2-\eps)\sqrt{c(1-\theta)\log x}\bigr).
\end{equation}
This follows from Landau's method \cite{La24} (see also Theorem \ref{Hauptsatz} below) and the bound
\begin{equation}
\label{convexity bound}
	\zeta_{\MP}(s) \ll \frac{\abs{t}^{\frac{1-\sigma}{1-\theta}}\log\abs{t}}{\sigma-\theta}, \quad \theta < \sigma \le 1, \quad \abs{t} \ge \e
\end{equation}
in the critical strip (see e.g.\ \cite[Lemma 2.6]{Re21}).
\medskip

In \cite{Re24}, R\'ev\'esz generalized Pintz's theorems to Beurling generalized number systems\footnote{Strictly speaking, Theorem \ref{Revesz1} does not account for a possible zeros $\rho_{0} = \beta_{0}+\I\gamma_{0}$ with $\abs{\gamma_{0}}<1$ and $1-\beta_{0} < \liminf\eta(t)$, as in the definition of $\omega$ we take the infimum only over $t\ge1$. In this paper we assume that $\eta\to0$ and so potential low-lying zeros do not affect the results.}.
\begin{theorem}[R\'ev\'esz]
\label{Revesz1}
Let $(\MP, \MN)$ be a Beurling number system satisfying \eqref{theta-well-behaved}, let $\eta: [0, \infty) \to (0, 1-\theta]$ be an arbitrary real function, and suppose that $\zeta_{\MP}(s)$ has no zeros in the domain $\sigma > 1-\eta(\abs{t})$. Then for every $\eps>0$,
\[
	\psi_{\MP}(x) - x \ll_{\eps} x\exp\bigl(-(1-\eps)\omega(x)\bigr).
\]	 
\end{theorem}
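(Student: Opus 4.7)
The plan is to apply Landau's contour-shifting method to the Beurling zeta function, with the contour optimized according to the quantity $\omega(x)$. Fix $\eps>0$ and $x$ large, and choose $T = T(x) \ge 1$ that nearly attains the infimum defining $\omega(x)$, so that $\eta(T)\log x + \log T \le \omega(x) + 1$. By the defining property of the infimum, $\eta(t)\log x + \log t \ge \omega(x)$ for every $t \ge 1$, which yields $\eta(t) \ge \eta(T) - 1/\log x$ for $t \in [1,T]$. Hence, for a small $\delta \in (0,\eps)$ to be fixed below, the vertical segment $\sigma_{0}+\I t$, $\abs{t}\le T$, with $\sigma_{0} \coloneqq 1-(1-\delta)\eta(T)$, lies strictly inside the hypothesized zero-free region once $x$ is large enough.

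Starting from a truncated Perron-type formula
\[
	\psi_{\MP}(x) - x = -\frac{1}{2\pi\I}\int_{1+1/\log x-\I T}^{1+1/\log x+\I T}\frac{\zeta_{\MP}'(s)}{\zeta_{\MP}(s)}\frac{x^{s}}{s}\dif s - x + O\Bigl(\frac{x\log^{2}x}{T}\Bigr),
\]
I would shift the integration contour to the left boundary $\Re s = \sigma_{0}$, $\abs{\Im s}\le T$, connected to the original line by horizontal segments at $\Im s = \pm T$. The only singularity crossed is the simple pole of $-\zeta_{\MP}'/\zeta_{\MP}$ at $s=1$, whose residue equals $x$ and therefore cancels the $-x$ term.

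The central technical step is a pointwise estimate for $\zeta_{\MP}'/\zeta_{\MP}$ on the shifted contour. Combining the convexity bound \eqref{convexity bound} with the zero-free hypothesis through a Borel--Carath\'eodory-type argument (applied in disks of radius comparable to $\delta\eta(T)$ centered on points of the contour, in the spirit of Landau's classical lemma), one obtains
\[
	\abs{\zeta_{\MP}'(s)/\zeta_{\MP}(s)} \ll \frac{\log(\abs{\Im s}+2)}{\delta\,\eta(T)}
\]
on the shifted contour. The dominant contribution to the shifted integral comes from the left vertical side and is bounded by
\[
	\frac{\log^{2}T}{\delta\,\eta(T)}\cdot\frac{x^{\sigma_{0}}}{T} \ll \frac{\log^{2}T}{\delta\,\eta(T)}\cdot x\exp\bigl(-(1-\delta)\omega(x)\bigr),
\]
using $\sigma_{0} = 1-(1-\delta)\eta(T)$ together with $\eta(T)\log x + \log T \le \omega(x)+1$. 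The horizontal segments and the Perron truncation error $x\log^{2}x/T$ are controlled similarly after choosing $T$ to be a modest power of $x$ (note the conclusion is vacuous unless $\omega(x)\to\infty$, so we may assume this).

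The main obstacle is the bookkeeping required to absorb the polynomial prefactor $\log^{2}T/(\delta\eta(T))$ into the exponential gap $\exp((\eps-\delta)\omega(x))$ opened by taking $\delta<\eps$. Since $T$ may be chosen with $\log T \ll \omega(x)/\eta(1)$ and $\eta(T)\gg (\omega(x)-\log T)/\log x$, this prefactor is at most polynomial in $\omega(x)$ and $\log x$, hence absorbed as soon as $\omega(x)\to\infty$. Taking $\delta = \eps/2$ then yields the claimed bound $\psi_{\MP}(x)-x \ll_{\eps} x\exp(-(1-\eps)\omega(x))$.
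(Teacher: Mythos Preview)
Your vertical-line contour argument has a computational slip that masks a fundamental obstruction. The integral along the left vertical side $\Re s=\sigma_{0}$, $\abs{t}\le T$ is bounded by
\[
	\int_{-T}^{T}\abs[\bigg]{\frac{\zeta_{\MP}'}{\zeta_{\MP}}(\sigma_{0}+\I t)}\,\frac{x^{\sigma_{0}}}{\abs{\sigma_{0}+\I t}}\dif t
	\ll \frac{x^{\sigma_{0}}}{\delta\,\eta(T)}\int_{-T}^{T}\frac{\log(\abs{t}+2)}{\abs{t}+1}\dif t
	\ll \frac{\log^{2}T}{\delta\,\eta(T)}\,x^{\sigma_{0}},
\]
with no factor $1/T$. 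You cannot optimise this away: since $T$ is a near-minimiser, $\eta(T)\log x\le \omega(x)+1-\log T$, so
\[
	x^{\sigma_{0}} = x\exp\bigl(-(1-\delta)\eta(T)\log x\bigr) \ge x\,T^{\,1-\delta}\exp\bigl(-(1-\delta)\omega(x)-1\bigr).
\]
In all the standard situations $\log T\asymp\omega(x)$ (for instance $\eta(t)=c/\log t$ gives $\log T\sim\omega(x)/2$), so the vertical integral alone is of size $x\exp\bigl(-c'\omega(x)\bigr)$ with some $c'<1$. Meanwhile the horizontal and Perron errors contribute $x/T$, and $\eta(T)\log x+\log T\approx\omega(x)$ makes it impossible to have simultaneously $(1-\delta)\eta(T)\log x\ge(1-\eps)\omega(x)$ and $\log T\ge(1-\eps)\omega(x)$. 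This is precisely Ingham's barrier: a single vertical segment optimises the saving only at one height and wastes the zero-free region elsewhere.

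The paper (and R\'ev\'esz, following Pintz) avoids contour shifting altogether. It uses the explicit formula \eqref{explicit Riemann--von Mangoldt} with $b=1-\eps$ and $T=\e^{2\omega(x)}$, so that each zero $\rho=\beta+\I\gamma$ contributes $\abs{x^{\rho}/\rho}\le x\exp\bigl(-(\eta(\abs{\gamma})\log x+\log\abs{\gamma})\bigr)\le x\e^{-\omega(x)}$ individually, and then controls the \emph{number} of contributing zeros via a zero-density estimate such as \eqref{zero-density}. That is the missing idea: pointwise each term already carries the full exponent $\omega(x)$, and the remaining loss comes only from counting zeros, which is subexponential in $\omega(x)$.
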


\begin{theorem}[R\'ev\'esz]
\label{Revesz2}
Let $(\MP, \MN)$ be a Beurling number system satisfying \eqref{theta-well-behaved} and let $\eta: [0, \infty) \to (0, 1-\theta]$ be a function such that $f(u)\coloneqq \eta(\e^{u})$ is convex. Assume that $\zeta_{\MP}(s)$ has infinitely many zeros in the domain $\sigma \ge 1-\eta(\abs{t})$. Then for every $\eps>0$, 
\[
	\psi_{\MP}(x) - x = \Omega_{\pm}\Bigl( x\exp\bigl(-(1+\eps)\omega(x)\bigr)\Bigr).
\]
\end{theorem}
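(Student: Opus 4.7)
The plan is to argue by contradiction via the Landau--Pintz kernel method, in the spirit of Pintz's 1980 proof and R\'ev\'esz's Beurling adaptation of it. I sketch only the $\Omega_{+}$-part of the conclusion; the $\Omega_{-}$-part is entirely analogous. Suppose that
\[
    \limsup_{x\to\infty}\frac{\psi_\MP(x)-x}{x\e^{-(1+\eps)\omega(x)}} \le 0,
\]
so that for every $\delta>0$ there is $x_{0}=x_{0}(\delta)$ with $\psi_\MP(y)-y \le \delta y\e^{-(1+\eps)\omega(y)}$ for $y \ge x_{0}$. Then $F(y) \coloneqq \delta y\e^{-(1+\eps)\omega(y)} - (\psi_\MP(y)-y)$ is nonnegative on $[x_{0},\infty)$, and so is the Mellin convolution
\[
    J(x) \coloneqq \int_{x_{0}}^{\infty} F(y)\,k(x/y)\,\frac{\dif y}{y}
\]
for any nonnegative weight $k$ of compact support in a neighbourhood of $1$.

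Mellin inversion converts this positivity into an analytic identity. Writing $K(s)$ for the Mellin transform of $\e^{-(1+\eps)\omega(y)}$ and $\hat k$ for that of $k$, one has, for $c>1$,
\[
    J(x) = \frac{1}{2\pi\I}\int_{(c)}\biggl\{\delta K(s) + \frac{\zeta_\MP'(s)}{s\,\zeta_\MP(s)} + \frac{1}{s-1}\biggr\}\hat k(s)\,x^{s}\dif s.
\]
Shifting the contour leftwards past $\Re s = 1$, the pole at $s=1$ is cancelled by $1/(s-1)$, while each zero $\rho=\beta+\I\gamma$ of $\zeta_\MP$ encountered contributes a residue $\hat k(\rho)x^{\rho}/\rho$ (times multiplicity). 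The goal is to arrange matters so that the residue at a \emph{single} judiciously chosen zero overwhelms all the remaining contributions.

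The construction of $k$ is the key step. Given a zero $\rho_{n}=\beta_{n}+\I\gamma_{n}$ of $\zeta_\MP$ with $\beta_{n}\ge 1-\eta(\abs{\gamma_{n}})$ and $\abs{\gamma_{n}}\to\infty$ (available by hypothesis), I would take a real nonnegative Fej\'er- or Beurling--Selberg-type $k_{n}$ whose Mellin transform $\hat k_{n}$ is sharply concentrated around the frequencies $\pm\gamma_{n}$, normalised so that $\hat k_{n}(\rho_{n})\asymp 1$. Convexity of $f(u)=\eta(\e^{u})$ ensures that the infimum defining $\omega(x)$ is attained at a unique minimiser depending continuously on the parameters, and so there is a unique $x_{n}$ with $\omega(x_{n})=\eta(\abs{\gamma_{n}})\log x_{n}+\log\abs{\gamma_{n}}$; for such $x_{n}$ a direct computation gives $x_{n}^{\beta_{n}}/\abs{\gamma_{n}} \asymp x_{n}\e^{-\omega(x_{n})}$. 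After a small multiplicative adjustment of $x_{n}$ to align the oscillatory phase, the combined residue at $\rho_{n}$ and $\bar\rho_{n}$ becomes a definite negative quantity of size $\asymp x_{n}\e^{-\omega(x_{n})}$.

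It then remains to show that this main term dominates both the $\delta K$-term and the shifted contour integral. The $\delta K$-term is trivially of order $\delta x_{n}\e^{-(1+\eps)\omega(x_{n})} = o(x_{n}\e^{-\omega(x_{n})})$ once $\delta$ is chosen small. The contour integral is bounded using \eqref{convexity bound} combined with a Borel--Carath\'eodory estimate for $\zeta_\MP'/\zeta_\MP$, together with the standard zero-density bound in the strip $1-2\eta(\abs t)\le\sigma\le 1$ obtained from \eqref{theta-well-behaved} via Jensen's formula. The main obstacle is the calibration of the kernel: its frequency concentration around $\pm\gamma_{n}$ must be sharp enough that the contribution of all other zeros at nearby heights is controlled, yet its bandwidth must be large enough that $\hat k_{n}(\rho_{n})\asymp 1$ is preserved. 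Striking this balance hinges on the convexity hypothesis, which furnishes the uniqueness and continuous dependence needed to match the kernel parameters, the test point $x_{n}$, and the zero $\rho_{n}$ in tandem.
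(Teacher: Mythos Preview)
The paper does not contain a proof of this statement: Theorem~\ref{Revesz2} is quoted from R\'ev\'esz's paper \cite{Re24} as background, and the present paper only \emph{uses} it (and later shows, in Proposition~\ref{sharpness Revesz2}, that $\eps$ cannot be taken to be~$0$). So there is no in-paper proof to compare against; one can only assess your sketch against the Pintz--R\'ev\'esz method as it exists in the literature.

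Your overall strategy---assume a one-sided failure, convolve with a nonnegative compactly supported kernel, and produce a contradiction by picking out the residue at a single well-placed zero---is indeed the Pintz method and is the right template. Two points deserve attention, though. First, the way you have written the Mellin identity is misleading: the term $K(s)$, the Mellin transform of $\e^{-(1+\eps)\omega(y)}$ (or of $y\e^{-(1+\eps)\omega(y)}$), is only defined for $\Re s>1$ and has no reason to continue analytically to the left, since $\omega(y)=o(\log y)$. You therefore cannot shift the contour for the $\delta K$-piece. The correct procedure is to bound $\int \delta y\e^{-(1+\eps)\omega(y)}k(x/y)\,\dif y/y$ \emph{directly} from the support and nonnegativity of $k$ and the slow variation of $\omega$ (cf.\ \eqref{omega slow variation}), and to shift the contour only for the $-\zeta_{\MP}'/\zeta_{\MP}$ part. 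Your later sentence ``the $\delta K$-term is trivially of order $\delta x_{n}\e^{-(1+\eps)\omega(x_{n})}$'' suggests you have this in mind, but the displayed identity as written does not reflect it.

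Second, convexity of $f$ alone does not give a unique minimiser of $u\mapsto f(u)\log x+u$, nor differentiability of $f$; for a clean formulation one passes to the lower convex envelope or assumes strict convexity and $C^{1}$ (as the paper does in Section~2). This does not affect the $\Omega$-conclusion---one can always pick \emph{some} minimiser and \emph{some} $x_{n}$ with $u_{0}(x_{n})=\log\abs{\gamma_{n}}$---but your appeal to ``uniqueness and continuous dependence'' as the crux of the kernel calibration overstates what convexity buys. In Pintz's and R\'ev\'esz's arguments the real work in isolating a single zero is done by choosing a high power of a Fej\'er-type kernel (so that $\hat k$ decays rapidly away from $\pm\gamma_{n}$) together with the $O(\log t)$ local zero count coming from \eqref{theta-well-behaved}; convexity enters only to relate $h(\log\abs{\gamma_{n}},x_{n})$ to $\omega(x_{n})$.
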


\subsection{Statement of the results}
Our first result is a sharpening of the first Pintz--R\'ev\'esz theorem (Theorem \ref{Revesz1}) in the style of Johnston. The proof of his Theorem 2.1 in \cite{Jo24} can be readily adapted to the Beurling setting, and, for our class of functions $\eta$, can be slightly refined\footnote{We refer to Remark \ref{comparison Johnston} below for a comparison with Johnston's result.} still.

Let $\eta: [1, \infty) \to (0, 1-\theta)$ be a function converging to $0$, and write $f(u) = \eta(\e^{u})$. 
In this article, we will always assume that $f$ is eventually $C^{1}$, decreasing, and strictly convex. 
This ensures that for sufficiently large fixed $x$, the function 
\begin{equation}
\label{definition h}
	h(u, x) \coloneqq f(u)\log x + u
\end{equation}
has a minimum attained at a unique point which we denote by $u_{0}(x)$. Note that
\begin{equation}
\label{omega h}
	\omega_{\eta}(x) = \omega(x) = \inf_{u\ge0} h(u,x) = f\bigl(u_{0}(x)\bigr)\log x + u_{0}(x).
\end{equation}
Furthermore, we assume that $f$ is either regularly varying of index $-\alpha$ for some $\alpha \in [0, 1)$, meaning that
\begin{equation}
\label{regular variation}
	\frac{f(\lambda u)}{f(u)} \to \lambda^{-\alpha} \quad \text{as } u \to \infty, \quad \forall \lambda > 0,
\end{equation}
or that $f$ is slowly varying, meaning that
\begin{equation}
\label{slow variation}
	\frac{f(\lambda u)}{f(u)} \to 1\quad \text{as } u \to \infty, \quad \forall \lambda > 0.
\end{equation}
The prototypical examples we have in mind are\footnote{Strictly speaking we should use these definitions only for $u\ge U$ with $U$ sufficiently large so that $f$ is well-defined and $< 1-\theta$, and set e.g.\ $f(u) \coloneqq f(U)$ for $1\le u < U$.} $f(u) = u^{-\alpha}$ and $f(u) = 1/\log u$, for which
\begin{align*}
	u_{0}(x) 	&= (\alpha \log x)^{\frac{1}{1+\alpha}} \quad &&\omega(x) = (1+\alpha)\alpha^{-\frac{\alpha}{1+\alpha}}(\log x)^{\frac{1}{1+\alpha}} , \\
	u_{0}(x) 	&= \frac{\log x}{(\log_{2} x)^{2}}\biggl\{1 + O\biggl(\frac{\log_{3}x}{\log_{2}x}\biggr)\biggr\}
	 \quad  &&\omega(x) = \frac{\log x}{\log_{2} x}\biggl\{1 + O\biggl(\frac{\log_{3}x}{\log_{2}x}\biggr)\biggr\},
\end{align*}
respectively.
All zero-free regions one typically has in mind are either regularly or slowly varying. For example, the function $f$ corresponding to the current asymptotic best zero-free region for the Riemann zeta function, \eqref{Riemann zero-free} below, is regularly varying of index $-2/3$. 

In the case of slow variation, we also require the following technical conditions: 
\begin{gather}
	\frac{1}{f(u)} \log \frac{1}{f(u)} = o(u),  \label{decrease f} \\
	\forall \delta > 0: \limsup_{u\to\infty}\frac{\abs{f'((1+\delta)u)}}{\abs{f'(u)}} < 1. \label{decrease |f'|} 
\end{gather}
These are again fulfilled in most concrete cases. Indeed, when $f$ is slowly varying, $f'$ is often even regularly varying of index $-1$ (as is the case with e.g.\ $f(u) = 1/\log u$), so that the limit above is $1/(1+\delta)$. 
\begin{theorem}
\label{explicit eps}
Let $(\MP, \MN)$ be a Beurling number system satisfying \eqref{theta-well-behaved} and let $f$ be eventually $C^{1}$, decreasing, and strictly convex. Suppose further that $f$ is regularly varying of index $-\alpha$, $0<\alpha\le 1$, or that it is slowly varying and satisfies \eqref{decrease f} and \eqref{decrease |f'|}.
If $\zeta_{\MP}(s)$ has no zeros in the domain $\sigma > 1-f(\log \abs{t})$, then we have for every $\delta>0$,
\[
	\psi_{\MP}(x) - x \ll_{\delta} x\exp\Bigl(-\omega(x) + \bigl(\tfrac{4}{1-\theta}+\delta\bigr)f\bigl(u_{0}(x)\bigr)u_{0}(x)\Bigr)\omega(x)^{9}.
\]
\end{theorem}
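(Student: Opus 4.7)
The proof will follow the Landau--Pintz contour integration method, adapted to the Beurling setting as in R\'ev\'esz and with the explicit constant-tracking of Johnston. Invoking a Landau-style Hauptsatz (presumably Theorem \ref{Hauptsatz} in the paper), we reduce the estimation of $\psi_\MP(x) - x$ to controlling a contour integral
\[
	-\frac{1}{2\pi\I}\int_{\mathcal{C}}\frac{\zeta_\MP'(s)}{\zeta_\MP(s)}\frac{x^s}{s}\dif s,
\]
possibly in a smoothed or integrated form, along a contour $\mathcal{C}$ lying just inside the zero-free region, plus a truncation error of size $\ll x\omega(x)^{a}/T$ where $T$ is the height of $\mathcal{C}$. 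The natural choice of $\mathcal{C}$ is $\sigma = 1 - f(\log\abs{t}) + r(\abs{t})$ for $1 \le \abs{t} \le T$, with $r(\abs{t})$ a small positive margin inward from the zero-free boundary, completed by horizontal segments and a far-right vertical line.

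The critical analytic input is an upper bound for $\abs{\zeta_\MP'/\zeta_\MP(s)}$ on $\mathcal{C}$. For this I would apply the Borel--Carath\'eodory inequality to $\log\zeta_\MP$ on a disk centered at a point $s_0$ just to the right of the critical line at ordinate $t$, with radius $R$ chosen so that the disk just touches the zero-free boundary. The convexity bound \eqref{convexity bound} then gives
\[
	\max_{\abs{z - s_0} = R}\log\abs{\zeta_\MP(z)} \le \frac{f(\log\abs{t})}{1-\theta}\log\abs{t} + O(\log\log\abs{t}).
\]
Optimising the various free parameters, in particular the ratio between the disk radius $R$ and the margin $r(\abs{t})$, produces an estimate of shape
\[
	\left|\frac{\zeta_\MP'(s)}{\zeta_\MP(s)}\right| \ll \frac{4\log\abs{t}}{1-\theta}\cdot\frac{f(\log\abs{t})}{r(\abs{t})}\bigl(1 + o(1)\bigr),
\]
with the constant $4$ arising from the factor $2R/(R - \abs{s - s_0})^{2}$ in Borel--Carath\'eodory at the optimal calibration.

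Inserting this bound, the contour integral is dominated by
\[
	\frac{4\,x}{1-\theta}\int_{0}^{\log T}\frac{u\, f(u)}{r(\e^u)}\exp\bigl(-(f(u) - r(\e^u))\log x\bigr)\dif u,
\]
and by the strict convexity of $f$, the integrand concentrates around the unique saddle $u = u_{0}(x)$. A Laplace-type analysis at $u_0$---valid in the regularly varying case by classical estimates, and in the slowly varying case thanks to hypotheses \eqref{decrease f} and \eqref{decrease |f'|}---combined with the near-optimal choice of margin $r \approx (4/(1-\theta)+\delta)f(u_{0})u_{0}/\log x$, which balances the exponent loss $r\log x$ against the Borel--Carath\'eodory gain $1/r$, yields the claimed bound
$x \exp\bigl(-\omega(x) + (\tfrac{4}{1-\theta}+\delta)f(u_{0})u_{0}\bigr)$. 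The polynomial factor $\omega(x)^{9}$ absorbs the accumulated sub-exponential contributions from the Gaussian width $\sim 1/\sqrt{f''(u_{0})\log x}$, the logarithmic losses in the convexity bound, and the horizontal and tail segments of $\mathcal{C}$.

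\textbf{Main obstacle.} The principal difficulty is to extract the sharp constant $4/(1-\theta)$ in the secondary term rather than a cruder $O(1/(1-\theta))$ multiple of $f(u_{0})u_{0}$; this requires the near-optimal calibration of $R$ relative to $r(\abs{t})$ in Borel--Carath\'eodory and careful tracking of lower-order terms in \eqref{convexity bound}. A secondary technicality is justifying the Laplace expansion around $u_{0}$ in the slowly varying regime: hypothesis \eqref{decrease f} ensures that $u_{0}(x)$ is large enough so that $f(u_{0})u_{0} = o(\omega(x))$ and the refinement is genuine, while \eqref{decrease |f'|} guarantees that $u_{0}$ is a sufficiently isolated minimum so that the saddle-point approximation is valid and the Gaussian width is well-behaved.
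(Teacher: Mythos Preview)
Your approach is fundamentally different from the paper's and, as described, cannot reach the stated bound. The paper does \emph{not} use contour integration along the zero-free boundary together with a Borel--Carath\'eodory estimate for $\zeta_\MP'/\zeta_\MP$. Instead it argues via the explicit Riemann--von Mangoldt formula \eqref{explicit Riemann--von Mangoldt}, writing $\psi_\MP(x)-x$ as a sum $-\sum_{\rho}x^{\rho}/\rho$ over zeros, and then controls this sum using the zero-density estimate \eqref{zero-density}. The constant $\tfrac{4}{1-\theta}$ is precisely the exponent $A$ in that zero-density bound $N(\sigma,T)\ll T^{\frac{4(1-\sigma)}{1-\theta}}(\log T)^{9}$, and the factor $\omega(x)^{9}$ is the $(\log T)^{9}$ evaluated at $T=\e^{2\omega(x)}$. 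Neither constant has anything to do with Borel--Carath\'eodory calibration; that the number $4$ appears in both contexts is a coincidence. Theorem~\ref{Hauptsatz}, which you invoke, is Landau's result for deducing zero-free regions from growth bounds and plays no role whatsoever in the proof of Theorem~\ref{explicit eps}.

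There is also a concrete analytic error in your saddle-point claim. Your displayed integral has exponent $-(f(u)-r)\log x$, which is \emph{increasing} in $u$ since $f$ is decreasing; after the substitution $u=\log t$ the factor $dt/t$ becomes $du$ and contributes no $\e^{-u}$. Hence the integrand does not concentrate at $u_{0}(x)$ but is dominated by the truncation height $\log T$, and balancing against the Perron error $x/T$ recovers only an Ingham-type bound $x\exp(-c\,\omega(x))$ with $c<1$. This is exactly the barrier Pintz overcame by passing to the explicit formula: there each individual term $|x^{\rho}/\rho|\le x\e^{-h(\log\gamma,x)}$ already carries the full $\e^{-u}$ weight, so every term is $\le x\e^{-\omega(x)}$, and the secondary loss $\exp\bigl(\tfrac{4}{1-\theta}f(u_{0})u_{0}\bigr)$ arises solely from \emph{counting} how many zeros lie in the critical window $\gamma\approx \e^{u_{0}}$, $\beta\ge 1-(1+\delta')f(u_{0})$ via the zero-density theorem. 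Your proposed margin optimisation cannot substitute for this counting input.
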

For $f(u)=u^{-\alpha}$ and $f(u)=1/\log u$ we obtain
\[
	f\bigl(u_{0}(x)\bigr)u_{0}(x) = (\alpha \log x)^{\frac{1-\alpha}{1+\alpha}} \quad \text{and} \quad f\bigl(u_{0}(x)\bigr)u_{0}(x) = \frac{\log x}{(\log_{2}x)^{3}}\biggl\{1 + O\biggl(\frac{\log_{3}x}{\log_{2}x}\biggr)\biggr\},
\]
respectively. We will later see that always $u_{0}(x) = o(\log x)$, so that $f\bigl(u_{0}(x)\bigr)u_{0}(x) = o\bigl(\omega(x)\bigr)$ in view of \eqref{omega h}. Also, usually $\log\omega(x) = o\bigl(f\bigl(u_{0}(x)\bigr)u_{0}(x)\bigr)$ so that the factor $\omega(x)^{9}$ can be absorbed in $\delta$.

\medskip

Our second result says that the above theorem is close to being sharp.
\begin{theorem}
\label{sharpness theorem}
Let $f$ be an eventually $C^{1}$, decreasing, and strictly convex function satisfying $1/u = o(f(u))$ as $u\to\infty$. Suppose further that $f$ is regularly varying of index $-\alpha$, $0<\alpha\le 1$, or that it is slowly varying and satisfies \eqref{decrease |f'|}. Then there exists a Beurling number system $(\MP, \MN)$ satisfying
\begin{enumerate}
	\item $N_{\MP}(x) = Ax + O_{\eps}(x^{1/2+\eps})$ for some $A>0$ and every $\eps>0$;
	\item for every $\delta>0$,
	\[ 
		\psi_{\MP}(x)-x = \Omega_{\pm}\Bigl(x\exp\bigl(-\omega(x) + (1-\delta)f\bigl(u_{0}(x)\bigr)u_{0}(x)\bigr)\Bigr); 
	\]
	\item $\zeta_{\MP}(s)$ has infinitely many zeros on the contour $\sigma = 1- f(\log\abs{t})$, and none to the right of this contour.	
\end{enumerate}
\end{theorem}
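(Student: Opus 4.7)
The plan is to reverse-engineer a Beurling system from a meromorphic blueprint $Z(s)$ carrying the prescribed zero structure, and to secure the oscillation via a deliberate conspiracy of zeros. First I would fix a sequence of target zeros $\{\rho_k=\beta_k+\I\gamma_k,\bar\rho_k\}_{k\ge1}$ with $\beta_k=1-f(\log\gamma_k)$ lying on the contour, the imaginary parts placed with carefully tuned density: in every window $\log\gamma_k\in[u,u+1]$ with $u$ large I would require $\asymp\exp((1-\delta/2)f(u)u)$ zeros. This is the minimum density needed to produce the claimed oscillation amplitude, and it remains compatible with the Beurling analogue of the zero-counting bound coming from $N_\MP(x)=Ax+O(x^{1/2+\eps})$. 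Using these zeros I would construct a meromorphic $Z(s)$ on $\Re s>1/2$ with a simple pole at $s=1$, simple zeros exactly at the prescribed $\rho_k,\bar\rho_k$, no other zeros to the right of the contour, and polynomial growth in every closed half-plane $\Re s\ge 1/2+\eps$. One concrete recipe is to prescribe
\[
	\Pi(x)=\Li(x)+\sum_k\operatorname{Re}\biggl(\frac{-x^{\rho_k}}{\rho_k\log x}\biggr)+\text{correction},
\]
and define $Z$ by $\log Z(s)=\int_1^\infty x^{-s}\dif\Pi(x)$; the growth hypotheses on $f$, in particular $1/u=o(f(u))$ and \eqref{decrease |f'|}, ensure convergence and good analytic behaviour.

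\textbf{From $Z$ to primes.} I would then apply a Diamond--Montgomery--Vorhauer type rounding procedure, in the refined deterministic form developed in recent Beurling literature, to produce a multiset $\MP=\{p_k\}$ of generalised primes with $\pi_\MP(x)=\Pi(x)+O(\sqrt{x}\log x)$. Square-root cancellation in the rounding error delivers $N_\MP(x)=Ax+O_\eps(x^{1/2+\eps})$, establishing (1). A routine computation shows that $\zeta_\MP(s)$ and $Z(s)$ differ only by a factor $\e^{E(s)}$ with $E$ holomorphic and bounded on $\Re s>1/2$, so their zero sets there coincide, giving (3).

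\textbf{Oscillation and main obstacle.} For (2) I would use the Beurling analogue of the Riemann--von~Mangoldt explicit formula
\[
	\psi_\MP(x)-x = -\sum_k\biggl(\frac{x^{\rho_k}}{\rho_k}+\frac{x^{\bar\rho_k}}{\bar\rho_k}\biggr)+\text{smoother terms},
\]
whose leading contribution is the cosine sum $\sum_k 2x^{\beta_k}\abs{\rho_k}^{-1}\cos(\gamma_k\log x-\arg\rho_k)$. Restricting to zeros $\rho_k$ with $\log\gamma_k$ in a window of appropriate width around $u_0(x)$, every surviving term has size $\asymp x\e^{-\omega(x)}$ by the definition of $u_0$, and by construction there are $\asymp\exp((1-\delta)f(u_0)u_0)$ such zeros. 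A Dirichlet/Kronecker simultaneous-approximation argument applied to these $\gamma_k$ produces sequences $x_n^{\pm}\to\infty$ at which every relevant cosine has the desired sign and magnitude $\ge 1/2$, yielding the claimed $\Omega_\pm$-bound. The principal obstacle is the trade-off in the choice of zero density: many zeros on the contour are needed for the oscillation but they inflate $Z$ and worsen the rounding error in the discretisation step. The combined hypotheses on $f$ are precisely what allow these competing demands to be reconciled quantitatively, and pinpointing this balance is the delicate technical heart of the construction.
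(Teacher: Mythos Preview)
Your plan has two genuine gaps, both at the heart of the construction.

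First, the template measure. Defining $\Pi(x)=\Li(x)+\sum_{k}\Re\bigl(-x^{\rho_{k}}/(\rho_{k}\log x)\bigr)+\text{correction}$ amounts to introducing each zero via a $\Li(x^{\rho})$-type term. The paper explicitly discusses why this fails for infinitely many zeros (Section~\ref{sec: DMV function}): one must verify that $\Pi$ is non-decreasing, equivalently $\psi'(x)\ge 0$, but the relevant derivative contains the generally divergent series $-\sum_{k}x^{\rho_{k}-1}$, and there is no mechanism to guarantee positivity. The actual cure is the Diamond--Montgomery--Vorhauer function $G(z)=1-(\e^{-z}-\e^{-2z})/z$: one attaches a factor $G(a_{k}(s-\rho_{k}))$ per zero, and because $\log G$ is the Mellin transform of a function $g$ supported on $[\e,\infty)$, only the finitely many indices with $\e^{a_{k}}\le x$ contribute to $\psi'(x)$. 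You invoke DMV only at the discretisation stage; their contribution is precisely this device for the \emph{continuous} template, while the discretisation is a separate random-prime procedure (Theorem~\ref{discretization procedure}).

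Second, the phase alignment. A Dirichlet/Kronecker argument produces \emph{some} $\log x$ at which $N\asymp\exp\bigl(f(u)u\bigr)$ cosines align, but it gives no control over the size of $\log x$ beyond $\log x\lesssim C^{N}$; in particular there is no reason the resulting $x$ satisfies $u_{0}(x)\approx u$, without which the aligned zeros are not the ones contributing the dominant term $x\e^{-\omega(x)}$. The paper avoids this entirely: it places the zeros on a very sparse grid $\log\gamma_{k}=4^{k}$, chooses the target points $x_{k}=u_{0}^{-1}(4^{k})$ \emph{first}, and only then perturbs the auxiliary imaginary parts $\gamma_{k,m}=\gamma_{k}+c_{k,m}m\gamma_{k}/M_{k}$ (with $c_{k,m}=1+o(1)$) so that every $\gamma_{k,m}\log\tilde{x}_{k}-\gamma_{k}\log\tilde{x}_{k}\in 2\pi\Z$. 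Alignment is thus built into the construction, not extracted a~posteriori. As a minor point, after discretisation the factor $\e^{E(s)}$ is not bounded on $\Re s>1/2$; one only gets $E(s)\ll(\sigma-1/2)^{-1}+\sqrt{\log(\abs{t}+1)/(\sigma-1/2)}$, which nonetheless suffices since $\e^{E}$ is zero-free.
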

This is a generalization of the seminal result of Diamond, Montgomery, and Vorhauer \cite{DMV}, which states that assuming only \eqref{theta-well-behaved} with some $\theta\ge1/2$, one is not able to improve upon the classical de la Vall\'ee Poussin zero-free region \eqref{dlVP region} and corresponding error term in the PNT \eqref{dlVP error term} for Beurling number systems. The proof of our Theorem \ref{sharpness theorem} heavily relies on their construction.

\subsection{Outline of the paper}
In the next section, we elaborate on our assumptions and prove a couple of technical lemmas concerning the functions $\omega(x)$, $h(u,x)$, and $u_{0}(x)$. Next, in Section \ref{sec: refinement} we prove Theorem \ref{explicit eps}. In Section \ref{sec: DMV function} we define the Diamond--Montgomery--Vorhauer function, which lies at the heart of the construction, and state the necessary properties. The construction demonstrating Theorem \ref{sharpness theorem} is carried out in the subsequent section. Roughly speaking, the idea is to introduce a lot of zeros on (and close to) the contour $\sigma=1-\eta(\abs{t})$ which are arranged such that they exhibit a constructive interference on a particular sequence $(x_{k})_{k}$, leading to the desired oscillation estimate in the PNT. It is also possible to arrange zeros so that they interfere destructively with one another; in particular we also show that $\eps$ cannot be taken to be $0$ in Theorem \ref{Revesz2}, see Proposition \ref{sharpness Revesz2} below. Finally in Section \ref{sec: zero-distribution and growth}, we utilize these examples to shed some light on the connections between zero-free regions for zeta functions and their growth and zero-distribution.

\subsection{Notations}
For functions $f$ and $g$ we write $f = O(g)$ to indicate that $g>0$ and that there exists a constant $C>0$ such that $\abs{f} \le Cg$. We often use Chebyshev's notation $f\ll g$ to mean $f=O(g)$, and $f\gg g$ to mean $g>0$ and $g=O(f)$. Dependence of the constant $C$ on certain parameters will be indicated by a subscript, e.g.\ $f = O_{\eps}(g)$ or $f \ll_{\eps}g$. The statement $f=o(g)$ means that $g>0$ and $\lim f/g = 0$. The point to which the limit is taken is usually clear from the context but is explicitly mentioned otherwise. Occasionally we also write $f \prec g$ to mean $f=o(g)$, and $f\succ g$ to mean $g>0$ and $g=o(f)$. Finally for real $f$ and positive $g$, $f = \Omega_{+}(g)$ means $\limsup f/g > 0$, while $f = \Omega_{-}(g)$ means $\liminf f/g < 0$. If both hold, we write $f=\Omega_{\pm}(g)$ and if at least one holds, we write $f=\Omega(g)$.

We write $\log_{i} x$ for the $i$-times iterated logarithm of $x$.


\section{Some remarks on the assumptions}
The assumption that $f(u)$ is eventually (meaning for sufficiently large $u$) decreasing, $C^{1}$, and strictly convex is not so restrictive. A general $f$ can always be replaced by its \emph{lower convex envelope} $\breve{f}(u) \coloneqq \sup\{F(u): F \text{ convex with } \forall v\, F(v)\le f(v)\}$ without altering the function $\omega(x)$:
\[
	\inf_{u\ge0} \bigl(\breve{f}(u)\log x + u\bigr) = \inf_{u\ge0} \bigl(f(u)\log x+ u\bigr),
\]
see \cite[Lemma 10]{Re24}. Also, for any $f$ converging to $0$, $\breve{f}$ is always (strictly) decreasing, convex, and hence differentiable at all but countably many points. The strict convexity of $f$ and the continuity of $f'$ we require for technical convenience: it guarantees that for every $x\ge1$, $\inf_{u\ge0}\bigl(f(u)\log x + u\bigr)$ is attained at a unique point, namely $u_{0}(x)$. 

Clearly $\breve{f}(u) \le f(u)$, so if $\sigma > 1-f(\log t)$ is a zero-free region, then so is $\sigma>1-\breve{f}(\log t)$. Upon replacing $f$ by $\breve{f}$, the Beurling zeta function $\zeta_{\MP}$ from Theorem \ref{sharpness theorem} will not necessarily have infinitely many zeros \emph{on} the contour $\sigma \ge 1 - f(\log t)$, but it will have infinitely many zeros \emph{on or to the right} of this contour, namely on the contour $\sigma = 1 - \breve{f}(\log t)$.
\medskip

The assumption $f(u)\succ1/u$ in Theorem \ref{sharpness theorem} is also not so restrictive. In view of \eqref{dlVP region}, we may in any case assume that $f(u) \gg 1/u$, while the case $f(u) = c/u$ is the content of the paper \cite{DMV}.
\medskip

The most restrictive assumption we make is that $f$ is of regular or slow variation. This need not be the case in general. Yet, most of the zero-free regions one typically has in mind satisfy this restriction, such as those given by $f(u) = u^{-\alpha}(\log u)^{\beta}$ with $0\le \alpha\le 1$, $\beta\in \R$ ($\beta \le 0$ if $\alpha=0$ or $\alpha=1$). This assumption could potentially be weakened at the cost of more technical proofs. We deemed this not worth pursuing, as this assumption covers all practical cases, and the proofs are already quite technical at certain points.
	
The study of functions of slow and regular variation is the subject of a well-developed theory, see for instance the treatise \cite{BGT}. Karamata's characterisation theorem states that a regularly varying $f$ (positive and measurable) of index $-\alpha$ is of the form $f(u) = u^{-\alpha}L(u)$ for some slowly varying function $L$, and Karamata's representation theorem states that a slowly varying $L$ (positive and measurable) is of the form
\[
	L(u) = \exp\biggl(g(u) + \int_{u_{0}}^{u}\frac{\eps(v)}{v}\dif v\biggr), \quad u\ge u_{0},
\]
for certain bounded measurable functions $g$ and $\eps$, where $g(u)$ converges to a finite limit, and $\eps(u)$ converges to $0$; see e.g.\ \cite[Theorems 1.3.1, 1.4.1]{BGT}. In particular, $f(u) = u^{-\alpha+o(1)}$. Also, the asymptotic relation $L(\lambda u) \sim L(u)$ holds uniformly for $\lambda$ in compact subsets of $(0,\infty)$ (\cite[Theorem 1.2.1]{BGT}).

\medskip

Recall that $h(u, x) = f(u)\log x + u$, so that $u_{0}(x)$ is the unique solution of $f'(u) = -1/\log x$. Indeed, the assumptions on $f$ imply that $f'$ is eventually continuous and strictly increasing with limit $0$, so that this equation has a unique solution. Moreover, for fixed $x$, $h(u,x)$ is decreasing in $u$ until $u = u_{0}(x)$, where $h(u,x)$ attains its minimum $\omega(x)$, after which it is increasing in $u$. Clearly $u_{0}(x)$ increases to $\infty$ as $x\to\infty$.

We mention the following properties of the function $\omega$, taken from \cite[Lemma 12]{Re24}. They in fact hold for arbitrary functions $\eta$ with $\inf \eta = 0$.
 
\noindent For every $\eps>0$ there exists an $x_{0}(\eps)$ so that
	\begin{equation}
		\abs{\omega(x) - \omega(y)} \le \eps\abs{\log(y/x)}, \quad \text{whenever} \quad x > x_{0}(\eps), \quad \sqrt{x} \le y \le x^{2}. \label{omega slow variation}
	\end{equation}
If $1< y < x$,
	\begin{equation}
		\omega(y) < \omega(x) < \frac{\log x}{\log y} \omega(y). \label{inequality omega}
	\end{equation}

The following technical lemma will be useful in the proof of Theorem \ref{sharpness theorem}.
\begin{lemma}
\label{technical}
Suppose that $f$ is either regularly varying, or slowly varying and satisfies \eqref{decrease |f'|}.
For every $\lambda>1$ there exist $\delta>0$ and $x_{0} \ge 1$ so that for $x\ge x_{0}$ and $u\ge \lambda u_{0}(x)$ or $u\le (1/\lambda)u_{0}(x)$,
\[
	h(u, x) - uf(u) \ge \omega(x) + \delta u_{0}(x).
\]
\end{lemma}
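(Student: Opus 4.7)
I will analyze $\phi(u) \coloneqq h(u,x) - uf(u) = f(u)(\log x - u) + u$ via its convexity structure, reducing the required bound to evaluating $\phi$ at the boundary points $\lambda u_{0}$ and $u_{0}/\lambda$. Computing, $\phi''(u) = f''(u)(\log x - u) - 2f'(u)$ is a sum of two strictly positive terms on $[0, \log x]$ (from strict convexity and monotonic decrease of $f$), while $\phi'(u) = f'(u)(\log x - u) + 1 - f(u)$ is a sum of two strictly positive terms on $[\log x, \infty)$. Hence $\phi$ is strictly convex on $[0, \log x]$ and strictly increasing on $[\log x, \infty)$, so it has a unique minimum $u_{\min}(x) \in [0, \log x]$ and is monotonically decreasing on $[0, u_{\min}]$ and monotonically increasing on $[u_{\min}, \infty)$.

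The crucial step is to show $u_{0}/\lambda < u_{\min}(x) < \lambda u_{0}$ for all large $x$. Using $|f'(u_{0})|\log x = 1$ together with either the regularly varying character of $f$ (through the monotone density theorem applied to the decreasing function $|f'|$) or the explicit assumption \eqref{decrease |f'|}, I obtain $r \in (0,1)$ such that $|f'(\lambda u_{0})|/|f'(u_{0})| \le r$ for $x$ large. Then
\[
\phi'(\lambda u_{0}) = 1 - f(\lambda u_{0}) - |f'(\lambda u_{0})|(\log x - \lambda u_{0}) \ge 1 - r - f(u_{0}) \ge (1-r)/2 > 0
\]
for $x$ large, forcing $u_{\min}(x) < \lambda u_{0}$. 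The symmetric lower bound $|f'(u_{0}/\lambda)| \ge |f'(u_{0})|/r$ (from \eqref{decrease |f'|} applied at $u_{0}/\lambda$) gives $|f'(u_{0}/\lambda)|(\log x - u_{0}/\lambda) \ge 1/r - o(1)$, hence $\phi'(u_{0}/\lambda) \le 1 - 1/r + o(1) < 0$ for $x$ large, so $u_{\min}(x) > u_{0}/\lambda$. It follows that $\phi(u) \ge \phi(\lambda u_{0})$ on $[\lambda u_{0}, \infty)$ and $\phi(u) \ge \phi(u_{0}/\lambda)$ on $[0, u_{0}/\lambda]$.

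It remains to bound these two boundary values. Writing $\phi(\lambda u_{0}) - \omega(x) = \bigl(h(\lambda u_{0}, x) - h(u_{0}, x)\bigr) - \lambda u_{0} f(\lambda u_{0})$, the second term is $o(u_{0})$ since $f(u_{0}) \to 0$. For the first, I use
\[
h(\lambda u_{0}, x) - h(u_{0}, x) = (\lambda - 1) u_{0} - \log x \int_{u_{0}}^{\lambda u_{0}} |f'(v)| \dif v
\]
and split the integral at $(1 + (\lambda-1)/2) u_{0}$: on the left subinterval $|f'(v)| \le 1/\log x$, while on the right $|f'(v)| \le r/\log x$ by \eqref{decrease |f'|}. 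This gives $h(\lambda u_{0}, x) - h(u_{0}, x) \ge (\lambda-1)(1-r) u_{0}/2$, whence $\phi(\lambda u_{0}) \ge \omega(x) + c_{\lambda} u_{0}$ for some explicit $c_{\lambda} > 0$ and $x$ large. A parallel computation on the other side (splitting $[u_{0}/\lambda, u_{0}]$ at $u_{0}/(1 + (\lambda-1)/2)$ and lower-bounding $|f'(v)| \ge 1/(r\log x)$ on the left subinterval) yields $\phi(u_{0}/\lambda) \ge \omega(x) + c'_{\lambda} u_{0}$. Setting $\delta \coloneqq \min(c_{\lambda}, c'_{\lambda})/2$ completes the proof. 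The main obstacle will be these last quantitative bounds: the plain mean value theorem yields only the trivial $\ge 0$, so obtaining strict positive constants requires using \eqref{decrease |f'|} (or its counterpart under regular variation) to quantify the strict convexity of $f$ on a suitably chosen subinterval.
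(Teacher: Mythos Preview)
Your argument is correct and takes a genuinely different route from the paper's proof. The paper treats the regularly varying and slowly varying cases separately: in the former it first establishes the asymptotic $u_{0}(x) \sim \alpha f(u_{0}(x))\log x$ and then computes $h(\lambda u_{0},x)$ directly from the regular variation of $f$ (splitting the range $u\ge\lambda u_{0}$ further into a bounded and an unbounded part); in the latter it establishes $u_{0}(x)=o(f(u_{0}(x))\log x)$ and argues via the mean value theorem together with \eqref{decrease |f'|}. Your approach is more structural and unified: once you have the ratio bound $\abs{f'(\lambda u_{0})}/\abs{f'(u_{0})}\le r<1$ (obtained in the regularly varying case via the monotone density theorem, which the paper does not invoke), the convexity of $\phi$ reduces everything to the two boundary values, and the same integral-splitting estimate handles both cases simultaneously. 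The paper's route yields more explicit constants (such as $\lambda^{-\alpha}/\alpha+\lambda-1-1/\alpha$), while yours is conceptually cleaner and avoids the case distinction.

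Two minor technical points you should tidy up. First, you invoke $\phi''$, but the paper only assumes $f$ is $C^{1}$; however, your convexity claim follows directly by observing that $\phi'(u)=f'(u)(\log x-u)+1-f(u)$ is increasing on $[U,\log x]$ (as the product of a negative increasing and a positive decreasing function is increasing, and $1-f(u)$ is increasing), so $f''$ is not needed. Second, since $f$ is only \emph{eventually} $C^{1}$, decreasing, and strictly convex (say for $u\ge U$), your convexity argument only covers $u\in[U,\log x]$; but for $u$ in the bounded range $[0,U]$ one has $h(u,x)-uf(u)\ge f(U)\log x - O(1)\gg\omega(x)+\delta u_{0}(x)$ since $\omega(x)=o(\log x)$, so this range is trivially handled.
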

\begin{proof}
Suppose first that $f$ is regularly varying of index $-\alpha$. We show that 
\begin{equation}
\label{asymp u-}
	u_{0}(x) \sim \alpha f\bigl(u_{0}(x)\bigr)\log x.
\end{equation}
To prove this, consider $\mu>1$ close to $1$ and $\eps>0$ sufficiently small. Then
\begin{align*}
	f\bigl(u_{0}(x)\bigr)\log x + u_{0}(x) 	&<  f\bigl(\mu u_{0}(x)\bigr)\log x + \mu u_{0}(x)\\
								&\le (\mu^{-\alpha}+\eps) f\bigl(u_{0}(x)\bigr)\log x + \mu u_{0}(x),
\end{align*}
provided that $x\ge x_{0}(\mu, \eps)$ is sufficiently large. From this it follows that 
\[
	u_{0}(x) > \frac{1-\mu^{-\alpha}-\eps}{\mu-1}f\bigl(u_{0}(x)\bigr)\log x, \quad x\ge x_{0}(\mu, \eps).
\]
For every $\eps_{1}>0$, the coefficient of $u_{0}(x)$ above can be made $\ge \alpha -\eps_{1}$ upon choosing $\mu$ sufficiently close to $1$ and $\eps$ sufficiently small. This shows $u_{0}(x) \ge (\alpha + o(1))f\bigl(u_{0}(x)\bigr)\log x$. The other inequality follows analogously, upon considering $\mu <1$ and sufficiently close to $1$.

\medskip

If $u\ge 2(1+1/\alpha)u_{0}(x)$ say, then 
\[
	h(u, x) -uf(u) \ge (1-f(u))u \ge \frac{3(1+1/\alpha)}{2}u_{0}(x) \ge \omega(x) + \frac{1+1/\alpha}{4}u_{0}(x),
\]
provided that $x$ is sufficiently large, as $\omega(x) \sim (1+1/\alpha)u_{0}(x)$ by \eqref{asymp u-}.

If $\lambda u_{0}(x) \le u < 2(1+1/\alpha)u_{0}(x)$, then \eqref{asymp u-} yields
\begin{align*}
	h(u, x) -uf(u) 	&\ge f\bigl(\lambda u_{0}(x)\bigr)\log x + \lambda u_{0}(x) + o(u_{0}(x))\\
				&\ge \Bigl(\frac{\lambda^{-\alpha}}{\alpha} + \lambda +o(1)\Bigr)u_{0}(x) \ge \omega(x) + \Bigl(\frac{\lambda^{-\alpha}}{\alpha}+\lambda-1 -\frac{1}{\alpha}+o(1)\Bigr)u_{0}(x).
\end{align*}
Here, $\lambda^{-\alpha}/\alpha + \lambda -1-1/\alpha > 0$ if $\lambda \neq 1$. Similarly, if $u\le u_{0}(x)/\lambda$,
\[
	h(u,x) - uf(u) \ge \omega(x) + \Bigl(\frac{\lambda^{\alpha}}{\alpha} + \frac{1}{\lambda}-1 - \frac{1}{\alpha} + o(1)\Bigr)u_{0}(x),
\]
and again $\lambda^{\alpha}/\alpha + 1/\lambda - 1 -1/\alpha > 1$ if $\lambda \neq 1$. This proves the lemma if $f$ is regularly varying.

\medskip
Suppose now that $f$ is slowly varying. Then we have
\begin{equation}
\label{u- o}
	u_{0}(x) = o\bigl(f\bigl(u_{0}(x)\bigr)\log x\bigr).
\end{equation}
Indeed, for every $\eps>0$ one has for sufficiently large $x \ge x_{0}(\eps)$:
\[
	f\bigl(u_{0}(x)\bigr)\log x +u_{0}(x) \le f\bigl((1/2)u_{0}(x)\bigr)\log x + u_{0}(x)/2  \le (1+\eps)f\bigl(u_{0}(x)\bigr)\log x + u_{0}(x)/2,
\]
so that $u_{0}(x) \le 2\eps f\bigl(u_{0}(x)\bigr)\log x$.

Let now $\lambda>1$ and write $\lambda = 1+2\nu$, $\nu > 0$. If $u\ge (1+2\nu)u_{0}(x)$, then
\begin{align*}
	&h(u,x) - uf(u) \\	
	&= h\bigl((1+\nu)u_{0}(x), x\bigr) + (1-f(u))u - (1+\nu)u_{0}(x) + \bigl\{f(u)-f\bigl((1+\nu)u_{0}(x)\bigr)\bigr\}\log x \\
					&\ge \omega(x) + (1-f(u))u - (1+\nu)u_{0}(x) - \bigl(u-(1+\nu)u_{0}(x)\bigr)\frac{f'(v)}{f'\bigl(u_{0}(x)\bigr)}.
\end{align*}
Here we used the mean value theorem with some $v\in [(1+\nu)u_{0}(x), u]$ and the fact that $\log x = -1/f'\bigl(u_{0}(x)\bigr)$.
In view of \eqref{decrease |f'|},
\[
	L_{\nu} \coloneqq \limsup_{u\to \infty} \frac{\abs{f'\bigl((1+\nu)u\bigr)}}{\abs{f'(u)}} < 1.
\]
Fix some $0 < \eps < (1-L_{\nu})/2$ As $\abs{f'}$ is decreasing, we have
\[
	\frac{\abs{f'(v)}}{\abs{f'\bigl(u_{0}(x)\bigr)}} \le \frac{\abs{f'\bigl((1+\nu)u_{0}(x)\bigr)}}{\abs{f'\bigl(u_{0}(x)\bigr)}} \le L_{\nu}+\eps,
\]
if $x$ is sufficiently large. Hence
\begin{align*}
	h(u, x) - uf(u) 	&\ge \omega(x) + (1-f(u) - L_{\nu} - \eps)u -(1+\nu)(1-L_{\nu}-\eps)u_{0}(x) \\
					&\ge \omega(x) + \bigl((1-L_{\nu}-\eps)\nu - (1+2\nu)f(u)\bigr)u_{0}(x).
\end{align*}
Now $f \to 0$, so if $x$ is sufficiently large, the above coefficient of $u_{0}(x)$ is $\ge (1-L_{\nu}-2\eps)\nu > 0$.

Writing $1/\lambda = 1-2\nu'$ with $\nu'>0$, a similar reasoning shows that for $u\le (1-2\nu')u_{0}(x)$, 
\[
	h(u, x) - uf(u) \ge \omega(x) + \bigl((1/L - 1 - \eps)\nu' - (1-2\nu')f(u)\bigr)u_{0}(x),
\]
where 
\[
	\frac{1}{L} = \frac{1}{L_{\frac{1}{1-\nu'}-1}} = \liminf_{u\to\infty} \frac{\abs{f'\bigl((1-\nu')u\bigr)}}{\abs{f'(u)}} > 1.
\]
This concludes the proof.
\end{proof}
At this point it is worth mentioning that \eqref{asymp u-} and \eqref{u- o} imply that $u_{0}(x) = o(\log x)$, and consequently also $\omega=o(\log x)$. On the other hand, as we may assume that $f(u)\gg1/u$ in view of \eqref{dlVP region}, $u_{0}(x) \gg \sqrt{\log x}$ and $\omega(x)\gg\sqrt{\log x}$.

Finally, although we do not need it in our proofs, it is interesting to state the following fact.
\begin{proposition}
\label{omega regularly varying}
If $f$ is regularly varying of index $-\alpha$, then $u_{0}(x)$ and $\omega(x)$ are regularly varying functions of $\log x$ of index $\frac{1}{1+\alpha}$. If $f$ is slowly varying, then $\omega(x)$ is a regularly varying function of $\log x$ of index $1$.
\end{proposition}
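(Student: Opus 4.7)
The plan is to treat the regularly varying and slowly varying cases separately, both relying on the characterisation $f'(u_{0}(x)) = -1/\log x$ together with the asymptotic identities \eqref{asymp u-} and \eqref{u- o} already established in the proof of Lemma~\ref{technical}.

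For $f$ regularly varying of index $-\alpha$ with $\alpha > 0$, I would rewrite \eqref{asymp u-} as $\log x \sim F(u_{0}(x))$ where $F(u) \coloneqq u/(\alpha f(u))$ is regularly varying (in $u$) of index $1+\alpha$. Classical asymptotic inversion for regularly varying functions of positive index (\cite[Theorem 1.5.12]{BGT}) then produces an asymptotic inverse $F^{\leftarrow}$, itself regularly varying of index $1/(1+\alpha)$ and satisfying $F(F^{\leftarrow}(L)) \sim L$. From $F(u_{0}(x)) \sim \log x$ I conclude $u_{0}(x) \sim F^{\leftarrow}(\log x)$, so that $L \mapsto u_{0}(e^{L})$ is regularly varying in $L$ of index $1/(1+\alpha)$. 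Since \eqref{asymp u-} also gives $\omega(x) = f(u_{0}(x))\log x + u_{0}(x) \sim ((1+\alpha)/\alpha)\, u_{0}(x)$, the same conclusion transfers to $\omega$.

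For $f$ slowly varying, \eqref{u- o} gives $\omega(x) \sim f(u_{0}(x))\log x$, so establishing regular variation of index $1$ reduces to showing $f(u_{0}(x^{\lambda}))/f(u_{0}(x)) \to 1$ for every fixed $\lambda > 0$. The main step is to confine the ratio $u_{0}(x^{\lambda})/u_{0}(x)$ to a compact subinterval of $(0,\infty)$ for all large $x$; once this is accomplished, the uniform convergence theorem for slowly varying functions (\cite[Theorem 1.2.1]{BGT}) supplies the limit. For the bound, I would use $|f'(u_{0}(x^{\lambda}))|/|f'(u_{0}(x))| = 1/\lambda$ and the monotonicity of $|f'|$ (which follows from strict convexity of $f$). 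Given $\lambda > 1$, pick $\delta > 0$ and $n \in \N$ with $L_{\delta}^{n} < 1/\lambda$, where $L_{\delta} \coloneqq \limsup_{u \to \infty}|f'((1+\delta)u)|/|f'(u)| < 1$ by \eqref{decrease |f'|}. Iterating \eqref{decrease |f'|} gives $\limsup_{u \to \infty}|f'((1+\delta)^{n}u)|/|f'(u)| \le L_{\delta}^{n}$, so for $x$ large one has $|f'((1+\delta)^{n}u_{0}(x))| \le |f'(u_{0}(x^{\lambda}))|$, which by monotonicity forces $u_{0}(x^{\lambda}) \le (1+\delta)^{n} u_{0}(x)$. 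The case $0 < \lambda < 1$ follows by swapping the roles of $x$ and $x^{\lambda}$.

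The regularly varying case is essentially classical Karamata bookkeeping and should present no difficulty. The main obstacle is the quantitative control of $u_{0}(x^{\lambda})/u_{0}(x)$ in the slowly varying case, where one has to extract usable two-sided bounds out of the pure lim-sup hypothesis \eqref{decrease |f'|} via the iteration sketched above.
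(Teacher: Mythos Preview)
Your proposal is correct, and both halves differ from the paper's argument in instructive ways.

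In the regularly varying case, the paper proceeds directly: it first uses the sandwich \eqref{inequality omega} together with \eqref{asymp u-} to trap $u_{0}(x^{\lambda})/u_{0}(x)$ in a compact interval, then computes $\omega(x^{\lambda})$ via the uniform convergence theorem and compares it with the test value $h\bigl(\lambda^{1/(1+\alpha)}u_{0}(x),x^{\lambda}\bigr)$, concluding $c_{\lambda}(x)\to\lambda^{1/(1+\alpha)}$ from the strict minimum of $c\mapsto \lambda c^{-\alpha}/\alpha+c$. Your route through the asymptotic inversion theorem \cite[Theorem~1.5.12]{BGT} is shorter and more conceptual: once \eqref{asymp u-} is rewritten as $\log x\sim F(u_{0}(x))$ with $F\in R_{1+\alpha}$, everything follows from off-the-shelf Karamata theory. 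The paper's argument has the minor advantage of being self-contained, but yours makes clearer why the index must be $1/(1+\alpha)$.

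In the slowly varying case the two arguments diverge more substantially. The paper obtains $f(u_{0}(x^{\lambda}))\sim f(u_{0}(x))$ without ever bounding the ratio $u_{0}(x^{\lambda})/u_{0}(x)$: the upper bound on $f(u_{0}(x^{\lambda}))$ comes from \eqref{inequality omega} combined with \eqref{u- o}, and the lower bound from the mean value theorem and the monotonicity of $f'$, using only $u_{0}(x^{\lambda})=o(f(u_{0}(x))\log x)$. In particular the paper's proof does \emph{not} invoke \eqref{decrease |f'|}, whereas your iteration argument relies on it essentially. Your approach is thus slightly less general (though this is harmless, since \eqref{decrease |f'|} is assumed in all the applications), but it has the bonus of yielding the two-sided bound $u_{0}(x^{\lambda})/u_{0}(x)\asymp_{\lambda}1$, which the paper only obtains later under the stronger hypothesis that $f'$ itself is regularly varying.
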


\begin{proof}
Suppose first that $f$ is regularly varying of index $-\alpha$. Let $\lambda>1$. We will show that $u_{0}(x^{\lambda}) \sim \lambda^{\frac{1}{1+\alpha}}u_{0}(x)$. In view of \eqref{asymp u-} this implies that also $\omega(x^{\lambda}) \sim \lambda^{\frac{1}{1+\alpha}}\omega(x)$.

Combining \eqref{inequality omega} with \eqref{asymp u-} we obtain
\[
	\Bigl(1+\frac{1}{\alpha} + o(1)\Bigr)u_{0}(x) < \Bigl(1+\frac{1}{\alpha} + o(1)\Bigr)u_{0}(x^{\lambda}) < \lambda\Bigl(1+\frac{1}{\alpha} + o(1)\Bigr)u_{0}(x).
\]
Hence we may write $u_{0}(x^{\lambda}) = c_{\lambda}(x)u_{0}(x)$, with $c_{\lambda}(x) \asymp 1$. As \eqref{regular variation} holds uniformly for $\lambda$ in compacta, it follows that 
\[
	\omega(x^{\lambda}) = \bigl(c_{\lambda}(x)^{-\alpha}+o(1)\bigr) f\bigl(u_{0}(x)\bigr)\lambda\log x + c_{\lambda}(x)u_{0}(x) = \Bigl(\frac{\lambda c_{\lambda}(x)^{-\alpha}}{\alpha} + c_{\lambda}(x) + o(1)\Bigr)u_{0}(x).
\]
On the other hand,
\[
	\omega(x^{\lambda}) \le h\bigl(\lambda^{\frac{1}{1+\alpha}}u_{0}(x), \log x^{\lambda}\bigr) = \lambda^{\frac{1}{1+\alpha}}\Bigl(\frac{1}{\alpha} + 1 + o(1)\Bigr)u_{0}(x).
\]
This now implies that $c_{\lambda}(x) \to \lambda^{\frac{1}{1+\alpha}}$. Indeed, the function $g(c) = \lambda c^{-\alpha}/\alpha + c$ attains its minimum $\lambda^{\frac{1}{1+\alpha}}(1+1/\alpha)$ at $c=\lambda^{\frac{1}{1+\alpha}}$. Hence, if $c_{\lambda}(x)$ is an $\eps$ away from $ \lambda^{\frac{1}{1+\alpha}}$ for arbitrarily large values of $x$, then $g\bigl(c_{\lambda}(x)\bigr) \ge \lambda^{\frac{1}{1+\alpha}}(1/\alpha+1) +\eps'$ for some $\eps'$, for arbitrarily large $x$. For those $x$ we would obtain
\[
	\Bigl(\lambda^{\frac{1}{1+\alpha}}\Bigl(\frac{1}{\alpha}+1\Bigr) + \eps' +o(1)\Bigr)u_{0}(x) \le \lambda^{\frac{1}{1+\alpha}}\Bigl(\frac{1}{\alpha} + 1 + o(1)\Bigr)u_{0}(x),
\]
a contradiction. Thus, $c_{\lambda}(x) \to \lambda^{\frac{1}{1+\alpha}}$ and $u_{0}(x^{\lambda}) = \bigl(\lambda^{\frac{1}{1+\alpha}}+o(1)\bigr)u_{0}(x)$.

\medskip
Suppose now that $f$ is slowly varying and let again $\lambda > 1$. Combining \eqref{inequality omega} and \eqref{u- o} we obtain
\[
	\bigl(1+o(1)\bigr)f\bigl(u_{0}(x^{\lambda})\bigr)\log x^{\lambda} < \lambda\bigl(1+o(1)\bigr)f\bigl(u_{0}(x)\bigr)\log x,
\]
so that $f\bigl(u_{0}(x^{\lambda})\bigr) \le \bigl(1+o(1)\bigr)f\bigl(u_{0}(x)\bigr)$. On the other hand, by the mean value theorem, the fact that $f'$ is increasing, and 
\[
	u_{0}(x^{\lambda}) = o\bigl(f\bigl(u_{0}(x^{\lambda})\bigr)\log x^{\lambda}\bigr)  = o\bigl(f\bigl(u_{0}(x)\bigr)\log x\bigr),
\]
we see that 
\begin{align*}
	f\bigl(u_{0}(x^{\lambda})\bigr) 	&\ge f\bigl(u_{0}(x)\bigr) + \bigl(u_{0}(x^{\lambda}) - u_{0}(x)\bigr)f'\bigl(u_{0}(x)\bigr) \\
							&=  f\bigl(u_{0}(x)\bigr) - \frac{u_{0}(x^{\lambda}) - u_{0}(x)}{\log x} = \bigl(1-o(1)\bigr)f\bigl(u_{0}(x)\bigr).
\end{align*}
We conclude that $f\bigl(u_{0}(x^{\lambda})\bigr) \sim f\bigl(u_{0}(x)\bigr)$ and 
\[
	\omega(x^{\lambda}) \sim f\bigl(u_{0}(x^{\lambda})\bigr)\log x^{\lambda} \sim \lambda f\bigl(u_{0}(x)\bigr)\log x \sim \lambda \omega(x).
\]
\end{proof}
As a consequence of this proposition we have (with $\alpha=0$ if $f$ is slowly varying) 
\[
	\omega(x) = (\log x)^{\frac{1}{1+\alpha}+o(1)}.
\]

Most practically occurring slowly varying functions $f$ (e.g.\ $f(u)=1/\log u$) are such that $f'$ is regularly varying of index $-1$. One may verify that in that case, the inverse function of the derivative, $(f')^{-1}$, is regularly varying of index $1/(-1) = -1$ at the origin. So in that case,
\[
	u_{0}(x^{\lambda}) = (f')^{-1}\Bigl(-\frac{1}{\log x^{\lambda}}\Bigr) = (f')^{-1}\biggl(\frac{1}{\lambda} f'\bigl(u_{0}(x)\bigr)\biggr) \sim \lambda u_{0}(x),
\]
so that $u_{0}(x)$ is a regularly varying function of $\log x$ of index $1$. 

\section{Refinement of the Johnston--Pintz--R\'ev\'esz theorem}
\label{sec: refinement} 
The starting point in the original proof of Pintz is an explicit formula for the error term in the prime number theorem, expressing it as a sum over non-trivial zeros of the Riemann zeta function. For  Beurling number systems satisfying \eqref{theta-well-behaved}, the following explicit Riemann--von Mangoldt-style formula for $\psi_{\MP}(x)$ was established by R\'ev\'esz (see\footnote{In the two quoted results, the sum is over the zeros lying to the right of a certain broken line $\Gamma_{b}$ in the strip $(\theta+b)/2 \le \sigma \le b$ and $T$ is restricted to belong to a certain sequence $(t_{k})_{k}$ with $1\le t_{k+1}-t_{k} \le 2$. Using the fact that the number of zeros in $[(\theta+b)/2, 1]\times \I[t, t+1]$ is $O_{b}(\log t)$, one readily sees that the contribution from the zeros with $(\theta+b)/2 \le \beta< b$ can be absorbed in the error term and that the formula holds for all $T$ and not only on the sequence $t_k$ defined in R\'ev\'esz's paper.} \cite[Theorem 5.1]{Re21} and the corrected version \cite[Proposition 1]{Re24}). Let $b \in (\theta,1)$ and $4\le T\le x$. Then
\begin{equation}
\label{explicit Riemann--von Mangoldt}
	\psi_{\MP}(x) = x - \sum_{\substack{\beta\ge b,\\ \abs{\gamma}\le T}}\frac{x^{\rho}}{\rho} + O_{b}\Bigl\{\Bigl(x^{b} + \frac{x}{T}\Bigr)(\log x)^{3}\Bigr\},
\end{equation}
where the sum is over the zeros $\rho=\beta+\I\gamma$ of $\zeta_{\MP}(s)$.

\medskip

The second main ingredient is a zero-density theorem for Beurling zeta functions. Letting $N(\sigma, T)$ be the number of zeros of $\zeta_{\MP}$ in the rectangle $[\sigma, 1]\times\I[-T,T]$ (counted with multiplicity), we consider estimates of the following form:
\begin{equation}
\label{general zero-density}
	N(\sigma, T) \ll T^{A(1-\sigma)^{B}}(\log T)^{C}, \quad \text{uniformly for } \sigma_{0} \le \sigma \le 1,
\end{equation}
for certain parameters $\sigma_{0} \in (\theta, 1)$, $A>0$, $B\ge1$, $C\ge0$.

The current record for Beurling zeta functions satisfying \eqref{theta-well-behaved} is (see \cite[Theorem 1.2]{B25})
\begin{equation}
\label{zero-density}
	N(\sigma, T) \ll T^{\frac{4(1-\sigma)}{1-\theta}}(\log T)^{9}, \quad \text{uniformly for }  \frac{1+\theta}{2} \le \sigma \le 1,
\end{equation}
while in \cite{DMV} one finds for every $\eps>0$ and $\theta\in (1/2, 1)$ an example exhibiting $N(\sigma, T_{n}) > T_{n}^{\frac{(1-\eps)(1-\sigma)}{1-\theta}}$ on a certain unbounded sequence $(T_{n})_{n}$; see also \eqref{number of zeros} below.

Theorem \ref{explicit eps} follows immediately from the following more general theorem with the choice $A=\frac{4}{1-\theta}$, $B=1$, $C=9$.
\begin{theorem}
\label{refinement JPR}
Let $(\MP, \MN)$ be a Beurling number system satisfying \eqref{theta-well-behaved} and with $\zeta_{\MP}(s)$ satisfying \eqref{general zero-density}. Let $f$ be eventually $C^{1}$, decreasing, strictly convex, and suppose further that $f$ is regularly varying of index $-\alpha$, $0<\alpha\le 1$, or that it is slowly varying and satisfies $\frac{1}{f(u)^{B}}\log\frac{1}{f(u)} = o(u)$ and \eqref{decrease |f'|}.

If $\zeta_{\MP}(s)$ has no zeros in the domain $\sigma > 1-f(\log \abs{t})$, then we have for every $\delta>0$,
\[
	\psi_{\MP}(x) - x \ll_{\delta} x\exp\Bigl(-\omega(x) + (A+\delta)f(u_{0}(x))^{B}u_{0}(x)\Bigr)\omega(x)^{C}.
\]
\end{theorem}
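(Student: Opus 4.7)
The plan is to adapt the Pintz--Johnston method to the Beurling setting, combining the explicit formula \eqref{explicit Riemann--von Mangoldt} with the zero-free region and the zero-density estimate \eqref{general zero-density}. Fix $b \in (\theta, 1)$ and choose a truncation parameter $T$ with $\log T \approx \omega(x) - (A+\delta)u_0(x)f(u_0(x))^B$, so that the remainder $x/T$ in the explicit formula matches the target bound and $(\log T)^C \asymp \omega(x)^C$; the trivial term $x^b$ is absorbed since $\omega(x) = o(\log x)$.

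For any zero $\rho = \beta + \I\gamma$ in the truncated sum, the zero-free region $\beta \le 1 - f(\log\abs{\gamma})$ yields the pointwise bound
\[
	\frac{\abs{x^\rho}}{\abs{\rho}} \le \frac{x^\beta}{\abs{\gamma}} \le x\exp\bigl(-h(\log\abs{\gamma}, x)\bigr).
\]
Decompose the zeros into boxes $B_{n,m} = \{\rho : \log\abs{\gamma} \in [n, n+1),\ \beta \in [1-f(m), 1-f(m+1))\}$ for integers $0 \le m \le n \le \log T$ (the restriction $m \le n$ comes from the zero-free region itself). Applying \eqref{general zero-density} with $\sigma = 1-f(m)$ and $T_n = e^{n+1}$ gives $\#B_{n,m} \ll e^{(n+1)Af(m)^B}(n+1)^C$, while the supremum of $x^\beta/\abs{\gamma}$ on $B_{n,m}$ is $\ll x\exp(-h(m+1, x) + O(1))$. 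Summing the resulting geometric series in $n \ge m$ (which converges since $Af(m)^B < 1$ for $m$ large) reduces the zero-sum to
\[
	\ll x\sum_m (m+1)^C \exp\bigl(g(m+1)\bigr), \qquad g(u) := -h(u,x) + uAf(u)^B.
\]

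The function $g$ is strictly concave, since $h$ is strictly convex and $uAf(u)^B$ grows sub-linearly by regular or slow variation, and has unique maximum at $u^\ast = u_0(x)(1+o(1))$. A Taylor expansion around $u_0(x)$, using \eqref{asymp u-} or \eqref{u- o} together with the estimate $f''(u_0)\log x \asymp 1/u_0$, yields
\[
	\max_u g(u) = -\omega(x) + Au_0(x)f(u_0(x))^B + O\bigl(u_0(x)f(u_0(x))^{2B}\bigr),
\]
where the error is $o(u_0 f(u_0)^B)$ and absorbed into the $\delta$-slack. For $m$ outside a neighborhood $[u_0(x)/\lambda, \lambda u_0(x)]$, Lemma \ref{technical} together with the boundedness of $Af(m)^{B-1}$ on the relevant interval gives $g(m+1) \le \max g - c\, u_0(x)$ for some $c > 0$, making these tail contributions exponentially negligible. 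On the central range, Laplace's method (with $\abs{g''(u_0)} \asymp 1/u_0$) bounds the sum; any remaining polynomial-in-$u_0$ factor beyond the declared $\omega(x)^C$ is absorbed into $\exp(\delta u_0 f(u_0)^B)$, using the hypothesis $f(u)^{-B}\log(1/f(u)) = o(u)$ in the slow-variation case (which yields $\log u_0 = o(u_0 f(u_0)^B)$) and its automatic analogue for regular variation with $0 < \alpha \le 1$.

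The main obstacle is the Laplace-type analysis in the last step: proving rigorously that $\max g = -\omega(x) + (A+o(1))u_0(x)f(u_0(x))^B$ with sub-leading errors of the correct size, treating regular and slow variation uniformly, and balancing the truncation parameter $T$ against the zero-density estimate so as to recover the prefactor $\omega(x)^C$ claimed in the theorem.
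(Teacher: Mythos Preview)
Your overall strategy---explicit formula, zero-density on a grid in $(\log\gamma,\beta)$, reduce to a one-variable maximization of $g(u)=-h(u,x)+Auf(u)^{B}$---is sound and close in spirit to the paper's proof. The genuine gap is in the Laplace step. You assert that $g$ is strictly concave and then Taylor-expand using the estimate $f''(u_{0})\log x\asymp 1/u_{0}$. Neither is justified: the hypotheses give $f$ only $C^{1}$, so $f''$ need not exist, and even where it does, strict convexity of $f$ does not force $f''>0$ pointwise. Moreover, sub-linear growth of $u\mapsto Auf(u)^{B}$ does not imply concavity of that term, so the concavity of $g$ is unproven. The same regularity issue blocks your claim $u^{\ast}=u_{0}(x)(1+o(1))$, since locating $u^{\ast}$ via $g'=0$ requires inverting $f'$ with quantitative control you do not have. (A smaller point: Lemma~\ref{technical} concerns $h(u,x)-uf(u)$, not $h(u,x)-Auf(u)^{B}$, so your tail estimate needs a short extra argument exploiting $f(u)^{B-1}\to0$ when $B>1$, together with the restriction $u\le\log T$ when $B=1$.)

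The paper sidesteps all second-order analysis by a two-stage splitting that never looks at $g$ as a function. First it splits in $\gamma$ only, via the level set $h(\cdot,x)=(1+\delta)\omega(x)$: the sub- and super-level tails $S_{1},S_{3}$ are disposed of with the Carlson-type density \eqref{zero-density} at the fixed height $\sigma=1-\varepsilon$. On the central $\gamma$-range it then splits in $\beta$: zeros with $1-\beta\ge(1+\delta')f(u_{0}(x))$ are handled by the trivial bound $x^{\beta}/\gamma\le x\exp(-(1+\delta')f(u_{0})\log x-\log\gamma)$, while the remaining zeros are counted by a \emph{single} application of \eqref{general zero-density} at the one value $\sigma=1-(1+\delta')f(u_{0}(x))$. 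This last step directly produces the factor $\exp\bigl(A(1+\delta')^{B}(1+o(1))f(u_{0})^{B}u_{0}\bigr)\omega(x)^{C}$, with the regular- and slowly-varying cases handled separately only to control the size of the central $\gamma$-window. Your route could be repaired along the same lines: replace the Taylor/concavity argument by the elementary bound $g(u)\le-\omega(x)+A(1+O(\varepsilon))u_{0}f(u_{0})^{B}$ on $u\in[(1-\varepsilon)u_{0},(1+\varepsilon)u_{0}]$ (using only $h\ge\omega$ and the uniform convergence in the regular/slow variation relation), and handle the complement via Lemma~\ref{technical} together with $uf(u)^{B}=o(u_{0})$ there.
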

\begin{remark}
\label{comparison Johnston}
\begin{itemize}
\item If $f(u) \succ (\log u/u)^{1/B}$, then the factor $\omega(x)^{C}$ can be omitted, as then $\log \omega (x) \asymp \log_{2} x \prec f(u_{0}(x))^{B}u_{0}(x)$ since $\log u_{0}(x) \asymp \log_{2}x$.
\item For the Riemann zeta function, Johnston \cite[Theorem 2.1]{Jo24} obtained the above result with $(A+\delta)f(u_{0}(x))^{B}u_{0}(x)$ replaced by $2A\omega(x)(\frac{\omega(x)}{\log x})^{B}$, without needing to assume however that $f$ is regularly or slowly varying. Note that, as $\omega(x) = f(u_{0}(x))\log x + u_{0}(x)$,
\[
	(A+\delta)f(u_{0}(x))^{B}u_{0}(x) < (A+\delta)\Bigl(\frac{\omega(x)}{\log x}\Bigr)^{B}\omega(x),
\]
and that if $f$ is slowly varying, 
\[
	(A+\delta)f(u_{0}(x))^{B}u_{0}(x) = o\biggl(\Bigl(\frac{\omega(x)}{\log x}\Bigr)^{B}\omega(x)\biggr),
\]
in view of \eqref{u- o}. For example, when $f(u) = u^{-\alpha}$ or $f(u)=1/\log u$, we have
\begin{align*}
	f\bigl(u_{0}(x)\bigr)^{B}u_{0}(x)	&= (\alpha\log x)^{\frac{1-B\alpha}{1+\alpha}} 
	\quad &&\omega(x)\Bigl(\frac{\omega(x)}{\log x}\Bigr)^{B} = \frac{(1+\alpha)^{1+B}}{\alpha}(\alpha\log x)^{\frac{1-B\alpha}{1+\alpha}}, \quad \text{and}\\
	f\bigl(u_{0}(x)\bigr)^{B}u_{0}(x)	&\sim\frac{\log x}{(\log_{2} x)^{2+B}} \quad &&\omega(x)\Bigl(\frac{\omega(x)}{\log x}\Bigr)^{B} \sim \frac{\log x}{(\log_{2}x)^{1+B}},
\end{align*}
respectively.
\end{itemize}
\end{remark}

\begin{proof}[Proof of Theorem \ref{refinement JPR}]
We first fix some small $0 <\eps<1-\sigma_{0}$, $0<\delta <1$, and $0<\delta'<1$. We apply \eqref{explicit Riemann--von Mangoldt} with $b=1-\eps$ and $T = \e^{2\omega(x)}$. Clearly $T<x$ since $\omega(x) = o(\log x)$. As $1-\beta \ge f(\log \gamma)$ for every zero $\rho=\beta+\I\gamma$, $\gamma\ge\e$ we obtain
\[
	\frac{\psi_{\MP}(x)-x}{x} \ll \sum_{\substack{\beta\ge 1-\eps,\\ \e\le \gamma \le T}}\exp\bigl(-(f(\log \gamma)\log x + \log \gamma)\bigr) + \bigl(x^{-\eps}+\e^{-2\omega(x)}\bigr)(\log x)^{3}.
\]
We first remark that $x^{-\eps}(\log x)^{3} \ll_{\eps} \e^{-\omega(x)}$ and $\e^{-2\omega(x)}(\log x)^{3} \ll \e^{-\omega(x)}$, as we may assume that $f(u)\gg 1/u$ and so $\omega(x) \gg \sqrt{\log x}$ in view of \eqref{dlVP region}.

Next we define $u_{\delta, +}(x)$ and $u_{\delta,-}(x)$ to be the unique solutions of $h(u, x) = (1+\delta)\omega(x)$ with $u> u_{0}(x)$ and $u < u_{0}(x)$ respectively. Note that $u_{\delta,+}(x) \le (1+\delta)\omega(x)$ as $h\bigl((1+\delta)\omega(x), x\bigr) > (1+\delta)\omega(x)$. We now set
\begin{gather*}
	\tau = \exp(u_{0}(x)), \quad \tau_{+} = \exp(u_{\delta,+}(x)), \quad \tau_{-} = \exp(u_{\delta,-}(x)), \\
	\sum_{\substack{\beta\ge 1-\eps,\\ \e\le \gamma \le T}}\exp\bigl(-h(\log \gamma, x)\bigr) = S_{1} + S_{2} + S_{3},
\end{gather*}
where $S_{1}$, $S_{2}$, and $S_{3}$ are the sums over the zeros in the ranges $\gamma \le \tau_{-}$, $\tau_{-}< \gamma \le \tau_{+}$, and $\tau_{+}<\gamma\le T$ respectively.

Employing \eqref{zero-density} we obtain
\[
	S_{1} \ll_{\eps} (\tau_{-})^{\frac{5\eps}{1-\theta}}\exp\bigl(-(1+\delta)\omega(x)\bigr) \le \exp\Bigl(-\Bigl(1+\delta-\frac{10\eps}{1-\theta}\Bigr)\omega(x)\Bigr),
\]
where we used that $h(u,x)$ is decreasing in $u$ for $u< u_{0}(x)$, and $u_{\delta,-}(x) \le h\bigl(u_{\delta,-}(x), x\bigr) \le 2\omega(x)$. Hence if $\eps$ is sufficiently small with respect to $\delta$, this contribution is $O_{\eps}(\e^{-\omega(x)})$.

To estimate $S_{3}$ we perform a dyadic splitting. Again using \eqref{zero-density} and the fact that $h(u,x)$ is now increasing in $u$ for $u>u_{0}(x)$, we see that 
\begin{align*}
	S_{3}&\ll \sum_{j=0}^{\log x}(2^{j}\tau_{+})^{\frac{5\eps}{1-\theta}}\exp\Bigl(-h\bigl(\log(2^{j}\tau_{+}), x\bigr)\Bigr) \\
		&\le \sum_{j=0}^{\log x}(2^{j}\tau_{+})^{-\frac{\eps}{1-\theta}}\exp\Bigl(-\Bigl(1-\frac{6\eps}{1-\theta}\Bigr)h\bigl(\log(2^{j}\tau_{+}), x\bigr)\Bigr)\\
		&\le \exp\Bigl(-\Bigl(1-\frac{6\eps}{1-\theta}\Bigr)(1+\delta)\omega(x)\Bigr)\sum_{j=0}^{\infty}2^{-\eps j} \ll_{\eps} \e^{-\omega(x)},
\end{align*}
provided that $\eps$ is sufficiently small with respect to $\delta$.

\medskip

Next we write $S_{2}$ as $S_{2} = S_{2,a} + S_{2,b}$, where $S_{2,a}$ is the sum over the zeros with $\tau_{-} < \gamma \le \tau_{+}$ and $1-\eps \le \beta \le 1-(1+\delta')f\bigl(u_{0}(x)\bigr)$, and $S_{2,b}$ is the sum over the zeros with $\tau_{-} < \gamma \le \tau_{+}$ and $1-(1+\delta')f(u_{0}(x)) < \beta \le 1-f(\log \gamma)$.

Suppose first that $f$ is slowly varying of index $-\alpha$, $0<\alpha\le 1$. Then \eqref{asymp u-} implies that 
\[
	\omega(x) \sim \biggl(1+\frac{1}{\alpha}\biggr)u_{0}(x), \quad h(\lambda u_{0}(x), x) \sim \frac{\lambda^{-\alpha}+\alpha\lambda}{1+\alpha}\omega(x),
\]
so that $u_{\delta, \pm}(x) \sim \lambda_{\delta, \pm}u_{0}(x)$, where $\lambda_{\delta, +}$ (resp.\ $\lambda_{\delta, -}$) is the unique solution of $\frac{\lambda^{-\alpha}+\alpha\lambda}{1+\alpha} = 1+\delta$ with $\lambda > 1$ (resp.\ with $\lambda< 1$). Furthermore, $\lambda_{\delta, \pm} = 1 + O_{\alpha}(\sqrt{\delta})$. Hence we obtain
\begin{align*}
	S_{2,a} 	&\ll_{\eps}(\tau_{+})^{\frac{5\eps}{1-\theta}}\exp\Bigl(-(1+\delta')f\bigl(u_{0}(x)\bigr)\log x - \log \tau_{-}\Bigr)\\
			&\ll \exp\Bigl(-\omega(x) - \delta'f\bigl(u_{0}(x)\bigr)\log x + u_{0}(x) - (\lambda_{\delta,-}-\eps)u_{0}(x) + \frac{5\eps}{1-\theta}(\lambda_{\delta,+}+\eps)u_{0}(x)\Bigr) \\
			&\ll \exp\Bigl(-\omega(x) - \delta'f\bigl(u_{0}(x)\bigr)\log x +\Bigl(\eps + \frac{10\eps}{1-\theta}+O(\sqrt{\delta}\,)\Bigr)u_{0}(x)\Bigr).
\end{align*}
Using \eqref{asymp u-}, this is $\le \e^{-\omega(x)}$ provided that $\eps$ and $\delta$ are sufficiently small with respect to $\delta'$. 

For the final sum $S_{2,b}$ we employ \eqref{general zero-density} to obtain
\begin{align*}
	S_{2,b}	&\ll_{\eps}(\tau_{+})^{A[(1+\delta')f(u_{0}(x))]^{B}}(\log\tau_{+})^{C}\e^{-\omega(x)}\\
			&\ll\exp\Bigl(-\omega(x) + A(1+\delta')^{B}\bigl(1+O(\sqrt{\delta}\,)\bigr)f\bigl(u_{0}(x)\bigr)^{B}u_{0}(x)\Bigr)\omega(x)^{C}.
\end{align*}

\medskip

Suppose now that $f$ is slowly varying. Then \eqref{u- o} implies $f\bigl(u_{0}(x)\bigr)\log x \sim \omega(x)$, so that by a dyadic splitting,
\begin{align*}
	S_{2,a}	&\ll_{\eps}\sum_{j=0}^{\log x}(2^{j}\tau_{-})^{\frac{5\eps}{1-\theta}-1}\exp\bigl(-(1+\delta')f\bigl(u_{0}(x)\bigr)\log x\bigr) \\
			&\ll \sum_{j=0}^{\log x}2^{-j/2} \e^{-\omega(x)} \ll \e^{-\omega(x)}.
\end{align*}

In the sum $S_{2,b}$ we make a final subdivision $S_{2,b} = S_{2,b,i} + S_{2, b,ii}$ based on whether $\tau_{-} < \gamma \le \tau^{1+\delta}$  or $\tau^{1+\delta} < \gamma \le \tau_{+}$ (note that $u_{0}(x) = o(u_{\delta, +}(x))$ in this case). Using \eqref{general zero-density} and $\log \tau < \omega(x)$ we obtain
\[
	S_{2,b,i} \ll (\tau^{1+\delta})^{A[(1+\delta')f(u_{0}(x))]^{B}}\omega(x)^{C}\e^{-\omega(x)} = \exp\Bigl(-\omega(x) + A(1+\delta')^{B}(1+\delta)f\bigl(u_{0}(x)\bigr)^{B}u_{0}(x)\Bigr)\omega(x)^{C}.
\]
To estimate $S_{2,b,ii}$ we perform again a dyadic splitting to see that
\begin{align*}
	S_{2,b,ii}	&\ll \sum_{j=0}^{\log x}(2^{j}\tau^{1+\delta})^{A[(1+\delta')f(u_{0}(x))]^{B}}\omega(x)^{C}\exp\Bigl(-f\bigl(\log(2^{j}\tau^{1+\delta})\bigr)\log x - \log(2^{j}\tau^{1+\delta})\Bigr) \\
			&\ll \sum_{j=0}^{\log x}(2^{j}\tau^{1+\delta})^{-\delta f(u_{0}(x))^{B}}\exp\Bigl(-f\bigl(\log(2^{j}\tau^{1+\delta})\bigr)\log x -(1-\epsilon) \log(2^{j}\tau^{1+\delta})\Bigr)\omega(x)^{C},
\end{align*}
where we have written $\epsilon = [A(1+\delta')^{B}+\delta]f\bigl(u_{0}(x)\bigr)^{B}$.
Consider now the function
\[
	f(u)\log x + (1-\epsilon) u.
\]
We claim that it is increasing for $u\ge (1+\delta)u_{0}(x)$ if $f$ satisfies \eqref{decrease |f'|}. Indeed, noting that $f'\bigl(u_{0}(x)\bigr) = -1/\log x$ and $\epsilon = \epsilon(x) = o(1)$,
\begin{align*}
	\frac{\partial}{\partial u}\bigl(f(u)\log x + (1-\epsilon) u\bigr) 	&= f'(u)\log x + 1-\epsilon \ge f'\bigl((1+\delta)u_{0}(x)\bigr)\log x + 1-\epsilon \\
													&= 1 - \epsilon - \frac{\abs{f'\bigl((1+\delta)u_{0}(x)\bigr)}}{\abs{f'\bigl(u_{0}(x)\bigr)}} \ge 0,
\end{align*}
if $x$ is sufficiently large. Hence
\[
	S_{2,b,ii} \ll \sum_{j=0}^{\log x}(2^{j}\tau^{1+\delta})^{-\delta f(u_{0}(x))^{B}} \exp\Bigl(-f\bigl((1+\delta)u_{0}(x)\bigr)\log x - (1-\epsilon)(1+\delta)u_{0}(x)\Bigr)\omega(x)^{C}.
\]
Using that $\frac{1}{f(u)^{B}}\log\frac{1}{f(u)} = o(u)$ we obtain
\[
	\sum_{j=0}^{\log x}(2^{j}\tau^{1+\delta})^{-\delta f(u_{0}(x))^{B}} \ll \frac{1}{\delta f\bigl(u_{0}(x)\bigr)^{B}}\exp\bigl(-\delta f\bigl(u_{0}(x)\bigr)^{B}(1+\delta)u_{0}(x)\bigr) \ll_{\delta} 1.
\]
Next, from the mean value theorem and the fact that $f'$ is increasing, we see that 
\[
	f\bigl((1+\delta)u_{0}(x)\bigr)\log x \ge f\bigl(u_{0}(x)\bigr)\log x + \delta u_{0}(x)f'\bigl(u_{0}(x)\bigr)\log x = f\bigl(u_{0}(x)\bigr)\log x - \delta u_{0}(x),
\]
whence
\begin{align*}
	S_{2, b, ii} 	&\ll_{\delta} \exp\Bigl(-\omega(x) + \epsilon(1+\delta)u_{0}(x)\Bigr)\omega(x)^{C} \\
				&= \exp\Bigl(-\omega(x) + (1+\delta)[A(1+\delta')^{B}+\delta]f\bigl(u_{0}(x)\bigr)^{B}u_{0}(x)\Bigr)\omega(x)^{C}.
\end{align*}

If $\tilde{\delta} > 0$ is a fixed positive number, we conclude that upon choosing $\eps$, $\delta$, and $\delta'$ sufficiently small, 
\[
	\psi_{\MP}(x)-x \ll_{\eps, \delta, \delta'} x\exp\Bigl(-\omega(x) + (A+\tilde{\delta})f\bigl(u_{0}(x)\bigr)^{B}u_{0}(x)\Bigr)\omega(x)^{C}.
\]
This concludes the proof.
\end{proof}

For the Riemann zeta function, the asymptotically best known zero-free region and zero-density estimate near the $1$-line take the shape
\begin{gather}
	\sigma > 1-\eta(\abs{t}) = 1 - \frac{c}{(\log \abs{t})^{2/3}(\log_{2} \abs{t})^{1/3}}, \quad \abs{t} \ge t_{0} \label{Riemann zero-free}\\
	N(\sigma, T) \ll T^{A(1-\sigma)^{3/2}}(\log T)^{C}. \label{Riemann zero-density}
\end{gather}
The main tool in proving these estimates is the Vinogradov--Korobov estimate $\zeta(s) \ll \abs{t}^{a(1-\sigma)^{3/2}}(\log\abs{t})^{2/3}$ for the Riemann zeta function near the $1$-line. At the time of writing this paper, the best known constants in \eqref{Riemann zero-free} and \eqref{Riemann zero-density} seem to be (see \cite[Theorem 1.3]{Be24} and \cite[Theorem 1.1]{Be23} respectively)
\[
	c = \frac{1}{48.0718}, \quad A = 57.8875, \text{ with } C\approx 10.95.
\] 
The zero-free region \eqref{Riemann zero-free} yields
\[
	\omega(x) \asymp u_{0}(x) \asymp \frac{(\log x)^{3/5}}{(\log_{2} x)^{1/5}},
\]
so that, with $B=3/2$, $f\bigl(u_{0}(x)\bigr)^{B}u_{0}(x) \asymp (\log_{2} x)^{-1/2}\ll 1$. Hence in this situation, the effect of the constant $C$ is more important than that of the constant $A$. As far as the author is aware, the current lowest value of $C$ is due to Pintz (\cite[Theorem 1]{Pi22}), who showed $N(\sigma, T) \ll T^{86(1-\sigma)^{3/2}}(\log T)^{6}$, yielding
\[
	\psi(x)-x \ll x\e^{-\omega_{\eta}(x)}\frac{(\log x)^{18/5}}{(\log_{2} x)^{6/5}}, \quad \text{with $\eta$ as in \eqref{Riemann zero-free}}.
\]
It is an interesting question whether one can obtain a ``log-free $B=3/2$-type'' zero-density estimate. The original $B=3/2$-type estimate is due to Hal\'asz and Tur\'an (\cite[Theorem 3]{HT69}), who showed that 
\[
	N(\sigma, T) \ll T^{(1-\sigma)^{3/2}(\log\frac{1}{1-\sigma})^{3}}.
\]
They said ``We laid no stress on obtaining the smallest possible exponent of $\log 1/(1-\sigma)$''. Interestingly, if one could lower this exponent $D$ of $\log 1/(1-\sigma)$ from $D=3$ to any value below $3/2$, it would outperform in this application any estimate of the form \eqref{Riemann zero-density} with $B=3/2$, $C>0$, yielding (along the same lines as the proof of Theorem \ref{refinement JPR})
\[
	\psi(x)-x \ll x\e^{-\omega_{\eta}(x)}\exp\bigl(O((\log_{2} x)^{D-1/2})\bigr).
\]
Finally, note that
\[	
	N(\sigma, T) \ll T^{(1-\sigma)^{3/2}(\log\frac{1}{1-\sigma})^{D}} \implies N\biggl(1-\frac{1}{(\log T)^{2/3}(\log_{2} T)^{2D/3}}, T\biggr) \ll 1,
\]
so that $D<1/2$ would improve the Vinogradov--Korobov zero-free region \eqref{Riemann zero-free} asymptotically. Therefore, a mere optimization of the Hal\'asz--Tur\'an argument will likely not yield improvements beyond $D=1/2$.


\section{The Diamond--Montgomery--Vorhauer function}
\label{sec: DMV function}
We now turn our attention to the construction of Beurling zeta functions having infinitely many zeros on a prescribed contour. A common strategy for constructing Beurling number systems satisfying certain properties, is to first construct a system \emph{in the extended sense} satisfying those properties, and to then approximate it by an actual (discrete) system. By a Beurling number system in the extended sense we mean a pair of right-continuous non-decreasing functions $\MB = (\Pi(x), N(x))$, supported on $[1,\infty)$, and satisfying
\begin{equation}
\label{theorem arithmetic}
	\Pi(1) = 0, \quad N(1)=1, \quad \dif N = \exp^{\ast}(\dif\Pi) \coloneqq \sum_{n=0}^{\infty}\frac{\dif\Pi^{\ast n}}{n!}.
\end{equation}
Here $\dif \Pi^{\ast n}$ denotes the $n$-th convolution power with respect to multiplicative convolution:
\[
	(\dif F\ast \dif G)(E) \coloneqq \iint_{uv\in E}\dif F(u)\dif G(v),
\]
and $\dif F^{\ast 0} = \delta_{1}$, the unit point mass at $x=1$, which is the unit for convolution.
If $\dif F = \sum_{n}f(n)\delta_{n}$, $\dif G(x) = \sum_{n}g(n)\delta_{n}$, then $\dif F\ast\dif G = \sum_{n}h(n)\delta_{n}$, where $h = f\ast g$ is the usual Dirichlet convolution. 

The last equation in \eqref{theorem arithmetic} should be thought of as the ``fundamental theorem of arithmetic'' for the system $\MB$, connecting the ``primes'' with the ``integers''. On the Mellin transform side this translates to the ``Euler product formula''
\[
	\zeta_{\MB}(s) \coloneqq \int_{1^{-}}^{\infty}x^{-s}\dif N(x) = \exp\biggl(\int_{1}^{\infty}x^{-s}\dif\Pi(x)\biggr).
\]
We also define the Chebyshev function of the system as $\psi_{\MB}(x) \coloneqq \int_{1}^{x}\log u\dif\Pi(u)$, and note that $\int_{1}^{\infty}x^{-s}\dif\psi_{\MB}(x) = -\frac{\zeta_{\MB}'(s)}{\zeta_{\MB}(s)}$.
We refer to \cite[Chapters 2--3]{DZ16} for more background on these notions. A simple but fundamental example of a system in the extended sense is given by
\[
	\Pi(x) = \Li(x) \coloneqq \int_{1}^{x}\frac{1 - u^{-1}}{\log u}\dif u, \quad N(x) = x, \quad (x\ge1).
\]
Its associated Chebyshev and zeta functions are $\psi_{\MB}(x) = x - \log x -1$ and $\zeta_{\MB}(s) = \frac{s}{s-1}$.

One way to introduce zeros to the zeta function is by considering, for fixed $\rho \in \C$, the function
\[
	\Li(x^{\rho}) \coloneqq \int_{1}^{x}\frac{u^{\rho-1}-u^{-1}}{\log u}\dif u.
\]
One may check that 
\[
	\int_{1}^{\infty}x^{-s}\dif\Li(x^{\rho}) = \log\frac{s}{s-\rho} \quad \text{for } \Re s > \Re \rho
\]
($\log$ denoting the principal branch of the logarithm), so that subtracting $\Li(x^{\rho})$ from $\Pi(x)$ introduces a zero at $s=\rho$ for the zeta function. Note also that 
\[
	\Li(x^{\rho}) \sim \frac{x^{\rho}}{\rho \log x}, \quad \int_{1}^{x}\log u \dif\Li(u^{\rho}) = \frac{x^{\rho}-1}{\rho}-\log x \sim \frac{x^{\rho}}{\rho}, \quad x\to\infty,
\]
which aligns with the idea that a zeta zero at $s=\rho$ should roughly correspond to a perturbation to $\psi_{\MB}(x)$ of the form $-\frac{x^{\rho}}{\rho}$ via the Riemann--von Mangoldt explicit formula \eqref{explicit Riemann--von Mangoldt}. The terms $\Li(x^{\rho})$ were used in \cite{BDR25} to construct Beurling zeta functions having zeros and poles precisely at prescribed \emph{finite} sets. This construction seems not feasible if one wants to have \emph{infinitely many} zeros. The issue is guaranteeing that $\Pi(x)$, or equivalently, $\psi_{\MB}(x)$, is non-decreasing. Indeed, an infinite sequence of zeros $(\rho_{k})_{k}$ introduced in this way contributes to the derivative $\psi_{\MB}'(x)$ as $-\sum_{k}(x^{\rho_{k}-1}-x^{-1})$, and it is unclear how to handle this in general divergent series\footnote{For the ordinary primes one has the exact formula
\[
	\frac{\psi(x^{+})+\psi(x^{-})}{2} = x-\sum_{\rho}\frac{x^{\rho}}{\rho} - \log2\pi -\frac{1}{2}\log\Bigl(1-\frac{1}{x^{2}}\Bigr),
\]
the sum being over the non-trivial zeros of the Riemann zeta function. Hence, at least as distributions one has $-\sum_{\rho}x^{\rho-1} = \sum_{n}\Lambda(n)\delta_{n}(x) + \frac{1}{x^{3}-x^{2}}-1$, so one expects positive divergence in $-\sum_{\rho}x^{\rho-1}$ when $x$ is a prime power and massive cancellation otherwise. However, such a behavior seems very hard to ``cook up'' for a general well-chosen sequence of zeros $(\rho_{k})_{k}$, without knowing in advance that they come from a positive prime measure!}.

\medskip

In the construction of Diamond, Montgomery, and Vorhauer \cite{DMV}, these issues are dealt with in a very ingenious manner. Instead of the simple $\frac{s-\rho}{s}$, another function is used to introduce a zero at $s=\rho$, namely
\begin{equation}
\label{G}
	G(z) \coloneqq 1- \frac{\e^{-z} - \e^{-2z}}{z}.
\end{equation}
This is an entire function with zeros at $z=0$ and $z = z_{\pm j} = x_{j} \pm \I y_{j}$, $j\in \N_{>0}$, where (see \cite[Lemmas 1--2]{DMV})
\begin{equation}
\label{zeros G}
	x_{j} < -\frac{\log(\pi j/2)}{2}, \quad x_{j} = -\frac{\log(\pi j)}{2} + O(j^{-1/2}), \quad \pi j < y_{j} < \pi(j+1).
\end{equation}
Hence for $a>0$, $G(a(s-\rho))$ has a zero at $s=\rho$ (and infinitely many to the left of $\rho$).

In \cite{DMV}, it is shown that $\log G(z)$ can be expressed as a Mellin transform:
\begin{equation}
\label{Mellin G}
	\log G(z) = -\int_{1}^{\infty}g(x)x^{-z-1}\dif x,
\end{equation}
where $g$ is a function supported on $[\e, \infty)$. As will become evident later on, the fact that $g$ is supported away from $x=1$ is crucial.
Below we collect the other necessary properties of $g$ required for our analysis. They are proved in \cite[Section 4]{DMV}; see also \cite[Section 17.6]{DZ16}.
\begin{lemma}
\label{function g}
The function $g$ is nonnegative, continuous on $(\e^{2}, \infty)$ and continuously differentiable on $(\e^{4}, \infty)$. On the interval $(\e^{m}, \e^{m+1})$ ($m \in \N_{>0}$), $g(x)$ is a polynomial in $\log x$ of degree at most $m-1$. Furthermore, $g(x)$ satisfies the estimates
\begin{align*}
	g(x)\log x 	&= 1 + O(x^{-1/2}), \quad &&x\ge \e^{2}\\
	\bigl(g(x)\log x\bigr)'	&\ll x^{-3/2}, \quad &&x\ge \e^{5}.
\end{align*}
\end{lemma}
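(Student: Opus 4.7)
My first step is to convert \eqref{Mellin G} into an explicit series representation of $g$. Starting from
\[
	-\log G(z) = \sum_{n\ge 1}\frac{F(z)^{n}}{n}, \qquad F(z) \coloneqq \frac{\e^{-z}-\e^{-2z}}{z} = \int_{1}^{2}\e^{-tz}\dif t,
\]
and observing that the substitution $x = \e^{t}$ rewrites $F(z) = \int_{\e}^{\e^{2}}x^{-z-1}\dif x$ as the Mellin transform of $\mathbf{1}_{[\e,\e^{2}]}$, the Mellin convolution theorem identifies $F(z)^{n}$ with the Mellin transform of the $n$-fold multiplicative convolution $\mathbf{1}_{[\e,\e^{2}]}^{\ast n}$. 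Matching with \eqref{Mellin G} then yields
\[
	g(x) = \sum_{n=1}^{\infty}\frac{\mathbf{1}_{[\e,\e^{2}]}^{\ast n}(x)}{n},
\]
which is manifestly nonnegative and supported on $[\e,\infty)$.

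Next I pass to additive coordinates via $u = \log x$. Writing $\phi_{n}(u) \coloneqq \mathbf{1}_{[\e,\e^{2}]}^{\ast n}(\e^{u})$, multiplicative convolution becomes ordinary convolution, so $\phi_{n}$ is the cardinal B-spline density of the sum of $n$ independent $U[1,2]$ random variables. From standard B-spline theory it is supported on $[n,2n]$, is a polynomial of degree $n-1$ on each integer subinterval of its support, and is globally of smoothness class $C^{n-2}$. The support property means only indices $n\le m$ contribute to $g$ on $(\e^{m},\e^{m+1})$, making $g$ a polynomial in $\log x$ of degree at most $m-1$ there. For continuity on $(\e^{2},\infty)$: the only summand failing to be $C^{0}$ is $n=1$, which is supported where $u\le 2$ and hence absent beyond $\e^{2}$. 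For $C^{1}$-regularity on $(\e^{4},\infty)$: the only additional problematic summand is $n=2$ (the tent function), supported where $u\le 4$.

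For the asymptotic estimate, the plan is Mellin inversion applied to the identity
\[
	\frac{G'(z)}{G(z)} = \int_{\e}^{\infty} g(x)\log x \cdot x^{-z-1}\dif x
\]
obtained by differentiating \eqref{Mellin G} in $z$. The local expansion $G(z) = \tfrac{3}{2}z + O(z^{2})$ yields $G'(z)/G(z) = 1/z + O(1)$ near $z = 0$, so this Mellin transform has a simple pole of residue $1$ at the origin, matching the pole of $\e^{-z}/z = \int_{\e}^{\infty}x^{-z-1}\dif x$ exactly. Consequently $G'(z)/G(z) - \e^{-z}/z$ represents the Mellin transform of $g(x)\log x - \mathbf{1}_{[\e,\infty)}(x)$ and extends holomorphically across $\Re z = 0$. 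Shifting the inversion contour from $\Re z = \delta > 0$ down to $\Re z = -1/2$ and using $\abs{x^{z}} = x^{-1/2}$ there will produce the claimed $O(x^{-1/2})$ bound. The derivative estimate is handled analogously: after noting that $\int_{\e}^{\infty}(g(x)\log x)' x^{-z}\dif x = z\, G'(z)/G(z)$, one pushes the contour one step further, to $\Re z = -3/2$.

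The main obstacle I anticipate is the contour shift. Moving past $\Re z = 0$ requires showing that $G$ has no zeros in the strip $-1/2\le \Re z < 0$---a sharpening of the a priori bound $x_{1} < -\tfrac{1}{2}\log(\pi/2)$ from \eqref{zeros G}---and producing an integrable upper bound for $\abs{G'/G}$ along $\Re z = -1/2$. Concretely, verifying $\abs{F(z)} < 1$ throughout the critical half-strip calls for direct and rather delicate estimates on the specific function $F(z) = (\e^{-z} - \e^{-2z})/z$; for the derivative bound one must additionally control the residue contributions of the zero-pair $z_{\pm 1}$ encountered when shifting to $\Re z = -3/2$. It is at this stage that the particular shape of $G$---especially the fact that $\supp g$ stays bounded away from $x=1$---does the real work.
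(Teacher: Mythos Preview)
The paper does not give its own proof; it defers to \cite[Section 4]{DMV} and \cite[Section 17.6]{DZ16}. Your plan---expand $-\log G=\sum_{n\ge1}F^{n}/n$, identify the summands with multiplicative convolutions of $\mathbf{1}_{[\e,\e^{2}]}$, pass to additive variables to read off the B-spline structure, and then obtain the asymptotics by Mellin inversion of $G'/G$---is exactly the route taken in those references, and the structural half (nonnegativity, piecewise-polynomial form, regularity) is correctly argued.

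There is, however, a real gap in your plan for the contour shift. You propose to secure the zero-free strip $\{-\tfrac12\le\Re z<0\}$ by showing $\abs{F(z)}<1$ there. This is false already on the real axis: since $F(0)=1$ and $F'(0)=-\tfrac32$, one has $F(x)>1$ for small negative $x$, and in fact $F(-\tfrac12)=2(\e-\e^{1/2})\approx 2.14$. So $\abs{F}<1$ cannot be used to exclude zeros in the strip. What is actually needed is the sharper statement $x_{1}\le-\tfrac12$, which genuinely goes beyond the bound $x_{1}<-\tfrac12\log(\pi/2)\approx-0.226$ recorded in \eqref{zeros G}; in \cite{DMV} this is obtained by a direct analysis of the equation $F(z)=1$ (equivalently, an argument-principle computation on the boundary of the half-strip), not via $\abs{F}<1$. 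Without this input the shift to $\Re z=-\tfrac12$ is unjustified, and the same issue recurs for the derivative bound: each residue picked up when moving to $\Re z=-\tfrac32$ contributes a term of size $x^{x_{j}-1}$, which is $O(x^{-3/2})$ only once $x_{j}\le-\tfrac12$ is known. (For $j\ge2$ this already follows from \eqref{zeros G}, but for $j=1$ it does not; note also that several zero pairs, not just $z_{\pm1}$, lie in the strip $-\tfrac32<\Re z<-\tfrac12$.)
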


Let us now consider arbitrary $\rho\in\C$ and $a>0$. Multiplying a zeta function with the factor $G(a(s-\rho))$ introduces a zero at $s=\rho$, and corresponds with adding to $\Pi(x)$ the term
\[
	-\int_{1}^{x}\frac{g(u^{1/a})}{a}u^{\rho-1}\dif u.
\]
Note that this is nonzero only if $x>\e^{a}$.
\begin{lemma}
\label{estimate Ik}
Assume that $\rho = \beta+\I\gamma$ with $\beta \in [1/2, 1)$ and $\gamma\ge1$ and let $a\ge4$. Then for $x\ge \e^{5a}$ we have
\[
	-\int_{1}^{x}\frac{g(u^{1/a})}{a}(\log u)u^{\beta-1}\cos(\gamma\log u)\dif u = -\frac{x^{\beta}\sin(\gamma\log x)}{\gamma} 
	+ O\biggl(\frac{x^{\beta-1/(2a)}}{\gamma}+\frac{x^{\beta}}{\gamma^{2}}+\e^{5a\beta}\biggr),
\]
while for $\e^{a} \le x < \e^{5a}$, 
\[
	-\int_{1}^{x}\frac{g(u^{1/a})}{a}(\log u)u^{\beta-1}\cos(\gamma\log u)\dif u = O\biggl(\frac{x^{\beta}}{\gamma}\biggr).
\] 
Here, the implicit $O$-constants are absolute; they do not depend on $\rho$ or $a$.
\end{lemma}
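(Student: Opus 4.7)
The plan is to exploit the two estimates from Lemma \ref{function g} to extract an explicit main term. Writing $\phi(u) \coloneqq g(u^{1/a})\log(u^{1/a}) = g(u^{1/a})(\log u)/a$, the integrand becomes $-\phi(u)u^{\beta-1}\cos(\gamma\log u)$. Since $g$ vanishes on $[1,\e)$, $\phi$ is supported on $[\e^{a},\infty)$; it is bounded by an absolute constant on $[\e^{a},\e^{2a})$ (where $g$ is constant, being a degree-$0$ polynomial in $\log v$ on $(\e,\e^{2})$); and for $u \ge \e^{2a}$ one has $\phi(u) = 1 + \psi(u)$ with $\psi(u) \ll u^{-1/(2a)}$. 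The derivative bound in Lemma \ref{function g} together with the chain rule yields $\psi'(u) \ll a^{-1}u^{-1-1/(2a)}$ for $u \ge \e^{5a}$.

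For the main case $x \ge \e^{5a}$, I would split the integral as $\int_{\e^{a}}^{\e^{5a}} + \int_{\e^{5a}}^{x}$. The first piece is trivially $O(\e^{5a\beta})$ via the crude bound $\|\phi\|_{\infty} \ll 1$, fitting the allowed error. On the second piece, decompose $\phi = 1 + \psi$. The ``$1$'' contribution is computed exactly via the antiderivative
\[
	V_{0}(u) = \frac{u^{\beta}\bigl(\beta\cos(\gamma\log u) + \gamma\sin(\gamma\log u)\bigr)}{\beta^{2}+\gamma^{2}} = \frac{u^{\beta}\sin(\gamma\log u)}{\gamma} + O\Bigl(\frac{u^{\beta}}{\gamma^{2}}\Bigr),
\]
producing the stated main term $-x^{\beta}\sin(\gamma\log x)/\gamma$ together with admissible errors $O(x^{\beta}/\gamma^{2})$ and $O(\e^{5a\beta})$ (the latter absorbing the lower boundary contribution at $u=\e^{5a}$). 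The ``$\psi$'' contribution $\int_{\e^{5a}}^{x}\psi\dif V_{0}$ is handled by integration by parts, differentiating $\psi$ alone (not $\psi u^{\beta-1}$): the boundary terms have sizes $O(x^{\beta-1/(2a)}/\gamma)$ and $O(\e^{5a\beta}/\gamma)$, and the remaining integral $\int V_{0}\psi'\dif u$ is bounded by $\int_{\e^{5a}}^{x}(u^{\beta}/\gamma)\cdot a^{-1}u^{-1-1/(2a)}\dif u \ll x^{\beta-1/(2a)}/\gamma$, since $a\beta-1/2 \ge 3/2$.

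For the secondary range $\e^{a} \le x < \e^{5a}$, the derivative bound on $g$ is unavailable, so I would instead perform a Riemann--Stieltjes integration by parts. With $G(u) \coloneqq \int_{\e^{a}}^{u}v^{\beta-1}\cos(\gamma\log v)\dif v$, the exact antiderivative gives $|G(u)| \ll u^{\beta}/\gamma$, and
\[
	\int_{\e^{a}}^{x}\phi(u)u^{\beta-1}\cos(\gamma\log u)\dif u = \phi(x)G(x) - \int_{\e^{a}}^{x}G(u)\dif\phi(u).
\]
The boundary term is $O(x^{\beta}/\gamma)$, and the Stieltjes integral is bounded by $(\max|G|)\cdot V_{[\e^{a},\e^{5a}]}(\phi) \ll x^{\beta}/\gamma$, because $g$ is piecewise polynomial of bounded degree on each $(\e^{m},\e^{m+1})$ for $m\le 4$ and continuous from $\e^{2}$ onward, so $\phi$ has absolutely bounded total variation on $[\e^{a},\e^{5a}]$. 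The principal obstacle throughout is careful bookkeeping: tracking how each power of $1/\gamma$ is generated by the oscillation (the $1/\gamma^{2}$ in the main-term error coming precisely from the $\beta$-summand of $V_{0}$), how the $\e^{5a\beta}$ term absorbs all $x$-independent boundary contributions, and how the lack of differentiability of $g$ below $\e^{5}$ is circumvented in the small-$x$ range via the bounded-variation argument.
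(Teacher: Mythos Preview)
Your argument is correct and follows essentially the same route as the paper. For $x\ge\e^{5a}$ both you and the paper split at $\e^{5a}$, bound the short piece trivially by $O(\e^{5a\beta})$, and treat the long piece by integration by parts using the two estimates from Lemma~\ref{function g}; your explicit decomposition $\phi=1+\psi$ with the exact antiderivative $V_{0}$ is precisely how one executes the paper's terse instruction ``integrate by parts''. For $\e^{a}\le x<\e^{5a}$ the paper instead splits into the subintervals $[\e^{ma},\e^{(m+1)a}]$ and integrates by parts on each, using that $g(u^{1/a})\log u^{1/a}$ is a fixed polynomial $Q_{m}$ of degree $\le m$ in $\log u^{1/a}$; your single Riemann--Stieltjes integration by parts together with the bounded-variation observation is an equivalent (and slightly slicker) packaging of the same idea, since the total variation of $\phi$ on $[\e^{a},\e^{5a}]$ is exactly the sum of the variations of the $Q_{m}$ on $[m,m+1]$ plus the jump at $\e^{a}$, all of which are absolute constants.
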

\begin{proof}
If $x\ge \e^{5a}$, we split the integral into the part from $1$ tot $\e^{5a}$ and the part from $\e^{5a}$ to $x$. The first integral is $O(\e^{5a\beta})$, while the second can be estimated via integrating by parts and using the estimates from Lemma \ref{function g}.

If $x < \e^{5a}$, we split the integral in integrals over the intervals $[\e^{ma}, \min(\e^{(m+1)a}, x)]$, $1\le m \le 4$. On each such interval, we write $g(u^{1/a})\log u^{1/a} = P_{m}(\log u^{1/a})$, where $P_{m}$ is a polynomial of degree at most $m$ (cfr.\ Lemma \ref{function g}), and integrate by parts.
\end{proof}
If we now consider an infinite sequence of zeros $(\rho_{k})_{k}$ and a corresponding sequence $(a_{k})_{k}$ increasing to infinity, then we have no issues of convergence in the contributions to $\psi_{\MB}(x)$ and $\psi_{\MB}'(x)$: in the series
\[
	-\sum_{k}\int_{1}^{x}\frac{g(u^{1/a_{k}})}{a_{k}}(\log u)u^{\rho_{k}-1}\dif u \quad\text{and}\quad -\sum_{k}\frac{g(x^{1/a_{k}})}{a_{k}}(\log x)x^{\rho_{k}-1},
\]
only the terms with $\e^{a_{k}} \le x$ are nonzero, owing to the support of $g$. So, for each fixed $x$, only finitely many terms contribute to the series.


\section{Proof of Theorem \ref{sharpness theorem}}
\label{Proof of sharpness theorem}
\subsection{The setup}
We recall that we assume that $f$ is an (eventually) $C^{1}$, decreasing, and strictly convex function satisfying $f(u) \succ 1/u$, which is either regularly varying of index $-\alpha$ ($0<\alpha\le1$) or slowly varying, in which case it also satisfies \eqref{decrease |f'|}. We introduce the following parameters:
\begin{equation}
	\gamma_{k} \coloneqq \exp(4^{k}), \quad \ell_{k} \coloneqq \frac{1}{f(\log\gamma_{k})}, \quad \beta_{k} \coloneqq 1-\frac{1}{\ell_{k}}, \quad a_{k} \coloneqq \Bigl(1-\frac{1}{\ell_{k}}\Bigr)\log\gamma_{k}.\label{parameters}
\end{equation}
We will define a zeta function having a bunch of zeros near $\beta_{k}+\I\gamma_{k}$, for every integer $k\ge k_{0}$ with $k_{0}$ sufficiently large. We set
\begin{equation}
\label{zeros}
	M_{k} \coloneqq \biggl\lfloor \exp\biggl(\frac{a_{k}}{\ell_{k}}-\sqrt{\frac{a_{k}}{\ell_{k}}}\biggr) \biggr\rfloor, \quad \gamma_{k,m}\coloneqq \gamma_{k} + c_{k,m}\frac{m\gamma_{k}}{M_{k}}, \quad \rho_{k,m} \coloneqq \beta_{k} + \I \gamma_{k,m},
\end{equation}
for  $m=0, 1, \dotsc, M_{k}$. Here, $c_{k,m}$ are real numbers with $c_{k,m} = 1+o(1)$ which will be determined later. From the outset we note that $a_{k}/\ell_{k} \sim \log\gamma_{k}f(\log\gamma_{k}) \to \infty$ and $M_{k} \le \gamma_{k}^{1/\ell_{k}}$.

Finally we define
\begin{align}
	\zeta_{\mc}(s) 	&\coloneqq \frac{s}{s-1}\prod_{k=k_{0}}^{\infty}\prod_{m=0}^{M_{k}}G\bigl(a_{k}(s-\rho_{k,m})\bigr)G\bigl(a_{k}(s-\overline{\rho_{k,m}})\bigr); \label{product}\\
	\Pi_{\mc}(x) 	&\coloneqq \Li(x) -2\sum_{k=k_{0}}^{\infty}\sum_{m=0}^{M_{k}}\int_{1}^{x}\frac{g(u^{1/a_{k}})}{a_{k}}u^{-1/\ell_{k}}\cos(\gamma_{k,m}\log u)\dif u. \label{Pic}
\end{align}
Note that in view of \eqref{Mellin G}, 
\[
	\zeta_{\mc}(s) = \exp\int_{1}^{\infty}x^{-s}\dif\Pi_{\mc}(x).
\]

\subsection{The template Beurling number system}\label{template system}
First of all we show that $\Pi_{\mc}(x)$ is non-decreasing. For this it suffices to show that $\psi_{\mc}'(x) \ge 0$, where $\psi_{\mc}(x)\coloneqq \int_{1}^{x}\log u\dif\Pi_{\mc}(u)$. Let $x\ge\e^{a_{k_{0}}}$, and let $K$ be such that $\e^{a_{K}} \le x < \e^{a_{K+1}}$. Then 
\[
	\psi_{\mc}'(x) = 1-\frac{1}{x} - 2\sum_{k=k_{0}}^{K}\sum_{m=0}^{M_{k}}\bigl(g(x^{1/a_{k}})\log x^{1/a_{k}}\bigr)\cos(\gamma_{k,m}\log x) x^{-1/\ell_{k}},
\]
so that from $g(u)\log u \ll 1$ (see Lemma \ref{function g}),
\begin{align*}
	\psi_{\mc}'(x) 	&\ge 1-\frac{1}{x} +O\Biggl(\sum_{k=k_{0}}^{K}M_{k}x^{-1/\ell_{k}}\Biggr) \\
				&= 1 - \frac{1}{x} +
				 O\Biggl(\sum_{k=k_{0}}^{K}\exp\biggl\{\biggl(1-\frac{1}{\sqrt{a_{k}/\ell_{k}}}\biggr)\frac{a_{k}}{\ell_{k}} - \frac{a_{K}}{\ell_{k}}\biggr\}\Biggr).
\end{align*}
The term corresponding with $k=K$ is $\exp(-\sqrt{a_{K}/\ell_{K}}) = o(1)$. For the terms with $k_{0}\le k < K$, we note that $a_{k}\sim 4^{k}$, so that
\begin{align*}
	\sum_{k=k_{0}}^{K}\exp\biggl\{\biggl(1-\frac{1}{\sqrt{a_{k}/\ell_{k}}}\biggr)\frac{a_{k}}{\ell_{k}} - \frac{a_{K}}{\ell_{k}}\biggr\}
	&\le o(1) + \sum_{k=k_{0}}^{K-1}\exp\Bigl(-\frac{4^{K}}{2\ell_{k}}\Bigr) \\
	&\le o(1) + \sum_{k=k_{0}}^{K-1}\exp\Bigl(-\frac{4^{K-k}}{2}\cdot 4^{k}f(4^{k})\Bigr) = o(1),
\end{align*}
as $4^{k}f(4^{k}) \to \infty$.
Hence, $\psi_{\mc}'(x) \ge 0$ for all $x\ge1$ if $k_{0}$ is sufficiently large.
\medskip

If we now set $\dif N_{\mc}(x) \coloneqq \exp^{\ast}(\dif\Pi_{\mc}(x))$, $N_{\mc}(x) = \int_{1^{-}}^{x}\dif N_{\mc}$, we have that $(\Pi_{\mc}(x), N_{\mc}(x))$ is a Beurling number system in the extended sense with associated Beurling zeta function $ \zeta_{\mc}(s)$.

\subsection{Bounding $\zeta_{\mc}(s)$}\label{bounding zetac}
Next we consider the infinite product for $\zeta_{\mc}$. Let $s=\sigma+\I t$ with $\sigma \ge1/2$ and $t\ge0$. Recalling the definition of $G$ \eqref{G} we have
\begin{align}
	G\bigl(a_{k}(s-\rho_{k,m})\bigr) &\ll 1, \quad &&\text{if } a_{k}\abs{s-\rho_{k,m}} \le 1;\\
	G\bigl(a_{k}(s-\rho_{k,m})\bigr) &=1 + O\biggl(\frac{1+\exp\bigl(2a_{k}(1-\sigma-\frac{1}{\ell_{k}})\bigr)}{a_{k}\abs{s-\rho_{k,m}}}\biggr); &&
\end{align}
Let $t>0$ be fixed. By the rapid increase of $\gamma_{k}$ there is at most one $k\ge k_{0}$ with $\gamma_{k}/2 \le t \le 3\gamma_{k}$. For all other values of $k$ one has $\abs{s-\rho_{k,m}} \gg \gamma_{k}$.

Suppose now $\gamma_{k'}/2 \le t \le 3\gamma_{k'}$ for some (unique) $k'\ge k_{0}$. There is at most one value of $m$, $0\le m\le M_{k'}$ with $\gamma_{k'} + (m-1/2)\gamma_{k'}/M_{k'} \le t < \gamma_{k'} + (m+1/2)\gamma_{k'}/M_{k'}$, which we denote by $m'$. If there is no such $m$, we set $m'=-1$ or $m'=M_{k'}+1$ depending on whether $t<\gamma_{k'}-\gamma_{k'}/(2M_{k'})$ or $t\ge \gamma_{k'} + (M_{k'}+1/2)\gamma_{k'}/M_{k'}$. Recalling \eqref{zeros} with $c_{k',m}=1+o(1)$, we have for $m\neq m'$ that 
$\abs{\gamma_{m,k'} - t} \gg \abs{m-m'}\gamma_{k'}/M_{k'}$. Since $\e^{a_{k}}M_{k}/\gamma_{k} \le 1$ we obtain
\[
	\sum_{m\neq m'}^{M_{k'}}\frac{\e^{a_{k'}}M_{k'}}{a_{k'}\abs{m-m'}\gamma_{k'}} + \sum_{k\neq k'}\frac{\e^{a_{k}}M_{k}}{a_{k}\gamma_{k}} 
	\ll \frac{1}{\ell_{k'}} + \sum_{k=k_{0}}^{\infty}4^{-k}  \ll 1.
\]
Hence the product \eqref{product} converges locally uniformly on $\Re s \ge 1/2$, and we obtain the bounds
\begin{align}
	\zeta_{\mc}(s) 	&\ll 1 + \frac{1}{|s-1|}, \quad &&\text{if } \forall k, m: \abs{t-\gamma_{k, m}} > \frac{\gamma_{k}}{2M_{k}}; \label{bound zetac 1}\\
	\zeta_{\mc}(s) 	&\ll \Bigl(1+t^{2(1-1/\ell_{k})(1-\sigma)}\Bigr)\min\biggl\{1, \frac{1}{\abs{t-\gamma_{k,m}}\log t}\biggr\}, \quad &&\text{if } \abs{t-\gamma_{k, m}} \le \frac{\gamma_{k}}{2M_{k}}. \label{bound zetac 2}
\end{align}
In particular, if $T\ge 1$,
\begin{equation}
\label{L1 bound zetac}
	\int_{T}^{2T}\abs{\zeta_{\mc}(\sigma+\I t)}\dif t \ll T, \quad \text{uniformly for } \sigma \ge 1/2.
\end{equation}

\subsection{The location of the zeros}\label{location zeros}
We now consider the zeros of $\zeta_{\mc}$. By construction, the zeros $\rho_{k,0}$ and $\overline{\rho_{k,0}}$ lie on the contour $\sigma=1-\eta(\abs{t})$. As $\eta$ is decreasing, the zeros $\rho_{k,m}$ and $\overline{\rho_{k,m}}$ with $m>0$ lie to the left of this contour. We claim that all others lie also to the left of this contour. In view of \eqref{zeros G}, \eqref{parameters}, \eqref{zeros}, and \eqref{product}, every such zero in the region $\{ s: \Re s \ge 1/2, \Im s \ge 0\}$ is of the form
\[
	\rho_{k, m, \pm j} \coloneqq 1-\frac{1}{\ell_{k}} + \frac{x_{j}}{a_{k}} + \I\Bigl(\gamma_{k,m}\pm \frac{y_{j}}{a_{k}}\Bigr), \quad k\ge k_{0}, \quad 0\le m\le M_{k}, \quad 1\le j \le J_{k},
\]
with $J_{k} \sim (1/\pi)\gamma_{k}^{(1-2/\ell_{k})(1-1/\ell_{k})}$.
As $\eta$ is decreasing, it suffices to check that $\Re \rho_{k, 0,-j} < 1 - f(\log (\Im \rho_{k, 0, -j}))$. For $j \le J_{k}$ we have
\[
	\log\Bigl(\gamma_{k} - \frac{y_{j}}{a_{k}}\Bigr) > \log\Bigl(\gamma_{k} - \frac{\pi j}{a_{k}}\Bigr) \ge \lambda \log \gamma_{k}	
\]
say, where $\lambda = 1- \frac{1}{a_{k}\log\gamma_{k}}$.
From the assumption $f(\lambda u) \sim \lambda^{-\alpha}f(u)$ with $0<\alpha \le 1$ or $f(\lambda u) \sim f(u)$, and the fact that this relation holds uniformly for $\lambda$ in compact subsets of $(0,\infty)$, we see that, if $k_{0}$ is sufficiently large in terms of $f$ and $k\ge k_{0}$,
\[
	f(\log(\Im\rho_{k,-j})) < f(\lambda \log\gamma_{k}) < \lambda^{-2}f(\log \gamma_{k}) \le\Bigl(1+\frac{4}{a_{k}\log\gamma_{k}}\Bigr)\frac{1}{\ell_{k}}.
\]
It now indeed follows that 
\[
	\Re \rho_{k, 0, -j} \le 1- \frac{1}{\ell_{k}} -\frac{\log(\pi j/2)}{2a_{k}} \le 1- \frac{1}{\ell_{k}} - \frac{4}{a_{k}\ell_{k}\log\gamma_{k}} < 1-f(\log (\Im\rho_{k, 0, -j})).
\]

Finally we note the following concerning the number of zeros of $\zeta_{\mc}$. For $1/2 < \sigma < 1- 1/\ell_{k}$, the number of zeros in the rectangle $[\sigma, 1] \times \I[\gamma_{k}/2, 3\gamma_{k}]$ is 
\[
	\sim \frac{M_{k}}{\pi}\gamma_{k}^{2(1-\sigma-1/\ell_{k})(1-1/\ell_{k})}
\]
for $k$ sufficiently large. In particular, for every fixed $\delta > 0$,
\begin{equation}
\label{number of zeros}
	N(\sigma, T) = \Omega\bigl(T^{2(1-\sigma) - \delta}\bigr), \quad \text{uniformly for } \sigma \in [1/2, 1-\delta/2].
\end{equation}

\subsection{The asymptotics of $\psi_{\mc}$}
\label{asymptotics psic}
We now analyze the asymptotic behavior of $\psi_{\mc}(x) = \int_{1}^{x}\log u \dif \Pi_{\mc}(u)$. Suppose $x$ is large and fixed, and let $K\ge k_{0}$ be such that $\e^{a_{K}} \le x < \e^{a_{K+1}}$. From \eqref{Pic} and the fact that $\supp g \subseteq [\e, \infty)$, we get that
\[
	\psi_{\mc}(x) = x - 1 - \log x - 2\sum_{k=k_{0}}^{K}\sum_{m=0}^{M_{k}}\int_{\e^{a_{k}}}^{x}\bigl(g(u^{1/a_{k}})\log u^{1/a_{k}}\bigr)u^{\beta_{k}-1}\cos(\gamma_{k,m}\log u)\dif u.
\]
If $k\le K-2$ then $\e^{5a_{k}} \le \e^{5a_{K}/16} \le x^{5/16}$ and $M_{k} \le \e^{a_{k}/\ell_{k}} \le x^{1/16}$, so that by Lemma \ref{estimate Ik}
\begin{multline}
	\psi_{\mc}(x) = x - 1 - \log x - 2\sum_{k=k_{0}}^{K-2}\sum_{m=0}^{M_{k}}\Biggl(\frac{x^{\beta_{k}}\sin(\gamma_{k,m}\log x)}{\gamma_{k,m}} 
	+ O\biggl\{\frac{x^{\beta_{k}}}{\gamma_{k}}\biggl(x^{-1/(2a_{k})}+\frac{1}{\gamma_{k}}\biggr)\biggr\}\Biggr) \\
	+ O\biggl(Kx^{6/16}+M_{K-1}\frac{x^{\beta_{K-1}}}{\gamma_{K-1}} + M_{K}\frac{x^{\beta_{K}}}{\gamma_{K}}\biggr).
	\label{psic sum}
\end{multline}
At first, note that $K\ll \log_{2} x$, $\log \gamma_{K-1} = (1/16)\log\gamma_{K+1} \ge (1/17)\log x$ say, $\log \gamma_{K} \ge (1/5)\log x$, and $M_{K} \le \e^{a_{K}/\ell_{K}} \le x^{1/\ell_{K}}$. Hence the final error term above is $O(x^{16/17})$.

Next, recalling \eqref{definition h} and \eqref{parameters}, 
\[
	\frac{x^{\beta_{k}}}{\gamma_{k}} = x\exp\Bigl(-\frac{1}{\ell_{k}}\log x -\log\gamma_{k}\Bigr) = x\exp\bigl(-h(\log\gamma_{k}, x)\bigr).
\] 
The function $h(u, x)$ attains its minimum $\omega(x)$ when $u=u_{0}(x)$. Let  
\begin{equation}
\label{definition kx}
	\kappa_{x} = \frac{\log u_{0}(x)}{\log 4}, \quad k_{x} = 
		\begin{cases}
			\lfloor \kappa_{x}\rfloor		&\text{if } \kappa_{x} - \lfloor \kappa \rfloor \le 1/2, \\
			\lceil \kappa_{x} \rceil		&\text{if } \kappa_{x} - \lfloor \kappa \rfloor > 1/2.
		\end{cases}
\end{equation}
In view of \eqref{asymp u-} or \eqref{u- o} we obtain $u_{0}(x) \ll f(u_{0}(x))\log x = o(\log x)$, so that in particular, $k_{x} < K-2$ if $x$ is sufficiently large. Note that $\log\gamma_{\kappa_{x}} = u_{0}(x)$ (with the obvious definition of $\kappa_{x}$ is not an integer) and 
\begin{align*}
	k > k_{x} &\implies \log\gamma_{k} \ge 2u_{0}(x), \\
	k < k_{x} &\implies \log\gamma_{k} \le \frac{u_{0}(x)}{2}.
\end{align*}

With this information, we can estimate the terms in \eqref{psic sum} with $k\neq k_{x}$: from Lemma \ref{technical} (with $\lambda=2$) it follows that there exists some $\mu>0$ so that
\[
	M_{k}\frac{x^{\beta_{k}}}{\gamma_{k}} \le x\exp\bigl(-h(\log\gamma_{k}, \log x) + f(\log\gamma_{k})\log \gamma_{k}\bigr) \ll x\exp\bigl(-\omega(x) - \mu u_{0}(x)\bigr), \quad k\neq k_{x}.
\]
Next, as $a_{k_{x}} \asymp \log \gamma_{k_{x}} \asymp u_{0}(x) = o(\log x)$, $x^{-1/(2a_{k_{x}})} \to 0$. Hence it follows that
\begin{equation}
\label{psic intermediate}
	\psi_{\mc}(x) = x -2\sum_{m=0}^{M_{k_{x}}}\Biggl(\frac{x^{\beta_{k_{x}}}\sin(\gamma_{k_{x}, m}\log x)}{\gamma_{k_{x}, m}}+ o\Bigl(\frac{x^{\beta_{k_{x}}}}{\gamma_{k_{x}}}\Bigr)\Biggr) + o\bigl(x\e^{-\omega(x)}\bigr).
\end{equation}
We now show that the above som has the desired order from Theorem \ref{sharpness theorem}, for suitably chosen arbitrarily large values of $x$.

Let $k\ge k_{0}$ be large and fixed and set
\begin{equation}
\label{xk}
	x_{k} \coloneqq u_{0}^{-1}(\log \gamma_{k}) = u_{0}^{-1}(4^{k}),
\end{equation}
where $u_{0}^{-1}$ denotes the inverse function of the increasing function $u_{0}$. With this definition we obtain $u_{0}(x_{k}) = \log\gamma_{k}$ and $k_{x_{k}} = \kappa_{x_{k}} = k$. Next we set $\tilde{x}_{k} = \e^{-\epsilon_{k}}x_{k}$, where $0 \le \epsilon_{k} < 2\pi/\gamma_{k}$ is chosen so that $\sin(\gamma_{k}\log \tilde{x}_{k}) = (-1)^{k}$. Finally, recalling $\gamma_{k,m} = \gamma_{k} + c_{k,m}m\gamma_{k}/M_{k}$, we choose $c_{k, m} = 1+o(1)$ so that $\gamma_{k, m}\log \tilde{x}_{k}-\gamma_{k}\log \tilde{x}_{k} \in 2\pi\Z$. With these choices we obtain from \eqref{psic intermediate} that
\[
	\psi_{\mc}(\tilde{x}_{k}) = \tilde{x}_{k} + 2(-1)^{k}\sum_{m=0}^{M_{k}}\frac{(\tilde{x}_{k})^{\beta_{k}}}{\gamma_{k,m}}(1+o(1)) + o\bigl(\tilde{x}_{k}\e^{-\omega(\tilde{x}_{k})}\bigr).
\]
The error we make by replacing $x_{k}$ with $\tilde{x}_{k}$ is negligible: 
\begin{gather*}
	\frac{(\tilde{x}_{k})^{\beta_{k}}}{\gamma_{k,m}} \asymp \frac{(x_{k})^{\beta_{k}}}{\gamma_{k}} = x_{k}\exp\bigl(-\omega(x_{k})\bigr) \asymp \tilde{x}_{k}\exp\bigl(-\omega(x_{k})\bigr); \\
	\abs{\omega(\tilde{x}_{k}) - \omega(x_{k})} \le \abs*{\log\frac{\tilde{x}_{k}}{x_{k}}} \ll \frac{1}{\gamma_{k}}, \quad \text{see \eqref{omega slow variation}};\\
	f\bigl(u_{0}(\tilde{x}_{k})\bigr) \sim (1+\alpha)\frac{\omega(\tilde{x}_{k})}{\log\tilde{x}_{k}} \sim (1+\alpha)\frac{\omega(x_{k})}{\log x_{k}} \sim f\bigl(u_{0}(x_{k})\bigr);
\end{gather*}
where in the last equation we again used \eqref{asymp u-} if $f$ is regularly varying of index $-\alpha$ and \eqref{u- o} if $f$ is slowly varying (in which case we set $\alpha=0$ above).
Finally, for every $\delta>0$ it holds for sufficiently large $k$ that
\begin{align*}
	M_{k} 	&\ge \exp\Bigl((1-\delta)\frac{\log\gamma_{k}}{\ell_{k}}\Bigr) = \exp\bigl((1-\delta)u_{0}(x_{k})f\bigl(u_{0}(x_{k})\bigr)\bigr) \\
			&\ge \exp\bigl((1-\delta)u_{0}(\tilde{x}_{k})f\bigl(u_{0}(x_{k})\bigr)\bigr) \ge \exp\bigl((1-2\delta)u_{0}(\tilde{x}_{k})f\bigl(u_{0}(\tilde{x}_{k})\bigr)\bigr).
\end{align*}
Putting all this together we obtain for every $\delta>0$:
\begin{align}
\label{asymptotic psic +}
	&\limsup_{k\to \infty} \frac{\psi_{\mc}(\tilde{x}_{2k}) - \tilde{x}_{2k}}{\tilde{x}_{2k}\exp\bigl\{-\omega(\tilde{x}_{2k}) + (1-\delta)f\bigl(u_{0}(\tilde{x}_{2k})\bigr)u_{0}(\tilde{x}_{2k})\bigr\}} = \infty,\\
\label{asymptotic psic -}	
	&\liminf_{k\to \infty} \frac{\psi_{\mc}(\tilde{x}_{2k+1}) - \tilde{x}_{2k+1}}{\tilde{x}_{2k+1}\exp\bigl\{-\omega(\tilde{x}_{2k+1}) + (1-\delta)f\bigl(u_{0}(\tilde{x}_{2k+1})\bigr)u_{0}(\tilde{x}_{2k+1})\bigr\}} = -\infty.
\end{align}

\subsection{The discrete approximation and conclusion of the proof}\label{discrete approximation}
To conclude the proof, we will approximate the system in the extended sense $(\Pi_{\mc}, N_{\mc})$ by an actual discrete system, by which we mean a sequence of Beurling generalized primes $\MP = (p_{1}, p_{2}, \dotsc)$ for which $\psi_{\MP}(x)$, $N_{\MP}(x)$, and $\zeta_{\MP}(s)$ are sufficiently close to $\psi_{\mc}(x)$, $N_{\mc}(x)$, and $\zeta_{\mc}(s)$ respectively. For it, we employ the random approximation procedure from \cite{BV24}, summarized in the following
\begin{theorem}[{\cite[Theorem 1.2]{BV24}}]
\label{discretization procedure}
Let $F$ be a non-decreasing right-continuous function tending to $\infty$ with $F(1)=0$ and $F(x) \ll x/\log x$. Then there exists a sequence of generalized primes $\MP = (p_{j})_{j\ge1}$ such that $\abs{\pi_{\MP}(x) - F(x)} \le 2$ and such that for any real $t$ and any $x\ge1$
\begin{equation}
\label{probabilistic approximation}
	\abs[\Bigg]{\sum_{p_{j}\le x}p_{j}^{-\I t} - \int_{1}^{x}u^{-\I t}\dif F(u)} \ll \sqrt{x} + \sqrt{\frac{x\log(\abs{t}+1)}{\log(x+1)}}.
\end{equation}
\end{theorem}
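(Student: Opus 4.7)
The approach I would pursue is to build $\MP$ in two stages: first fix a deterministic quantile skeleton that guarantees the sharp counting-function bound, then randomize positions within the ``bins'' of this skeleton to produce cancellation in the oscillatory sum.

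\textbf{Deterministic skeleton.} Set $q_{j} \coloneqq \inf\{x\ge1: F(x)\ge j\}$, so that the sequence $(q_{j})_{j\ge1}$ already satisfies $\abs{\#\{j:q_{j}\le x\}-F(x)}\le 1$. Cluster the indices into disjoint ``bins'' $I_{j}$ by taking $I_{j}$ to be an interval around $q_{j}$ chosen so that the $I_{j}$'s are pairwise disjoint and $\int_{I_{j}}\dif F = 1$. Because $F(x) \ll x/\log x$, the length of $I_{j}$ is of order at least $q_{j}/F(q_{j}) \gg \log q_{j}$, so such a partition exists for $j$ large enough; the finitely many remaining primes can be placed arbitrarily without spoiling the estimates.

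\textbf{Random refinement.} Let $p_{j}$ be independent random variables with $p_{j}$ distributed on $I_{j}$ according to $\dif F / \int_{I_{j}}\dif F$. With this choice $\mathbb{E}[p_{j}^{-\I t}] = \int_{I_{j}}u^{-\I t}\,\dif F(u)$, so the centered sum
\[
    S(x,t) \coloneqq \sum_{p_{j}\le x} p_{j}^{-\I t} - \int_{1}^{x}u^{-\I t}\,\dif F(u)
\]
has mean $O(1)$ (only the unique bin straddling $x$ contributes) and variance bounded by $F(x)+O(1) \ll x/\log x$, since each summand has modulus $\le 1$. Applying Hoeffding or Bernstein concentration to the real and imaginary parts separately gives
\[
    \mathbb{P}\bigl(\abs{S(x,t)} > \lambda\sqrt{F(x)}\bigr) \le 4\e^{-c\lambda^{2}}.
\]

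\textbf{Uniformity in $(x,t)$.} To upgrade from pointwise to uniform, cover $(x,t)\in[1,X]\times[-T,T]$ by a mesh of spacing $\asymp 1/T$ in both coordinates; the Lipschitz bound $\abs{\tfrac{\dif}{\dif u}u^{-\I t}} = \abs{t}/u$ and the trivial counting estimate together control the variation of $S$ between adjacent mesh points. Since the mesh has cardinality polynomial in $XT$, taking $\lambda \asymp \sqrt{\log(XT)}$ and unioning yields
\[
    \abs{S(x,t)}\ll \sqrt{F(x)\log(XT)} \ll \sqrt{\frac{x\log(\abs{t}+1)}{\log(x+1)}}
\]
uniformly, with probability $1-o(1)$ in any dyadic range. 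A Borel--Cantelli argument over dyadic scales then selects a \emph{deterministic} realization of $(p_{j})$ satisfying the stated bound for all $x$ and $t$. The counting bound $\abs{\pi_{\MP}(x)-F(x)}\le 2$ is automatic from the construction, since each $I_{j}$ carries exactly one prime.

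\textbf{Main obstacle.} The delicate point is reconciling the sharp deterministic $O(1)$ counting bound with the square-root oscillation estimate: insisting that each bin $I_{j}$ carry exactly one prime preserves the pointwise count, but leaves just enough freedom in the placement within $I_{j}$ for the variance argument to succeed. A second, subtler issue is that a naive union bound would produce $\sqrt{x\log(XT)}$, whereas the stated bound contains $\sqrt{\log(\abs{t}+1)/\log(x+1)}$; the $\log x$ in the denominator emerges from the variance $F(x)\asymp x/\log x$ being smaller than the summation range, so one must organize the mesh so its cardinality scales with $T$ but saturates in $X$. Marrying these two features -- few summands and fine uniformity in $t$ -- is the main technical work.
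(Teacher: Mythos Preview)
The paper does not prove this statement: Theorem~\ref{discretization procedure} is quoted verbatim from \cite{BV24} and applied as a black box in Subsection~\ref{discrete approximation}, so there is no in-paper argument to compare against.

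That said, your outline is essentially the strategy of \cite{BV24}: quantile skeleton, independent random placement within the resulting intervals so that each summand is centred, Hoeffding-type subgaussian tails, a net argument in $(x,t)$, and Borel--Cantelli to extract a deterministic realisation. Two small corrections to your write-up. First, the bins can simply be taken as the quantile intervals $(q_{j-1},q_{j}]$ themselves; the hypothesis $F(x)\ll x/\log x$ is \emph{not} needed to make the partition exist, and your claimed lower bound on bin length is false in general (nothing prevents $F$ from jumping). That hypothesis enters only at the end, to convert the variance proxy $F(x)$ into the factor $x/\log(x+1)$ appearing in \eqref{probabilistic approximation}. Second, your diagnosis of the obstacle is right but the suggested mesh spacing ``$\asymp 1/T$ in both coordinates'' is not: in the $x$-variable one only needs increments of $F$ of size $O(1)$ (so $\asymp F(X)$ grid points), while in the $t$-variable the Lipschitz constant is $\ll x$, forcing spacing $\asymp x^{-1}$. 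The decoupling of the $\log x$ and $\log(\abs{t}+1)$ contributions in \eqref{probabilistic approximation} then comes from running the union bound over dyadic blocks $x\sim 2^{n}$, $\abs{t}\sim 2^{m}$ separately, so that the log of the mesh cardinality in each block is $O(n+m)$ rather than a global $\log(XT)$; the standalone $\sqrt{x}$ term absorbs the $n$-part.
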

We apply this theorem with $F(x) = \Pi_{\mc}(x)$. Hence we obtain a sequence of Beurling primes $\MP$ with $\pi_{\MP}(x) = \Pi_{\mc}(x) + O(1)$. From this we deduce
\begin{align}
	\Pi_{\MP}(x) 	&= \pi_{\MP}(x) + O(\sqrt{x}) = \Pi_{\mc}(x) + O(\sqrt{x}), \label{Pi-pi}\\
	\psi_{\MP}(x) 	&= \int_{1}^{x}\log u\dif\Pi_{\MP}(u) = \psi_{\mc}(x) + O(\sqrt{x}\log x)\nonumber,
\end{align}
so that the second assertion of Theorem \ref{sharpness theorem} follows immediately from \eqref{asymptotic psic +} and \eqref{asymptotic psic -}.

\medskip

Next we write
\begin{align*}
	\log\zeta_{\MP}(s) - \log\zeta_{\mc}(s) 	&= \int_{1}^{\infty}x^{-s}\bigl(\dif\pi_{\MP}(x)-\dif\Pi_{\mc}(x)\bigr) + \int_{1}^{\infty}x^{-s}\bigl(\dif\Pi_{\MP}(x)-\dif\pi_{\MP}(x)\bigr)\\
									&\eqqcolon Z_{1}(s) + Z_{2}(s).
\end{align*}
The estimate \eqref{probabilistic approximation} and a short calculation show that $Z_{1}(s)$ defines an analytic function in the region $\sigma = \Re s > 1/2$, where it satisfies the estimate
\[
	Z_{1}(\sigma+\I t) \ll \frac{1}{\sigma-1/2} + \sqrt{\frac{\log(\abs{t}+1)}{\sigma-1/2}}.
\]
Similarly, as $\dif\Pi_{\MP}(x)-\dif\pi_{\MP}(x)$ is a positive measure and in view of \eqref{Pi-pi}, also $Z_{2}(s)$ is analytic for $\sigma>1/2$ and satisfies $Z_{2}(\sigma+\I t)\ll (\sigma-1/2)^{-1}$. Combining both estimates yields
\begin{equation}
\label{comparison zetaP zetac}
	\zeta_{\MP}(s) = \zeta_{\mc}(s)\e^{Z_{1}(s)+Z_{2}(s)} = \zeta_{\mc}(s)\exp\biggl(O\biggl(\frac{1}{\sigma-1/2} + \sqrt{\frac{\log(\abs{t}+1)}{\sigma-1/2}}\biggr)\biggr), \quad \sigma>1/2.
\end{equation}
We conclude that, apart from the pole at $s=1$, $\zeta_{\MP}(s)$ has analytic continuation to the half-plane $\sigma>1/2$, where it has the same zeros as $\zeta_{\mc}(s)$, so that also the third assertion of Theorem \ref{sharpness theorem} is proved.

\medskip

It remains to show that $N_{\MP}(x) = Ax + O_{\eps}(x^{1/2+\eps})$, for some $A>0$ and every $\eps>0$. To show this we will use Perron inversion as in \cite[Section 17.8]{DZ16}:
\[
	N_{\MP}(x) \le \int_{x}^{x+1}N_{\MP}(u)\dif u = \frac{1}{2\pi\I}\int_{2-\I\infty}^{2+\I\infty}\zeta_{\MP}(s)\frac{(x+1)^{s+1}-x^{s+1}}{s(s+1)}\dif s.
\]
We shift the contour of integration to $\Re s = \sigma_{0}\coloneqq 1/2 + (\log x)^{-1/3}$. Letting $A\coloneqq \res_{s=1}\zeta_{\MP}(s)$,  we obtain
\[
	N_{\MP}(x) \le Ax + \frac{A}{2} + \frac{1}{2\pi\I}\int_{\sigma_{0}-\I\infty}^{\sigma_{0}+\I\infty}\zeta_{\mc}(s)\e^{Z_{1}(s)+Z_{2}(s)}\frac{(x+1)^{s+1}-x^{s+1}}{s(s+1)}\dif s.
\]
We split the integral in dyadic intervals. Employing \eqref{L1 bound zetac} and the bounds
\begin{gather*}
	\abs[\big]{\e^{Z_{1}(s)+Z_{2}(s)}} \le \exp\Bigl\{c\bigl((\log x)^{1/3} + (\log T)(\log x)^{1/6}\bigr)\Bigr\}, \quad s = \sigma_{0} + \I t, \quad T\le \abs{t} \le 2T \\
	\abs[\big]{(x+1)^{s+1}-x^{s+1}} \ll \begin{cases}
			Tx^{1/2}\exp((\log x)^{2/3})	&\text{if } T\le x,\\
			x^{3/2}\exp((\log x)^{2/3})		&\text{if } T > x, 
			\end{cases}		
\end{gather*}
a short calculation yields
\[
	N_{\MP}(x) \le Ax + O\Bigl(x^{1/2}\exp\bigl(c'(\log x)^{2/3}\bigr)\Bigr), \quad \text{for some } c'>0.
\]
The other inequality follows similarly upon writing $N_{\MP}(x) \ge \int_{x-1}^{x}N_{\MP}(u)\dif u$.
This concludes the proof of Theorem \ref{sharpness theorem}.

\subsection{Variant 1: slower growth of $\zeta$}

In the above example, the additional factor $\exp\bigl((1-\delta)f\bigl(u_{0}(x)\bigr)u_{0}(x)\bigr)$ was obtained by, roughly speaking, putting as much zeros as possible in the rectangle $[1/2, \beta_{k}]\times \I[\gamma_{k}, 2\gamma_{k}]$. The cost for doing this is the large growth of the resulting zeta function: in bounding $\zeta_{\mc}(s)$ pointwise in \eqref{bound zetac 2}, we could only barely improve upon the ``trivial'' convexity bound \eqref{convexity bound}. In particular, the analogue of the Lindel\"of hypothesis fails for these zeta functions.

If one is content with $\psi_{\MP}(x)$ oscillating of size $x\e^{-\omega(x)}$ only, then a simpler construction with fewer zeros suffices, resulting in a zeta function with much better pointwise bounds. In the next section we will employ this simpler construction to shed some light on the relationship between zero-free regions on the one hand, and pointwise bounds and the size of ``clusters'' of zeros of the zeta function on the other hand.

Instead of \eqref{parameters}, \eqref{product}, \eqref{Pic}, we now set
\begin{gather}
	\ell_{k} \coloneqq 4^{k}, \quad \beta_{k} \coloneqq 1-1/\ell_{k}, \quad \gamma_{k} \coloneqq \exp\bigl(f^{-1}(1/\ell_{k})\bigr), \quad \rho_{k} \coloneqq \beta_{k}+\I\gamma_{k}, \quad a_{k} \coloneqq \ell_{k}; \label{parameters1}\\
	\zeta_{1}(s) \coloneqq \frac{s}{s-1}\prod_{k=k_{0}}^{\infty}G\bigl(a_{k}(s-\rho_{k})\bigr)G\bigl(a_{k}(s-\overline{\rho_{k}})\bigr); \label{zeta1}\\
	\Pi_{1}(x) \coloneqq \Li(x) -2\sum_{k=k_{0}}^{\infty}\int_{1}^{x}\frac{g(u^{1/a_{k}})}{a_{k}}u^{-1/\ell_{k}}\cos(\gamma_{k}\log u)\dif u. \label{Pi1}
\end{gather}

The method of Subsection \ref{template system} no longer applies to see that $\Pi_{1}(x)$ is non-decreasing. However, the non-decreasingness can be proven in exactly the same way as \cite[Lemmas 17.20--17.21]{DZ16}, the only difference being the value of the $\gamma_{k}$. However, they only occur in the cosine, which is estimated trivially by $1$. Letting $\dif N_{1}(x) \coloneqq \exp^{\ast}(\dif\Pi_{1}(x))$, $N_{1}(x) = \int_{1^{-}}^{x}\dif N_{1}$, we have that $(\Pi_{1}(x), N_{1}(x))$ is a Beurling number system in the extended sense with associated zeta function
\[
	\exp\biggl(\int_{1}^{\infty}x^{-s}\dif\Pi_{1}(x)\biggr) = \zeta_{1}(s).
\]

Similarly as in Subsection \ref{bounding zetac} one has
\begin{align}
	\zeta_{1}(s) 	&\ll 1 + \frac{1}{|s-1|}, \quad &&\text{if } \forall k: \abs{t-\gamma_{k}} > \gamma_{k}/2; \nonumber\\
	\zeta_{1}(s) 	&\ll \biggl(1+\exp\biggl(\frac{2(1-\sigma)}{f(\log\gamma_{k})}\biggr)\biggr)\min\biggl\{1, \frac{f(\log\gamma_{k})}{\abs{s-\rho_{k}}}\biggr\}, &&\nonumber \\
				&\ll \biggl(1+\exp\biggl(\frac{2(1+o(1))(1-\sigma)}{f(\log t)}\biggr)\biggr)\min\biggl\{1, \frac{f(\log t)}{\abs{s-\rho_{k}}}\biggr\}, \quad &&\text{if } \abs{t-\gamma_{k}} \le \gamma_{k}/2. \label{bound zeta1}
\end{align}
In particular, as $1/f(u) = o(u)$, we have $\zeta_{1}(\sigma+ \I t) \ll_{\eps} \abs{t}^{\eps}$, for every $\eps>0$, so the analogue of the Lindel\"of hypothesis holds for $\zeta_{1}$.

The analysis of the zeros is similar to Subsection \ref{location zeros}. Regarding the oscillation of $\psi_{1}(x) \coloneqq \int_{1}^{x}\log u\dif\Pi_{1}(u)$, one can in this case actually show that
\begin{equation}
\label{asymptotic psi1}
	\limsup_{x\to\infty}\frac{\psi_{1}(x) -x}{x\exp\bigl(-\omega(x)\bigr)} = 2, \quad \liminf_{x\to\infty}\frac{\psi_{1}(x) -x}{x\exp\bigl(-\omega(x)\bigr)} = -2.
\end{equation}
This is done analogously as in Subsection \ref{asymptotics psic}, except that one sets
\[
	\kappa_{x} = \frac{\log\frac{1}{f(u_{0}(x))}}{\log 4}, \quad k_{x} = 
		\begin{cases}
			\lfloor \kappa_{x}\rfloor		&\text{if } \kappa - \lfloor \kappa_{x} \rfloor \le 1/2, \\
			\lceil \kappa_{x} \rceil		&\text{if } \kappa - \lfloor \kappa_{x} \rfloor > 1/2.
		\end{cases}
\]
Now one has $u_{0}(x) = f^{-1}(4^{-\kappa_{x}})$, and
\begin{align*}
	k > k_{x} &\implies \log\gamma_{k} \ge f^{-1}\biggl[\frac{1}{2}f\bigl(u_{0}(x)\bigr)\biggr] \ge (2+o(1))u_{0}(x), \\
	k < k_{x} &\implies \log\gamma_{k} \le f^{-1}\Bigl[2f\bigl(u_{0}(x)\bigr)\Bigr]  \le \biggl(\frac{1}{2}+o(1)\biggr)u_{0}(x);
\end{align*}
where the right-most equalities follow from the regular or slow variation of $f$ and the fact that $f^{-1}$ is decreasing. Hence one can similarly invoke Lemma \ref{technical} to deduce \eqref{asymptotic psi1}.

Finally, the fact that $N_{1}(x) = Ax + O_{\eps}(x^{1/2+\eps})$ for some $A>0$ and every $\eps>0$ can be shown similarly as in Subsection \ref{discrete approximation}, or alternatively from applying a Schnee--Landau-type theorem using the bound $\zeta_{1}(\sigma+\I t) \ll_{\eps}\abs{t}^{\eps}$.

\subsection{Variant 2: $o(x\e^{-\omega(x)})$ remainder}
In the first construction, the location of the zeros $\rho_{k,m} = \beta_{k} + \I \gamma_{k,m}$ was chosen in such a way that the different oscillations $x^{\rho_{k,m}}/\rho_{k,m}$ ($m=0, \dotsc, M_{k}$, $k$ fixed) in the explicit Riemann--von Mangoldt formula \eqref{explicit Riemann--von Mangoldt} exhibit constructive interference, producing the additional factor $\exp\bigl((1-\delta)f\bigl(u_{0}(x)\bigr)u_{0}(x)\bigr)$. It is also possible to introduce destructive interference, and actually obtain a non-trivial saving over $x\e^{-\omega(x)}$ in the error term of the PNT.

We will show this when $f(u)=u^{-\alpha}$ for some $\alpha \in (0,1)$. The parameters $\gamma_{k}, \ell_{k}, \beta_{k}, a_{k}$ remain the same as in \eqref{parameters}. Next we set $M_{k}=1$ and
\begin{equation}
\label{gammak1}
	\gamma_{k,0} = \gamma_{k}, \quad \gamma_{k,1} = \gamma_{k} + \frac{\pi}{\log x_{k}},  
\end{equation}
where $x_{k}$ is defined as in \eqref{xk}, so
\[
	x_{k} = u_{0}^{-1}(4^{k}) = \exp\biggl(\frac{4^{(1+\alpha)k}}{\alpha}\biggr),	
\]
as $u_{0}(x) = (\alpha\log x)^{\frac{1}{1+\alpha}}$ in this case. The Beurling number system is then defined via 
\begin{align*}
	\zeta_{2}(s) 	&\coloneqq \frac{s}{s-1}\prod_{k=k_{0}}^{\infty}\prod_{m=0}^{1}G\bigl(a_{k}(s-\rho_{k,m})\bigr)G\bigl(a_{k}(s-\overline{\rho_{k,m}})\bigr); \\
	\Pi_{2}(x) 		&\coloneqq \Li(x) -2\sum_{k=k_{0}}^{\infty}\sum_{m=0}^{1}\int_{1}^{x}\frac{g(u^{1/a_{k}})}{a_{k}}u^{-1/\ell_{k}}\cos(\gamma_{k,m}\log u)\dif u. 
\end{align*}

The analysis of the Subsections \ref{template system}, \ref{bounding zetac}, \ref{location zeros}, and \ref{discrete approximation} goes through without difficulties. The key difference with the first construction lies only in the analysis if $\psi_{2}(x)\coloneqq \int_{1}^{x}\log u\dif\Pi_{2}(u)$.

Let $x$ be large, and set again $k\coloneqq k_{x}$ as in \eqref{definition kx}. The analysis in Subsection \ref{asymptotics psic} can be followed to obtain
\begin{multline*}
	\psi_{2}(x) = x - 2x^{\beta_{k}}\biggl(\frac{\sin(\gamma_{k, 0}\log x)}{\gamma_{k, 0}} + \frac{\sin(\gamma_{k, 1}\log x)}{\gamma_{k, 1}}\biggr) \\
	+ O\biggl(\frac{x^{\beta_{k}-1/(2a_{k})}}{\gamma_{k}} + \frac{x^{\beta_{k}}}{(\gamma_{k})^{2}} + x\exp\bigl(-\omega(x) - \mu u_{0}(x)\bigr)\biggr),
\end{multline*}
for some $\mu>0$. Recall from \eqref{definition kx} that $k = k_{x} = \kappa_{x}+\vartheta_{x}$ with $\abs{\vartheta_{x}} \le 1/2$, so $\log\gamma_{k} = 4^{\vartheta_{x}}4^{\kappa_{x}} \asymp u_{0}(x) \asymp (\log x)^{\frac{1}{1+\alpha}}$. Also $\log x/a_{k} \asymp \log x / \log\gamma_{k} \asymp (\log x)^{\frac{\alpha}{1+\alpha}}$, and in any case $x^{\beta_{k}}/\gamma_{k} \ll x\e^{-\omega(x)}$. Hence we see that
\[
	\psi_{2}(x) = x - \frac{2x^{\beta_{k}}}{\gamma_{k}}\bigl(\sin(\gamma_{k, 0}\log x) + \sin(\gamma_{k, 1}\log x)\bigr) 
	+ O\Bigl\{x\exp\Bigl(-\omega(x) - c(\log x)^{\frac{\alpha}{1+\alpha}}\Bigr)\Bigr\},
\]
for some $c>0$. We will show that the second term above is $\ll x\e^{-\omega(x)}(\log x)^{-\frac{1}{2(1+\alpha)}}$. When $x$ is close to $x_{k} = u_{0}^{-1}(\log\gamma_{k})$, $x^{\beta_{k}}/\gamma_{k}$ is close to $x\e^{-\omega(x)}$, and we will obtain cancellation from the two sines, which are exactly out of phase for $x=x_{k}$, see \eqref{gammak1}. When $x$ is further removed from $x_{k}$, we cannot extract cancellation from the sum of sines anymore, and we will instead use that $x^{\beta_{k}}/\log \gamma_{k} = x\exp(-h(\log\gamma_{k}, x))$ becomes smaller compared to $x\e^{-\omega(x)}$, as $\log\gamma_{k} = u_{0}(x_{k})$ is then further removed from $u_{0}(x)$.

We start by writing $x = (x_{k})^{1+\nu}$. As $u_{0}(x) = 4^{\kappa_{x}} = 4^{k-\vartheta_{x}} = 4^{-\vartheta_{x}}u_{0}(x_{k})$, we have that 
\[	
	x = \exp\biggl(\frac{1}{\alpha}\bigl(u_{0}(x_{k})4^{-\vartheta_{x}}\bigr)^{1+\alpha}\biggr) = (x_{k})^{4^{-(1+\alpha)\vartheta_{x}}}, 
\]
so $4^{-\frac{1+\alpha}{2}} - 1 \le \nu \le 4^{\frac{1+\alpha}{2}} - 1$. We have
\begin{equation}
\label{u0x}
	u_{0}\bigl((x_{k})^{1+\nu}\bigr) = (1+\nu)^{\frac{1}{1+\alpha}}u_{0}(x_{k}) = (1+\nu)^{\frac{1}{1+\alpha}}\log \gamma_{k},
	 \quad f\bigl[u_{0}\bigl((x_{k})^{1+\nu}\bigr)\bigr] = (1+\nu)^{-\frac{\alpha}{1+\alpha}}f\bigl[u_{0}(x_{k})\bigr].
\end{equation}
If $\abs{\nu}$ is larger than some fixed $\delta>0$, then we can invoke Lemma \ref{technical} with $\lambda = (1+\delta)^{\frac{1}{1+\alpha}}$ or $\lambda = (1-\delta)^{-\frac{1}{1+\alpha}}$ to see that there exists a constant $\mu(\delta)>0$ for which 
\begin{equation*}
	\frac{x^{\beta_{k}}}{\gamma_{k}} = x\exp\bigl(-h(\log\gamma_{k}, x)\bigr) \le x\exp\bigl(-\omega(x) - \mu(\delta)u_{0}(x)\bigr).
\end{equation*}
Hence we may assume that $\abs{\nu} \le \delta$ for some small fixed $\delta>0$.

With \eqref{gammak1} and some trigonometry one finds
\[
	\sin(\gamma_{k,0}\log x)+\sin(\gamma_{k,1}\log x) = \bigl(1-\cos(\pi\nu)\bigr)\sin(\gamma_{k}\log x) -\sin(\pi\nu)\cos(\gamma_{k}\log x) \ll \abs{\nu}.
\]
On the other hand, for small $\nu$, in view of \eqref{u0x} and the fact that $f\bigl(u_{0}(x)\bigr)\log x = u_{0}(x)/\alpha$ we obtain
\begin{align*}
	\frac{x^{\beta_{k}}}{\gamma_{k}} 
		&= x\exp\biggl(-\omega(x) - \biggl(\frac{(1+\nu)^{-\frac{\alpha}{1+\alpha}}-1}{\alpha}+ (1+\nu)^{\frac{1}{1+\alpha}}-1\biggr)u_{0}(x)\biggr)\\
		&= x\exp\biggl(-\omega(x) - \biggl(\frac{\nu^{2}}{2(1+\alpha)}+O(\nu^{3})\biggr)u_{0}(x)\biggr)\\
		&\le x\exp\biggl(-\omega(x) - \frac{\nu^{2}u_{0}(x_{k})}{4(1+\alpha)}\biggr),
\end{align*}
provided that $\delta$ is sufficiently small, as $u_{0}(x) = \bigl(1+O(\abs{\nu})\bigr)u_{0}(x_{k})$ in view of \eqref{u0x}.
Combining both we find that 
\[
	\frac{\psi_{2}(x) - x}{x\e^{-\omega(x)}} \ll \exp\biggl(\log\abs{\nu}-\frac{u_{0}(x_{k})\nu^{2}}{4(1+\alpha)} \biggr).
\]
The function $\log\abs{\nu} - u_{0}(x_{k})\nu^{2}/(4(1+\alpha))$ is maximal when $\abs{\nu} = \sqrt{2(1+\alpha)/u_{0}(x_{k})}$, at which it takes the value $\log\sqrt{2(1+\alpha)/u_{0}(x_{k})}-1/2$, so that
\[
	\frac{\psi_{2}(x) - x}{x\e^{-\omega(x)}} \ll \exp\Biggl(\log\sqrt{\frac{2(1+\alpha)}{u_{0}(x_{k})}}\Biggr) \asymp (\log x_{k})^{-\frac{1}{2(1+\alpha)}} \asymp (\log x)^{-\frac{1}{2(1+\alpha)}}.
\]	
Summarizing we obtain
\begin{proposition}
\label{sharpness Revesz2}
For every $\alpha \in (0,1)$ there exists a Beurling number system $(\MP, \MN)$ satisfying
\begin{enumerate}
	\item $N_{\MP}(x) = Ax + O_{\eps}(x^{1/2+\eps})$ for some $A>0$ and every $\eps>0$;
	\item $\zeta_{\MP}(s)$ has infinitely many zeros on the contour $\sigma = 1- (\log\abs{t})^{-\alpha}$, and none to the right of this contour;
	\item the error term in the PNT satisfies
	\[ 
		\psi_{\MP}(x)-x \ll x\e^{-\omega(x)}(\log x)^{-\frac{1}{2(1+\alpha)}}. 
	\]	
\end{enumerate}
\end{proposition}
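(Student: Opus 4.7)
The plan is to use the Variant 2 construction just set up, with parameters from \eqref{parameters} and the pair of frequencies $\gamma_{k,0}, \gamma_{k,1}$ from \eqref{gammak1} chosen to produce destructive interference at $x = x_{k}$, and to verify the three assertions in turn.

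For assertion (1), I would apply the discretization Theorem \ref{discretization procedure} with $F=\Pi_{2}$ and then recover $N_{\MP}(x) = Ax + O_{\eps}(x^{1/2+\eps})$ by Perron inversion, exactly as in Subsection \ref{discrete approximation}. The only input required is an $L^{1}$-bound for $\zeta_{2}$ on vertical lines analogous to \eqref{L1 bound zetac}, which follows from the same pointwise analysis as in Subsection \ref{bounding zetac} (in fact more easily, since only one pair of nearby zeros occurs at each height scale $\gamma_{k}$). Assertion (2) follows from the corresponding analysis in Subsection \ref{location zeros}: the $\rho_{k,0}$ lie exactly on the contour $\sigma = 1-(\log\abs{t})^{-\alpha}$ by construction, and all remaining zeros produced by the factors $G(a_{k}(s-\rho_{k,m}))$ lie strictly to the left of this contour, by the regular variation of $f$ combined with \eqref{zeros G}.

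The main work is assertion (3). Following the argument of Subsection \ref{asymptotics psic}, together with Lemmas \ref{estimate Ik} and \ref{technical}, the double sum defining $\psi_{2}(x)-x$ collapses, up to admissible errors of size $x\exp(-\omega(x)-\mu u_{0}(x))$, to the single $k=k_{x}$ block
\[
	-\frac{2x^{\beta_{k_{x}}}}{\gamma_{k_{x}}}\bigl(\sin(\gamma_{k_{x},0}\log x) + \sin(\gamma_{k_{x},1}\log x)\bigr).
\]

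The crux, and what I expect to be the main obstacle, is the balancing of destructive interference against the distance of $x$ from the pivot point $x_{k_{x}}$. Writing $x = (x_{k_{x}})^{1+\nu}$, the range of $\nu$ is automatically bounded by the definition of $k_{x}$. When $\abs{\nu}$ is bounded away from $0$, a direct application of Lemma \ref{technical} yields a saving larger than the target $(\log x)^{-1/(2(1+\alpha))}$. When $\abs{\nu}$ is small, the phase shift $\pi/\log x_{k_{x}}$ built into \eqref{gammak1} makes the two sines exactly out of phase at $\nu=0$, so that by elementary trigonometry the sum of sines is $O(\abs{\nu})$; simultaneously, a Taylor expansion of $h(\log\gamma_{k_{x}}, x)$ about its minimum, using the explicit form $f(u) = u^{-\alpha}$ via \eqref{u0x}, yields $x^{\beta_{k_{x}}}/\gamma_{k_{x}} \ll x\exp(-\omega(x) - c\nu^{2}u_{0}(x_{k_{x}}))$ for some $c = c(\alpha)>0$. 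Maximizing $\abs{\nu}\,\e^{-c\nu^{2}u_{0}(x_{k_{x}})}$ in $\nu$ at $\abs{\nu}\asymp u_{0}(x_{k_{x}})^{-1/2}$ then produces the savings factor $u_{0}(x_{k_{x}})^{-1/2} \asymp (\log x)^{-1/(2(1+\alpha))}$, since $u_{0}(x) \asymp (\log x)^{1/(1+\alpha)}$ when $f(u) = u^{-\alpha}$.
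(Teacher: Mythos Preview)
Your proposal is correct and follows essentially the same approach as the paper: reduce to the single $k=k_{x}$ block via Lemmas \ref{estimate Ik} and \ref{technical}, write $x=(x_{k_{x}})^{1+\nu}$, handle large $\abs{\nu}$ by Lemma \ref{technical}, and for small $\abs{\nu}$ combine the trigonometric cancellation $\sin(\gamma_{k,0}\log x)+\sin(\gamma_{k,1}\log x)=O(\abs{\nu})$ with the quadratic Taylor expansion $x^{\beta_{k}}/\gamma_{k}\le x\exp\bigl(-\omega(x)-c\nu^{2}u_{0}(x_{k})\bigr)$, then optimise in $\nu$. Assertions (1) and (2) are handled exactly as you indicate, by repeating Subsections \ref{bounding zetac}, \ref{location zeros}, and \ref{discrete approximation}.
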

In particular, in Theorem \ref{Revesz2} $\eps$ cannot be taken to be zero.


\section{Some remarks on zero-distribution and growth of Beurling zeta functions}
\label{sec: zero-distribution and growth}
Finally we discuss some connections between zero-free regions on the one hand, and growth, zero-density estimates, and the size of zero clusters of the Beurling zeta function on the other hand.

\subsection{Landau's method for deducing zero-free regions}
Currently, the best technology available for producing zero-free regions is the century-old method of Landau \cite{La24}. Given an upper bound for $\abs{\zeta}$ near the $1$-line, one uses the Borel--Carath\'eodory lemma to upper bound $-\Re \frac{\zeta'}{\zeta}$ in terms of the $\abs{\zeta}$-bound and the real part of a hypothetical nearby zero. Juxtaposing this with the familiar ``3-4-1-inequality'' yields an upper bound for the real part of the hypothetical zero, i.e.\ a zero-free region.
\begin{theorem}[Hauptsatz of \cite{La24}]
\label{Hauptsatz}
Let $\vphi(t)$ and $\psi(t)$ be positive non-decreasing functions for $t \ge t_{0}$ satisfying $\vphi(t) \to \infty$, $\psi(t) \ge 1$, and 
\begin{equation}
\label{condition phi psi}
	\psi(t) \ll \e^{\vphi(t)}.
\end{equation}
Let $\dif\mathfrak{m}$ be a positive measure on $[1, \infty)$ for which $\int_{1^{-}}^{\infty}x^{-\sigma}\dif \mathfrak{m}(x)$ converges for $\sigma>1$. Suppose $F(s) = \exp\int_{1^{-}}^{\infty}x^{-s}\dif\mathfrak{m}(x)$ has analytic continuation to $\sigma \ge 1-1/\psi(\abs{t})$, with the exception of a simple pole at $s=1$, and suppose
\begin{equation}
\label{Landau bound}
	F(s) \ll \e^{\vphi(t)} \quad \text{for} \quad \sigma \ge 1-\frac{1}{\psi(t)}, \quad t\ge t_{0}.
\end{equation}
Then there is a constant $c>0$ so that
\begin{equation}
\label{Landau zero-free}
	F(s) \neq 0 \quad \text{for} \quad \sigma \ge 1-\frac{c}{\psi(2t+1)\vphi(2t+1)}, \quad t\ge t_{0}.
\end{equation}
\end{theorem}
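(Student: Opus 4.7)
The proof follows the classical Landau method, built around the ``$3$-$4$-$1$'' trigonometric inequality combined with a Borel--Carath\'eodory estimate for $-F'/F$. First I would exploit the positivity of the coefficients: since $\log F(s) = \int_{1^{-}}^{\infty} x^{-s}\dif\mathfrak{m}(x)$ with $\dif\mathfrak{m}\ge0$, one has $-F'/F(s) = \int_{1^{-}}^{\infty}(\log x)x^{-s}\dif\mathfrak{m}(x)$. Applying the identity $3+4\cos\theta+\cos(2\theta) = 2(1+\cos\theta)^{2}\ge 0$ pointwise with $\theta = t\log x$ gives
\begin{equation*}
    -3\frac{F'}{F}(\sigma) - 4\Re\frac{F'}{F}(\sigma+\I t) - \Re\frac{F'}{F}(\sigma+2\I t) \ge 0, \quad \sigma>1,\ t\in\R.
\end{equation*}
The first term is handled by the simple pole: $-F'/F(\sigma)\le (\sigma-1)^{-1}+O(1)$ for $1<\sigma\le 2$.

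Next I would argue by contradiction: suppose $F(\rho)=0$ for some $\rho = \beta+\I\gamma$ with $\gamma=t\ge t_{0}$ and $1-\beta$ smaller than the claimed radius. Set $\sigma_{0} = 1 + \delta$ for $\delta>0$ to be chosen at the end, with the key scale being $\delta \asymp 1/\psi(2t+1)$. Consider disks centred at $\sigma_{0}+\I t$ and $\sigma_{0}+2\I t$ of radius comparable to $\delta+1/\psi(2t+1)$; by the monotonicity of $\psi$ these lie in the region of analyticity $\sigma \ge 1-1/\psi(2t+1)$, and on them $|F(s)|\ll \e^{\vphi(2t+1)}$ by \eqref{Landau bound}. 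Moreover, at the centres we have the lower bound
\begin{equation*}
    \log\abs{F(\sigma_{0}+\I t')} \le \Re\log F(\sigma_{0}+\I t') \le \log F(\sigma_{0}) = O\bigl(\log(1/\delta)\bigr) = O\bigl(\vphi(2t+1)\bigr),
\end{equation*}
where the last step invokes the standing assumption \eqref{condition phi psi} $\psi\ll \e^{\vphi}$, which is \emph{precisely} what converts $\log\psi$ into an admissible $O(\vphi)$ term.

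With this control I would apply the Borel--Carath\'eodory inequality to $\log[F(s)(s-1)]$ inside the two disks to expand $F'/F$ as a sum over zeros plus an $O(\vphi(2t+1))$ error. At $\sigma_{0}+\I t$ the zero $\rho$ lies in the disk and contributes $1/(\sigma_{0}-\beta)$ to $F'/F$; all other zeros $\rho'$ contribute terms $1/(\sigma_{0}+\I t-\rho')$ whose real parts are $>0$ and hence strengthen the resulting upper bound for $-\Re F'/F(\sigma_{0}+\I t)$. At $\sigma_{0}+2\I t$ one simply discards the (favourable) zero contributions. This yields
\begin{align*}
    -\Re\frac{F'}{F}(\sigma_{0}+\I t) &\le -\frac{1}{\sigma_{0}-\beta} + C_{1}\vphi(2t+1), \\
    -\Re\frac{F'}{F}(\sigma_{0}+2\I t) &\le C_{2}\vphi(2t+1).
\end{align*}

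Substituting into the $3$-$4$-$1$ inequality gives
\begin{equation*}
    \frac{4}{\sigma_{0}-\beta} \le \frac{3}{\delta} + C_{3}\vphi(2t+1) + O(1).
\end{equation*}
Choosing $\delta$ of order $(\psi(2t+1)\vphi(2t+1))^{-1}$ balances the two right-hand terms and forces $\sigma_{0}-\beta \ge c'/(\psi(2t+1)\vphi(2t+1))$; since $1-\beta = (\sigma_{0}-\beta)-\delta$, this yields \eqref{Landau zero-free} with some $c>0$. The main technical obstacle is the Borel--Carath\'eodory step: one must simultaneously verify that the disks fit inside the analyticity region (using the monotonicity of $\psi$), that the relevant sub-disk used for the derivative bound is large enough so that the $(R-r)^{-1}$ factor is absorbed into constants, and that the lower bound $\log\abs{F(\sigma_{0}+\I t')} \gg -\vphi(2t+1)$ is indeed of the order $\vphi$ rather than $\log(1/\delta)$, which is exactly where hypothesis \eqref{condition phi psi} enters.
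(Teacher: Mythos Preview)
The paper does not give a proof of this theorem; it is quoted from Landau \cite{La24}, with only a one-line sketch in the paragraph preceding the statement (Borel--Carath\'eodory to bound $-\Re F'/F$, then the $3$-$4$-$1$ inequality) and some remarks afterwards. Your proposal follows exactly this route, so there is nothing to compare against beyond that sketch.

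There is one quantitative slip worth correcting. In the Borel--Carath\'eodory step you work on a disk of radius $R\asymp 1/\psi(2t+1)$ (so that it stays inside $\sigma\ge 1-1/\psi$), and the relevant Landau lemma produces an error of size $R^{-1}\log M$, not $\log M$. With $\log M\ll\vphi(2t+1)$ this gives
\[
    -\Re\frac{F'}{F}(\sigma_{0}+\I t)\;\le\;-\frac{1}{\sigma_{0}-\beta}+C_{1}\,\psi(2t+1)\vphi(2t+1),
\]
not $+\,C_{1}\vphi(2t+1)$ as you wrote. This is in fact consistent with your final balancing choice $\delta\asymp(\psi\vphi)^{-1}$, which would \emph{not} balance against a bare $C_{3}\vphi$; it also clarifies your earlier aside that ``the key scale is $\delta\asymp 1/\psi$'', which is really the scale of the disk radius $R$, not of $\delta=\sigma_{0}-1$. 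Once the $\psi$ factor is restored, the $3$-$4$-$1$ inequality reads $4/(\sigma_{0}-\beta)\le 3/\delta+C_{3}\psi\vphi+O(1)$, and taking $\delta=c_{0}/(\psi\vphi)$ with $c_{0}$ small (so that $4/(3/c_{0}+C_{3})>c_{0}$) yields $1-\beta\ge c/(\psi\vphi)$ as required. A minor further point: in your ``lower bound'' display you have $\log\abs{F}\le\Re\log F$, but these are equal; what you actually need is that positivity of $\dif\mathfrak m$ gives $\bigl|\log\abs{F(\sigma_{0}+\I t')}\bigr|\le\log F(\sigma_{0})\ll\log(1/\delta)\ll\vphi$ via \eqref{condition phi psi}, and it is the resulting \emph{lower} bound $\log\abs{F(\sigma_{0}+\I t')}\ge -O(\vphi)$ that enters $\log M$.
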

\begin{remark}
\begin{itemize}
	\item Landau formulated his Hauptsatz for functions $F$ of the form $F(s) = \exp\sum_{n=1}^{\infty}a_{n}n^{-s}$ with $a_{n}\ge 1$, but the proof readily applies to the more general case. Also he considered the point $3t$ instead of $2t+1$. Typically $\vphi(t)$ and $\psi(t)$ are slowly varying, so the specific constant in front of $t$ is not so important.
	\item The standard works of Titchmarsh and Ivi\'c also contain this result for the Riemann zeta function specifically (see \cite[Theorem 3.10]{Ti86} and \cite[Lemma 6.8]{Iv03} respectively), but interestingly, they both require that $\vphi(t)\psi(t) = o(\e^{\vphi(t)})$, instead of Landau's slightly more general condition \eqref{condition phi psi} which is sufficient.
	\item The proof hinges on a clever use of a nonnegative trigonometric polynomial such as $3+4\cos t+\cos(2t)$. Landau investigated the use of other polynomials. Setting
	\[
		V \coloneqq \inf\biggl\{\frac{a_{1} + \dotsb + a_{n}}{(\sqrt{a_{1}}-\sqrt{a_{0}})^{2}} : a_{k}\ge 0, \quad a_{1}>a_{0}, \quad a_{0} + a_{1}\cos t + \dotsb + a_{n}\cos(nt) \ge 0\biggr\},
	\]
	he showed\footnote{This work predates his discovery using the ``local method'' \cite{La24} and employs the Hadamard factorization of $\zeta$ to obtain
	\[
		-\Re \frac{\zeta'(s)}{\zeta(s)} \le \frac{1}{2}\log t - \Re \sum_{\rho}\biggl(\frac{1}{\rho} + \frac{1}{s-\rho}\biggr) + O(1),
	\] yielding the better constant $2/V$ instead of $1/(8V)$ in the de la Vall\'ee Poussin zero-free region.} \cite[\S 79]{La09} for the Riemann zeta function that it has no zeros for $\sigma > 1 - \frac{2/V-\eps}{\log t}$, $t\ge t_{0}(\eps)$, for every $\eps>0$. Determining the value of $V$ is one of Landau's extremal problems, a historical survey of which may be found in \cite{Re07}. It is known \cite[Theorem 2]{MT15} that $34.468305 < V < 34.4889920009$.
	
	 Tracking the best constants in Landau's proof of his Hauptsatz, one may see that any $c < 1/(8V)$ in \eqref{Landau zero-free} is admissible.
\end{itemize}
\end{remark}
In this context it is interesting to consider the zeta function $\zeta_{1}(s)$. Setting $\frac{1}{\psi(t)} = a(t)f(\log t)$ for some function $a(t)$, we see from \eqref{bound zeta1} that with $\vphi(t) = (2+\delta)a(t)$,
\[
	\zeta_{1}(\sigma+\I t) \ll \e^{\vphi(t)}, \quad \text{for} \quad \sigma \ge 1 - \frac{1}{\psi(t)}, \quad t\ge t_{0}.
\]
Landau's theorem then yields that $\zeta_{1}(s)$ has no zeros for $\sigma \ge 1-(c/(2+\delta)+o(1))f(\log t)$, provided that $\psi(t) \ll \e^{\vphi(t)}$ and the monotonicity assumptions for $\psi$ and $\vphi$ are fulfilled. This is the case when setting for example $a(t) = \log\frac{1}{f(\log t)}$. Of course, by construction the ``true'' zero-free region of $\zeta_{1}(s)$ is $\sigma = 1 - f(\log t)$. Hence, at least in the class of Beurling zeta functions of Beurling number systems \emph{in the extended sense}, Landau's Theorem \ref{Hauptsatz} is sharp, up to the value of the constant $c$ in \eqref{Landau zero-free}. More precisely, we can summarize the above discussion as follows.
\begin{proposition}
\label{sharpness Landau}
Let $f$ be an eventually $C^{1}$, decreasing, and strictly convex function satisfying $1/u = o(f(u))$ as $u\to\infty$. Suppose further that $f$ is regularly varying of index $-\alpha$, $0<\alpha\le 1$, or that it is slowly varying and satisfies \eqref{decrease |f'|}. Then there exists a Beurling zeta function $\zeta_{\MB}(s)$ of a Beurling number system in the extended sense $\MB$ which has infinitely many zeros in the region $\sigma \ge 1- f(\log t)$, and for which there exist non-decreasing functions $\vphi(t) \to \infty$, $\psi(t) \ge 1$ satisfying \eqref{condition phi psi} and \eqref{Landau bound} (with $F(s) = \zeta_{\MB}(s)$), and for which
\[
	\frac{1}{\psi(2t+1)\vphi(2t+1)} \ge \frac{f(\log t)}{2+o(1)}.
\] 
\end{proposition}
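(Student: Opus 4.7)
The plan is to verify that the zeta function $\zeta_{1}(s)$ constructed in Variant~1 already does the job, with $\psi$ and $\varphi$ essentially as indicated in the discussion preceding the proposition. By construction, $\zeta_{\MB}(s) \coloneqq \zeta_{1}(s)$ is the Beurling zeta function of the extended system $(\Pi_{1}, N_{1})$, and the zero-location analysis (done exactly as in Subsection~\ref{location zeros}, noting that the zeros at $\rho_{k} = \beta_{k} + \I\gamma_{k}$ with $\beta_{k} = 1 - f(\log \gamma_{k})$ lie on the contour $\sigma = 1 - f(\log\abs{t})$) gives infinitely many zeros of $\zeta_{1}$ in the region $\sigma \ge 1 - f(\log t)$. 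It remains to exhibit $\psi$ and $\varphi$.

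Set $a(t) \coloneqq \log\bigl(1/f(\log t)\bigr)$ and take
\[
    \frac{1}{\psi(t)} \coloneqq a(t)\, f(\log t), \qquad \varphi(t) \coloneqq \bigl(2+\eps(t)\bigr) a(t),
\]
where $\eps(t) \searrow 0$ is a slowly decaying positive function to be fixed in a moment (e.g.\ $\eps(t) = a(t)^{-1/2}$). Since $f$ is eventually decreasing and positive with $f \to 0$, the function $a(t)$ is eventually increasing and tends to $\infty$; hence $\psi(t) = \e^{a(t)}/a(t)$ and $\varphi(t)$ are eventually increasing and tend to $\infty$, and we may redefine them to be constants on an initial segment so as to ensure $\psi \ge 1$, both non-decreasing on $[1, \infty)$. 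The condition \eqref{condition phi psi} is immediate: $\psi(t) = \e^{a(t)}/a(t) \ll \e^{(2+\eps(t))a(t)} = \e^{\varphi(t)}$.

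For \eqref{Landau bound}, suppose $\sigma \ge 1 - 1/\psi(t) = 1 - a(t)f(\log t)$. The pointwise bound \eqref{bound zeta1} gives
\[
    \zeta_{1}(\sigma+\I t) \ll 1 + \exp\!\Bigl(\tfrac{2(1+o(1))(1-\sigma)}{f(\log t)}\Bigr) \ll 1 + \exp\bigl((2+o(1))\,a(t)\bigr),
\]
and choosing $\eps(t)$ to decay slowly enough to dominate this internal $o(1)$ (which depends only on $f$, not on $\sigma$), the right-hand side is $\ll \e^{\varphi(t)}$. This also ensures $\varphi$ is increasing for large $t$. Finally, using that $f$ is regularly or slowly varying with uniform convergence on compact $\lambda$-intervals, $f(\log(2t+1))/f(\log t) \to 1$ since $\log(2t+1)/\log t \to 1$. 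Therefore
\[
    \psi(2t+1)\varphi(2t+1)\, f(\log t) = \bigl(2+\eps(2t+1)\bigr)\frac{f(\log t)}{f(\log(2t+1))} \longrightarrow 2,
\]
which rearranges to $1/\bigl(\psi(2t+1)\varphi(2t+1)\bigr) \ge f(\log t)/(2+o(1))$, as required.

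There is no substantial obstacle here; the only minor technicality is the simultaneous choice of $\eps(t)$ guaranteeing monotonicity of $\varphi$ and absorption of the $o(1)$ coming from \eqref{bound zeta1}, both of which are handled by any $\eps(t)$ tending to $0$ slowly enough (for instance $\eps(t) = a(t)^{-1/2}$).
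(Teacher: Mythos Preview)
Your proof is correct and follows essentially the same approach as the paper: the proposition is really a summary of the discussion immediately preceding it, which already identifies $\zeta_{\MB}=\zeta_{1}$, the choice $a(t)=\log\frac{1}{f(\log t)}$, and $\varphi(t)=(2+\delta)a(t)$; you simply flesh out the verification of \eqref{condition phi psi}, \eqref{Landau bound}, and the final inequality. One small caveat: the concrete suggestion $\eps(t)=a(t)^{-1/2}$ need not dominate the unspecified $o(1)$ from \eqref{bound zeta1}, so it should not be offered as a definitive example---but since you explicitly allow $\eps(t)$ to be chosen slowly enough after the fact (equivalently, one may take $\varphi(t)$ to be the running maximum of $(2+2o_{1}(t))a(t)$, which is non-decreasing and still $(2+o(1))a(t)$), this does not affect the argument.
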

Setting
\[
	\mathfrak{c} \coloneqq \sup\{c>0: c \text{ is admissible in \eqref{Landau zero-free}}\},
\]
we have $1/(8V) \le \mathfrak{c} \le 2$. Here $1/(8V) \approx 0.0036$. Based on the (loosely formulated) believe that the Diamond--Montgomery--Vorhauer-style examples are ``sharp up to a factor $1/2$'' (e.g.\ $\zeta_{1}(s)$ has analytic continuation to $\Re s >1/2$, not to $\Re s>0$), one may conjecture that $\mathfrak{c} = 1$.

\medskip

Finally we mention that we cannot show sharpness of Theorem \ref{Hauptsatz} in the smaller class of Beurling zeta functions of \emph{discrete} Beurling number systems, as the discretization procedure \ref{discretization procedure} introduces an extra factor of size $\exp\bigl(O(\sqrt{\log t})\bigr)$, which makes the above argument break down in some cases: to be able to use the same function $\vphi$, we would require $a(t) \gg \sqrt{\log t}$, but in this case $\psi(t) \ll (\log t)^{\alpha-1/2+o(1)}$, so that $\psi(t)$ cannot be a non-decreasing function $\ge 1$ if $\alpha < 1/2$.


\subsection{Zero-density and clustering}
For the Riemann zeta function, the Vinogradov--Korobov type estimate 
\begin{equation}
\label{Vinogradov--Korobov estimate}
	\zeta(s) \ll \abs{t}^{a(1-\sigma)^{3/2}}(\log\abs{t})^{2/3}
\end{equation}
yields\footnote{Vinogradov \cite{Vi58} claimed that $\zeta$ has no zeros for $\sigma > 1- c/(\log\abs{t})^{2/3}$ and seemed to imply this zero-free region follows from the estimate $\zeta(1+\I t) \ll (\log \abs{t})^{2/3}$ on the $1$-line alone. In view of our work, it is unlikely that Vinogradov had proofs for these claims. Indeed, for Beurling zeta functions, the bound \eqref{Vinogradov--Korobov estimate} cannot yield anything better than \eqref{Riemann zero-free} (up to the value of $c$), and moreover, bounds on the $1$-line alone do not produce any zero-free region at all, as $\zeta_{\mc}(1+\I t)$ is bounded for any choice of $f$ (see \eqref{bound zetac 1}-\eqref{bound zetac 2}).}
 a zero-free region of the form \eqref{Riemann zero-free}, as well as the zero-density estimate \eqref{Riemann zero-density}, which improves the Carlson-type zero density estimate $N(\sigma, T) \ll T^{A(1-\sigma)}(\log T)^{C}$ in the vicinity of the $1$-line. One can wonder whether a zero-free region $\sigma > 1 - f(\log t)$ with $f(u) \succ 1/u$ in itself already implies an improvement over the Carlson-type zero density estimate (apart from the trivial observation that $N\bigl(1-f(\log T)+\eps, T\bigr) = 0$). The Beurling zeta function from Section \ref{Proof of sharpness theorem} shows that this is not the case. Indeed, letting $b > 1$, it follows from the analysis in Subsection \ref{location zeros} that 
\[
	N\bigl(1-bf(\log T), T) = \Omega\Bigl( T^{(2b-1-o(1))f(\log T)}\Bigr),
\]
so that, apart from the values of the constants $A$ and $C$, no improvement of the Carlson-type estimate is possible.

\medskip

In \cite[Chapter 11]{Mo71}, Montgomery showed that the existence of a zero $\rho_{0} = \beta_{0}+\I\gamma_{0}$ of the Riemann zeta function with $\beta_{0}$ close to $1$ implies the existence of a cluster of zeros in the vicinity of $1+\I\gamma_{0}$ or $1+2\I\gamma_{0}$, with the size of this cluster bounded from below in terms of $(1-\beta_{0})^{-1}$. Later, a clustering result for Beurling zeta functions satisfying \eqref{theta-well-behaved} was worked out by him and Diamond and Vorhauer \cite[Theorem 2]{DMV}. Here, we focus on a recent stronger clustering result due to R\'ev\'esz \cite[Theorem 3]{Re22}. This result states a lower bound for a weighted count of the nearby zeros. For ease of reading, we present a corollary (which can be deduced from the theorem by integration by parts) dealing with the unweighted count directly (see also \cite[equation (11.20)]{Mo71}). For $T\ge0$ and $r>0$, we let
\[
	n(T,r) \coloneqq \#\bigl\{\rho: \abs{1+\I T-\rho} \le r \text{ and } \zeta_{\MP}(\rho)=0\bigr\} \quad \text{(counted with multiplicity)}.
\]
\begin{corollary}[R\'ev\'esz]
Let $\zeta_{\MP}(s)$ be the Beurling zeta function of a number system $(\MP, \MN)$ satisfying \eqref{theta-well-behaved}. Suppose $\zeta_{\MP}(\rho_{0})=0$, where $\rho_{0} = \beta_{0}+\I\gamma_{0}$ with $\gamma_{0}$ sufficiently large and $1-\beta_{0} < \frac{1-\theta}{40\log_{2}\gamma_{0}}$. There exist positive absolute constants $C_{1}$ and $C_{2}$ such that for every $\delta>0$ and every $\kappa\ge1$, either $1-\beta_{0} > C_{1}\delta$, or there exists an $r\in [\delta, 2(1-\theta)/3]$ for which
\begin{equation}
\label{clustering}
	n(\gamma_{0}, r) + n(2\gamma_{0}, r) \ge \frac{C_{2}}{\Gamma(\kappa+1)}\cdot \frac{r^{\kappa}}{\delta^{\kappa-1}(1-\beta_{0})}.
\end{equation}
\end{corollary}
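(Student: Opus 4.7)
The plan is to deduce this unweighted count bound from a weighted version — specifically R\'ev\'esz's Theorem 3 in \cite{Re22} — by an integration-by-parts argument in the Stieltjes integral against $\dif n$, following the classical scheme sketched in \cite[equation (11.20)]{Mo71}. R\'ev\'esz's theorem, applied under the hypothesis $1-\beta_{0} \le C_{1}\delta$ (the contrapositive of the first alternative in the corollary), produces a nonnegative $C^{1}$ weight function $w=w_{\delta,\kappa}$, supported in $[\delta, R]$ with $R \coloneqq 2(1-\theta)/3$ and vanishing at the endpoints, together with a lower bound of the shape
\begin{equation*}
    \int_{\delta}^{R} w(r) \dif\bigl(n(\gamma_{0}, r) + n(2\gamma_{0}, r)\bigr) \ge \frac{A}{1-\beta_{0}},
\end{equation*}
with $A$ an absolute positive constant. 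The weight $w$ is calibrated so that the moment of $-w'(r)$ against $r^{\kappa}$ is a constant multiple of $\delta^{\kappa-1}/\Gamma(\kappa+1)$, matching the denominators in the target bound.

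Next I would argue by contradiction. Assume in addition to $1-\beta_{0}\le C_{1}\delta$ that, contrary to the conclusion,
\begin{equation*}
    n(\gamma_{0}, r) + n(2\gamma_{0}, r) < \frac{C_{2}}{\Gamma(\kappa+1)} \cdot \frac{r^{\kappa}}{\delta^{\kappa-1}(1-\beta_{0})}
\end{equation*}
holds for \emph{every} $r \in [\delta, R]$. Integration by parts, with the boundary terms vanishing because $w(\delta)=w(R)=0$, converts the weighted lower bound into
\begin{equation*}
    \int_{\delta}^{R}\bigl(n(\gamma_{0},r)+n(2\gamma_{0},r)\bigr)\bigl(-w'(r)\bigr) \dif r \ge \frac{A}{1-\beta_{0}}.
\end{equation*}
Substituting the hypothetical upper bound on the counting function cancels the factor $1-\beta_{0}$ on both sides and reduces matters to
\begin{equation*}
    \frac{C_{2}}{\Gamma(\kappa+1)\,\delta^{\kappa-1}}\int_{\delta}^{R} r^{\kappa}\bigl(-w'(r)\bigr)\dif r \ge A.
\end{equation*}
The remaining integral is an explicit moment of $-w'$, of size a constant times $\delta^{\kappa-1}/\Gamma(\kappa+1)$ once the weight is pinned down, so that choosing $C_{2}$ sufficiently small relative to $A$ yields the desired contradiction.

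The main obstacle, I expect, is identifying the precise weight $w_{\delta,\kappa}$ featured in R\'ev\'esz's theorem and verifying that its moment structure is compatible with the clean $r^{\kappa}/(\delta^{\kappa-1}\Gamma(\kappa+1))$ scaling of the target inequality, uniformly in the free parameter $\kappa \ge 1$ — the gamma factor should emerge naturally from an integration like $\int_\delta^R r^\kappa(r-\delta)^{\kappa-1}\dif r$ or a similar Beta-type integral, but this must be checked carefully. A secondary, largely bureaucratic point is the alignment of hypotheses: confirming that R\'ev\'esz's theorem applies whenever $\gamma_{0}$ is large and $1-\beta_{0} < (1-\theta)/(40\log_{2}\gamma_{0})$, and fixing $C_{1}$ so that the first alternative $1-\beta_{0}>C_{1}\delta$ correctly absorbs the degenerate regime in which the weighted estimate would otherwise be vacuous.
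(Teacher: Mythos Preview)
Your approach matches the paper's: the corollary is stated there with only the parenthetical hint that it ``can be deduced from the theorem by integration by parts'' together with the reference to \cite[equation (11.20)]{Mo71}, and you propose exactly this.

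There is, however, a genuine gap in your execution. You assume the weight $w$ vanishes at \emph{both} endpoints $\delta$ and $R$, so that the boundary terms in the integration by parts disappear. But if $w\ge0$ with $w(\delta)=w(R)=0$, then $-w'$ cannot keep a constant sign on $[\delta,R]$, and your substitution step --- replacing $n(\gamma_{0},r)+n(2\gamma_{0},r)$ by the hypothetical upper bound inside $\int(-w')\,(n(\gamma_{0},r)+n(2\gamma_{0},r))\dif r$ --- is then invalid as an inequality. The standard weights in these clustering results (as in Montgomery's treatment and in R\'ev\'esz's theorem) are \emph{decreasing} on the relevant interval, with $w(R)=0$ but $w$ positive near the left endpoint; the lower boundary term vanishes instead because the counting function itself is zero there (one integrates from $r=0$, where $n(T,0)=0$). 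With $-w'\ge0$ the substitution is legitimate, and the moment $\int r^{\kappa}(-w'(r))\dif r$ then produces the $\Gamma(\kappa+1)$ factor you anticipate. So the strategy is right, but you should revisit the shape of $w$ and how the boundary terms are actually disposed of.
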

It is worth pointing out that such clustering results can also be used to obtain zero-free regions from upper bounds on $\zeta_{\MP}(s)$ by Jensen's inequality, see the discussion in \cite[Chapter 11]{Mo71}. Nonetheless, Landau's method is more direct and (seemingly) more potent for this purpose.

Again, the examples in the style of Diamond, Montgomery, and Vorhauer show that this clustering result is sharp. Indeed, consider $\zeta_{1}(s)$ \eqref{zeta1} (or a discrete random approximation of it). It has a zero $\rho_{k} = \beta_{k}+\I\gamma_{k}$ with $1-\beta_{k} = 1/\ell_{k}$. On the other hand, using \eqref{zeros G} one readily sees that
\[
	n(\gamma_{k}, r) \ll \ell_{k} r, \quad n(2\gamma_{k}, r) = 0.
\]
Comparing with \eqref{clustering}, we infer that for $\delta \ge 1/(C_{1}\ell_{k})$ and $r=r(\delta, \kappa)$, 
\[
	\frac{C_{2}}{\Gamma(\kappa+1)}\cdot \frac{r^{\kappa}\ell_{k}}{\delta^{\kappa-1}} \le  n(\gamma_{k}, r) + n(2\gamma_{k}, r) \ll \ell_{k} r,
\]
showing that $C_{2}$ cannot be chosen arbitrarily large in \eqref{clustering}. Furthermore, in this example one necessarily has $r \ll_{\kappa} \delta$.

\section*{Acknowledgements} The idea for this article arose during the Focused Workshop on Beurling Number Systems, which took place at the end of July 2024 at the Alfr\'ed R\'enyi Institute of Mathematics, Budapest, Hungary, with the support of the Erd\H{o}s Center. The author would like to thank the other participants Gregory Debruyne, Titus Hilberdink, J\'anos Pintz, Szil\'ard R\'ev\'esz, Imre Ruzsa, and Jasson Vindas for the many interesting discussions.

\end{document}